\PassOptionsToPackage{dvipsnames}{xcolor}
\documentclass{amsart}

\usepackage{relsize}
\usepackage{amsmath}
\usepackage{amssymb}
\usepackage{amsthm}
\usepackage{mathrsfs}
\usepackage{amsfonts}
\usepackage{float}
\usepackage{paralist}
\usepackage{mathtools}
\usepackage{pgf,tikz}
\usepackage{pgfplots}
\usepgfplotslibrary{polar}
\usepackage{tikz-3dplot}
\usepackage{xparse}
\usepackage{rotating}
\usepackage{thmtools}
\usepackage{euscript}
\usepackage{makecell}
\usepackage{enumitem}
\usepackage[dvipsnames]{xcolor}

\definecolor{cite}{HTML}{11871E}
\definecolor{url}{HTML}{698996}
\definecolor{link}{HTML}{912F1B}
\usepackage[pdfencoding=unicode, colorlinks=true, linkcolor=link, citecolor=cite, urlcolor=url, linktocpage]{hyperref}
\usepackage[capitalise]{cleveref}
\crefformat{equation}{(#2#1#3)}
\usepackage[hyphenbreaks]{breakurl}
\usepackage{xurl}
\hypersetup{breaklinks=true}
\hypersetup{final}

\usepackage{lmodern}

\usepackage[T1]{fontenc}
\usepackage{biolinum}

\renewcommand{\mathsf}[1]{\text{\normalfont\sffamily#1}}

\usepackage[
  letterpaper,
  twoside=true,
  textheight=22cm,
  textwidth=15cm,
  marginparsep=0.75cm,
  marginparwidth=2.5cm,
  heightrounded,
  centering
]{geometry}

\usepackage{tikz-cd}
\usepackage{tkz-euclide}
\usepackage{wasysym}
\usepackage{tqft}
\usetikzlibrary{matrix}

\tikzset{toprule/.style={%
        execute at end cell={%
            \draw [line cap=rect,#1] (\tikzmatrixname-\the\pgfmatrixcurrentrow-\the\pgfmatrixcurrentcolumn.north west) -- (\tikzmatrixname-\the\pgfmatrixcurrentrow-\the\pgfmatrixcurrentcolumn.north east);%
        }
    },
    bottomrule/.style={%
        execute at end cell={%
            \draw [line cap=rect,#1] (\tikzmatrixname-\the\pgfmatrixcurrentrow-\the\pgfmatrixcurrentcolumn.south west) -- (\tikzmatrixname-\the\pgfmatrixcurrentrow-\the\pgfmatrixcurrentcolumn.south east);%
        }
    }
}

\usetikzlibrary{arrows}
\usetikzlibrary[patterns]
\usetikzlibrary{calc}
\usetikzlibrary{knots}
\usetikzlibrary{shapes.misc}

\usetikzlibrary{matrix}
\usetikzlibrary{arrows}
\usetikzlibrary[patterns]
\usetikzlibrary{decorations.pathreplacing}
\usetikzlibrary{decorations.pathmorphing}
\usetikzlibrary{decorations.text}
\usetikzlibrary{decorations.markings}

\makeatletter
\newcommand{\fixed@sra}{$\vrule height 2\fontdimen22\textfont2 width 0pt\rightarrow$}
\newcommand{\shortarrowup}[1]{%
  \mathrel{\text{\rotatebox[origin=c]{65}{\fixed@sra}}}
}
\newcommand{\shortarrowdown}[1]{%
  \mathrel{\text{\rotatebox[origin=c]{250}{\fixed@sra}}}
}
\newtheorem{thm}[subsubsection]{Theorem}
\newtheorem{lem}[subsubsection]{Lemma}
\newtheorem{prop}[subsubsection]{Proposition}
\newtheorem{cor}[subsubsection]{Corollary}

\theoremstyle{definition}

\newtheorem{rex}[subsubsection]{Running Example}

\newtheorem{rmk}[subsubsection]{Remark}

\newtheorem{defn}[subsubsection]{Definition}
\newtheorem{rem}[subsubsection]{Remark}

\newtheorem{const}[subsubsection]{Construction}
\newtheorem{ntt}[subsubsection]{Notation}

\theoremstyle{theorem}
\newtheorem*{thm*}{Theorem}
\newtheorem*{lem*}{Lemma}
\newtheorem*{prop*}{Proposition}
\newtheorem*{cor*}{Corollary}
\newtheorem*{thmop*}{Theorem\textsuperscript{op}}

\theoremstyle{definition}
\newtheorem*{ex*}{Example}
\newtheorem*{exs*}{Examples}
\newtheorem*{nex*}{Non-example}
\newtheorem*{nexs*}{Non-examples}
\newtheorem*{rex*}{Running Example}
\newtheorem*{example*}{Example}
\newtheorem*{claim*}{Claim}
\newtheorem*{rmk*}{Remark}
\newtheorem*{fact*}{Fact}
\newtheorem*{defn*}{Definition}
\newtheorem*{rem*}{Remark}
\newtheorem*{exer*}{Exercise}
\newtheorem*{prob*}{Problem}
\newtheorem*{rec*}{Recall}
\newtheorem*{app*}{Application}
\newtheorem*{const*}{Construction}

\def\on{\operatorname}
\def\op{{\on{op}}}

\def\scr{\EuScript}

\def\id{{\on{id}}}

\def\Fun{{\on{Fun}}}

\def\Cat{\mathbb{C}\!\on{at}}
\def\Catt{\mathbb{C}\!\on{at}_2}
\def\SCatt{\mathbb{C}\!\on{at}_2^{\on{st}}}
\def\FUNst{\Fun^{\on{st}}}
\def\St{\mathbb{S}\!\on{t}}

\def\oplax{\on{oplax}}
\def\lax{\on{lax}}

\def\ev{\on{ev}}

\def\Sph{\mathsf{Sph}}

\def\tre{\triangleright}
\def\totcof{\on{totcof}}

\def\LDK\on{LDK}

\DeclareMathOperator*{\colim}{colim}

\def\Fun{\on{Fun}}

\def\max{\on{max}}
\def\fSp{\on{fSp}}
\def\gSp{\on{gSp}}

\def\gr{\on{gr}}
\def\triv{\on{triv}}

\def\totcof{\on{totcof}}

\def\ev{\on{ev}}

\def\max{\on{max}}
\def\cone{\on{cone}}
\def\DK{\on{DK}}

\def\Fgt{\on{fgt}}

\def\sT{\on{T}}

\def\sY{\on{Y}}

\def\bA{\mathbb{A}}
\def\bC{\mathbb{C}}
\def\bD{\mathbb{D}}
\def\bG{\mathbb{G}}
\def\bL{\mathbb{L}}

\def\bX{\mathbb{X}}

\def\bZ{\mathbb{Z}}

\def\cA{\scr{A}}
\def\cB{\scr{B}}
\def\cC{\scr{C}}
\def\cD{\scr{D}}

\def\cF{\scr{F}}

\def\cJ{\scr{J}}

\def\cK{\scr{K}}
\def\cL{\scr{L}}

\def\cQ{\scr{Q}}
\def\cS{\scr{S}}

\def\cX{\scr{X}}

\def\fS{\mathfrak{S}}

\def\Adj{\mathsf{Adj}}

\def\LDK{\on{LDK}}

\def\Sp{\on{Sp}}

\def\Ind{\on{Ind}}

\DeclareMathOperator{\fib}{fib}
\DeclareMathOperator{\cof}{cof}

\DeclareMathOperator{\Ts}{T}

\def\Nat{\on{Nat}}

\def\lra{\longrightarrow}
\def\lla{\longleftarrow}

\def\llra{\def\arraystretch{.1}\begin{array}{c} \lra \\ \lla \end{array}}

\def\sph{\mathfrak{S}}
\def\cosph{\mathfrak{L}}

\newtheorem{introthm}{Theorem}

\begin{document}
\begin{abstract}
    For every adjunction of stable $\infty$-categories --or more generally, in any locally stable $(\infty,2)$-category-- we give a simple procedure for inverting the twist and cotwist functors associated to this adjunction. As a consequence, we obtain an explicit construction for a left and right adjoint to the inclusion of the $(\infty,2)$-category of spherical adjunctions of stable $\infty$-categories into all adjunctions. We utilize these adjoints to give a description of the \emph{walking spherical adjunction}, a locally stable $(\infty,2)$-category which classifies spherical adjunctions, and to provide a synthetic proof of the fact that every spherical functor admits infinitely many left and right adjoints.
\end{abstract}
\title{Sphericalization and the Universal Spherical Adjunction}
\author{Fernando Abell\'{a}n and Jonte G\"{o}dicke}

\maketitle

\section{Introduction}
Suppose that we are given an adjunction of stable $\infty$-categories $L \colon \cC \llra \cD \colon R$. To this datum, one can associate a pair of endofunctors
\[
\on{T}_{\cC}\coloneq \cof(\id_{\cC} \rightarrow RL), \quad \on{T}_{\cD} \coloneq \fib(LR \rightarrow \id_{\cD}),
\]
called the \emph{twist} and \emph{cotwist} functors respectively. If $\Ts_{\cC}$ and $\Ts_{\cD}$ happen to be equivalences we will say that our adjunction is \emph{spherical}.

Spherical adjunctions were introduced in \cite{anno2017spherical} by Anno and Logvinenko as a generalization of Seidel and Thomas notion of spherical objects \cite{seidel2001braid}. While initially developed as a tool to construct braid group actions on bounded derived categories, spherical adjunctions have, over the past decade, become of independent interest \cite{christ2022ginzburg,dyckerhoff2024spherical}. One source of this growing attention is their connection to the theory of \emph{perverse schobers}, introduced by Kapranov and Schechtman \cite{kapranov2014perverse}. Perverse schobers are a proposed categorification of perverse sheaves, and in this context, spherical adjunctions are expected to provide local models for a schober on a disk with a singularity at the origin.

In the decategorified setting, classical constructions provide systematic ways to produce perverse sheaves from more general constructible ones. For spherical adjunctions, however, no comparable constructions are currently available. The aim of this paper is to develop a (co)free construction for spherical adjunctions.

Let us denote by $\iota \colon \Sph(\St) \hookrightarrow \Adj(\St)$ the full sub-$(\infty,2)$-category of the $(\infty,2)$-category of adjunctions of stable $\infty$-categories spanned by those adjunctions which are spherical. In Section ~\ref{sec:the_sphericalization_construction}, we will show that for every adjunction as above, we have a canonical commutative diagram
\[
	\begin{tikzcd}
	\cC \arrow[d,"L",swap] \arrow[r,"{\Ts_{\cC}}"] & \cC \arrow[d,"L"] \\
	\cD \arrow[r,"{\Ts_{\cD}}"] & \cD{,}
	\end{tikzcd}
\]
which is vertically adjointable. In other words, we show that the diagram above defines a morphism in $\Adj(\St)$ --or more generally in $\Adj(\bA)$ by replacing $\St$ with any $(\infty,2)$-category $\bA$ enriched in stable categories\footnote{We will refer to those as locally stable $(\infty,2)$-categories}-- which extends to an endonatural transformation of the identity functor. We exploit this construction in order to show the following theorem (see \autoref{thm:sphericalize}):

\begin{introthm}\label{introthm:sphintro}
	There exists adjunctions of $(\infty,2)$-categories 
	\[
	\mathfrak{S} \colon \Adj(\St) \llra \Sph(\St)\colon \iota, \enspace \enspace \iota \colon \Sph(\St) \llra \Adj(\St) \colon \mathfrak{L},
	\]
where we call the left adjoint $\mathfrak{S}$ the \emph{sphericalization} functor. The functor $\mathfrak{S}$ takes an adjunction $L \colon \cC \llra \cD \colon R$ to the directed colimit
\[\begin{tikzcd}
	\cC & \cC & \cC & \cdots \\
	\cD & \cD & \cD & \cdots
	\arrow["{\Ts_{\cC}}", from=1-1, to=1-2]
	\arrow[from=1-1, to=2-1]
	\arrow["{\Ts_{\cC}}", from=1-2, to=1-3]
	\arrow[from=1-2, to=2-2]
	\arrow["{\Ts_{\cC}}", from=1-3, to=1-4]
	\arrow[from=1-3, to=2-3]
	\arrow[from=1-4, to=2-4]
	\arrow["{\Ts_{\cD}}", from=2-1, to=2-2]
	\arrow["{\Ts_{\cD}}", from=2-2, to=2-3]
	\arrow["{\Ts_{\cD}}", from=2-3, to=2-4]
\end{tikzcd}\]
The functor $\mathfrak{L}$ is defined dually by taking the limit of the opposite of the diagram above. Moreover, the result holds mutatis mutandis by replacing $\St$ by any locally stable $(\infty,2)$-category $\bA$ which is sufficiently (co)complete.
\end{introthm}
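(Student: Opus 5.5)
We construct $\mathfrak{S}$ as a telescope along a natural endotransformation of the identity of $\Adj(\St)$, in the spirit of localization by inverting an element. By the construction announced in Section~\ref{sec:the_sphericalization_construction}, the pair $(\Ts_{\cC},\Ts_{\cD})$ together with the canonical commutation $L\Ts_{\cC}\simeq \Ts_{\cD}L$ is a vertically adjointable square. It therefore defines an endomorphism $\tau_{(L,R)}$ of the object $(L\dashv R)$ in $\Adj(\bA)$, and these assemble into a natural transformation $\tau\colon \id_{\Adj(\bA)}\Rightarrow \id_{\Adj(\bA)}$. An object of $\Adj(\bA)$ is spherical exactly when $\tau$ is invertible at it. The key point is that equivalences in $\Adj(\bA)$ are detected on the underlying left adjoints, i.e.\ componentwise on $\cC$ and $\cD$, since the mate of an invertible square of equivalences is automatically invertible.

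\emph{Construction.} We define $\mathfrak{S}(L\dashv R)$ as the colimit in $\Adj(\bA)$ of the sequence $X\xrightarrow{\tau}X\xrightarrow{\tau}\cdots$. The first step is to show that this colimit exists and is computed levelwise, as in the displayed diagram.
\begin{itemize}
\item Colimits in $\Adj(\bA)$ of diagrams of left-adjoint squares are computed by taking colimits $\cC_\infty,\cD_\infty$ in $\bA$ along the left adjoints. The induced $L_\infty$ has a right adjoint, obtained by passing to right adjoints and identifying the sequential colimit with the limit of the right adjoints (the usual $\Pr^L/\Pr^R$ duality; in general $\bA$ this is the sufficient-(co)completeness hypothesis).
\item The twist functors of the colimit are computed levelwise, because $\cof$ and $\fib$ commute with these colimits. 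So $\tau_{\mathfrak{S}X}$ is the colimit of the shifted map of sequences, namely the shift map $\on{colim}(X\xrightarrow{\tau}\cdots)\to\on{colim}(X\xrightarrow{\tau}\cdots)$. The shift map is an equivalence because the naturality squares $\tau\circ\tau=\tau\circ\tau$ exhibit it as inverse to the map induced by reindexing.
\end{itemize}
Hence $\mathfrak{S}X$ is spherical.

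\emph{Universal property.} For $Y$ spherical, we compute
\[
\Fun_{\Adj}(\mathfrak{S}X,Y)\simeq \lim\bigl(\cdots\xrightarrow{\tau^*}\Fun(X,Y)\xrightarrow{\tau^*}\Fun(X,Y)\bigr).
\]
By naturality of $\tau$, precomposition with $\tau_X$ agrees with postcomposition with $\tau_Y$, which is an equivalence since $Y$ is spherical. So this is a limit of a tower of equivalences, and the map to $\Fun(X,Y)$ given by restriction along the unit $X\to\mathfrak{S}X$ is an equivalence of mapping $\infty$-categories. This gives the $(\infty,2)$-adjunction $\mathfrak{S}\dashv\iota$. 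The functor $\mathfrak{L}$ is dual: take the limit of $\cdots\to X\xrightarrow{\tau}X$, or equivalently apply the above in $\Adj(\bA)^{\op}$-type duality, using that limits along right adjoints exist under the completeness hypothesis.

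\emph{Main obstacle.} The hard step is not the telescope argument but establishing that $\tau$ is genuinely a natural endotransformation of $\id_{\Adj(\bA)}$ at the $(\infty,2)$-level. This needs the adjointability of the square, which requires a coherent equivalence $R\Ts_{\cD}\simeq \Ts_{\cC}R$ compatible with the mate, and coherent naturality in morphisms of adjunctions. I would try to obtain this universally: construct the square once in the walking adjunction $\Adj$, enriched in stable categories, as an endomorphism of the generic object. Naturality and coherence then follow by functoriality of $\Fun(\Adj,\bA)$ in $\bA$. A secondary issue is showing that sequential colimits in $\Adj(\bA)$ exist and are computed levelwise.
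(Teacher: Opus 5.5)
Your proposal is essentially the paper's proof. As there, $\tau$ comes from the twist square of the generic adjunction in $\Sigma^{\infty}_2\Adj$ and $\mathfrak{S}$ is the telescope along $\tau$; sphericity of the colimit follows from computing the twists levelwise with the diagonal identities as inverse, and the universal property is the same tower argument in which precomposition with $\tau_X$ becomes postcomposition with the invertible $\tau_Y$.

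Two small caveats. First, the one non-formal input, that the mate $\Ts_{\cC}R\to RL\Ts_{\cC}R\to R\Ts_{\cD}LR\to R\Ts_{\cD}$ is invertible, is taken from the introduction rather than proved; the paper proves it in Lemma~\ref{lem:tmorphadjs} by showing that the fibre of this composite vanishes, comparing with $\beta\ast\Ts_{\cD}$ and $\Ts_{\cC}\ast\beta$. Second, the $\mathrm{Pr}^{L}/\mathrm{Pr}^{R}$ duality you invoke for $R_\infty$ is neither needed nor available in $\St$: colimits in $\Fun(\Adj,\bA)$ are computed pointwise, so $R_\infty$ is just the map induced on colimits by the copies of $R$, and this is exactly where that adjointability is used.
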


As an application of \cref{introthm:sphintro}, we give an explicit construction of a locally stable $(\infty,2)$-category $\Sph$, which classifies spherical adjunctions (see Theorem~\ref{introthm:B} below). In order to explain this construction, first we have to understand $\Sigma_2^{\infty}  \Adj$ the free locally stable $(\infty,2)$-category generated by the walking adjunction\footnote{I.e. the universal 2-category classifying adjunctions.}  $\Adj$  \cite{schanuel1986free}. This former $(\infty,2)$-category can easily be identified with the full sub-$(\infty,2)$-category of $\Fun(\Adj^\op,\St)$ generated by the stabilized representable functors 
\[
	\on{Y}(\epsilon) \colon \Adj^\op \to \St, \enspace \delta \mapsto \Sigma_1^{\infty} \Adj(\delta,\epsilon), \enspace \text{ for  }\epsilon \in \Adj,
\]
where $\Sigma_1^{\infty}$ denotes the free stable $\infty$-category functor. We can now define $\Sph$ to be the full sub-$(\infty,2)$-category of $\Fun(\Adj^\op,\St)$ on the sphericalized functors $\sph \on{Y}(\epsilon)$ for $\epsilon \in \Adj$ and observe that we have a canonical functor $\pi \colon \Sigma_2^{\infty} \Adj \to \Sph$. The following result can be found as \autoref{thm:unipropsph} in the main text.

\begin{introthm}\label{introthm:B}
	For every locally stable $(\infty,2)$-category $\bA$, restriction along $\pi$ yields a fully-faithful functor\footnote{Here $\Fun^{\on{st}}(\bC,\bD)$ denotes the full sub-$(\infty,2)$-category of $\Fun(\bC,\bD)$ of functors among locally stable $(\infty,2)$-categories which are exact on mapping $\infty$-categories.}
	\[
		\pi^* \colon \Fun^{\on{st}}(\Sph,\bA) \to \Fun^{\on{st}}(\Sigma_2^{\infty} \Adj,\bA)=\Adj(\bA),
	\]
	with essential image given by $\Sph(\bA)$.
\end{introthm}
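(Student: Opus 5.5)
The plan is to treat $\Sph$ as the locally stable $(\infty,2)$-category corepresenting $\Sph(\bA)$, and to derive this from the sphericalization adjunction $\mathfrak{S}\dashv\iota$ of \cref{introthm:sphintro}, applied in a universal setting. I would first reduce to an enriched Yoneda statement. Functors $\Sigma_2^\infty\Adj\to\bA$ that are exact on mapping categories are the same as functors $\Adj\to\bA$, that is, adjunctions in $\bA$. I would then embed $\bA$ into $\Fun^{\on{st}}(\bA^\op,\St)$ via the stable Yoneda embedding. Since $\Sph(-)$ is detected objectwise (the twist and cotwist are computed by cofibers in mapping categories, which the stable Yoneda embedding preserves), an adjunction in $\bA$ is spherical if and only if its image in $\Fun^{\on{st}}(\bA^\op,\St)$ is spherical. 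That presheaf category is sufficiently (co)complete for \cref{introthm:sphintro} to apply. So it suffices to understand exact functors out of $\Sph$ into such cocomplete targets.

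\textbf{Step 1: $\pi$ is a localization.} Let $\mathbb{P}=\Fun(\Adj^\op,\St)$. I would check that the image of $\pi$ corepresents the sphericalization. For $\epsilon,\epsilon'\in\Adj$, the adjunction $\mathfrak{S}\dashv\iota$, taken in $\mathbb{P}$, gives
\[
\Sph(\sph\on{Y}(\epsilon),\sph\on{Y}(\epsilon'))\simeq \mathbb{P}(\on{Y}(\epsilon),\sph\on{Y}(\epsilon')).
\]
By Yoneda this is the value of $\sph\on{Y}(\epsilon')$ at $\epsilon$. This identifies $\pi$ on mapping categories with the unit $\on{Y}(\epsilon')\to\sph\on{Y}(\epsilon')$ evaluated at $\epsilon$, which is the colimit along iterated twists. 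In other words, $\Sph$ is obtained from $\Sigma_2^\infty\Adj$ by formally inverting the twist and cotwist of the universal adjunction. Concretely, the hom-categories of $\Sph$ are the colimits of the towers $\Ts^n$ applied to the hom-categories of $\Sigma_2^\infty\Adj$, and the image of the universal adjunction in $\Sph$ is spherical.

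\textbf{Step 2: essential image.} Consider an exact functor $F\colon \Sph\to\bA$. The functor $F\pi$ picks out the image of the universal adjunction. Since exact functors preserve the cofiber sequences defining $\Ts$, and invertibility is preserved by any functor, $F\pi$ lies in $\Sph(\bA)$. Conversely, given a spherical adjunction $G\colon\Sigma_2^\infty\Adj\to\bA$, I would pass to $\Fun^{\on{st}}(\bA^\op,\St)$ and form the left Kan extension along the stable Yoneda embedding of $\Sigma_2^\infty \Adj$. This produces a colimit-preserving functor $\mathbb{P}\to\Fun^{\on{st}}(\bA^\op,\St)$. Such a functor commutes with $\sph$, since $\sph$ is built from twists, which are cofibers of units, together with a sequential colimit. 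Because $G$ is spherical, the twists act invertibly on it, so the colimit defining $\sph$ is constant. Hence the extension sends $\sph\on{Y}(\epsilon)$ to $G(\epsilon)$, which lies in the essential image of $\bA$. Restricting to $\Sph$ therefore gives the desired lift $F$ with $F\pi\simeq G$.

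\textbf{Step 3: full faithfulness.} Here I would show that $\pi^*$ induces equivalences on mapping categories. Natural transformations between exact functors out of $\Sph$ are determined on the generators $\sph\on{Y}(\epsilon)$. The hom-categories of $\Sph$ are colimits of those of $\Sigma_2^\infty\Adj$, as established in Step 1. So it is enough to show that restriction along a localization of the form ``formally invert $\Ts$'' is fully faithful on exact functors. I would reduce this to the statement that $\pi$ is essentially surjective and that each of its local functors is a colimit localization. Both hold because $\pi$ is bijective on objects by construction.

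I expect the main obstacle to be Step 2's claim that the left Kan extension commutes with $\sph$. This requires knowing that the twist natural transformation constructed in Section~\ref{sec:the_sphericalization_construction} is natural with respect to exact, colimit-preserving functors of locally stable $(\infty,2)$-categories, and not merely endonatural on $\Adj(\bA)$ for a fixed $\bA$. My first approach would be to observe that the twist square for a given adjunction is the image of the universal twist square in $\Sigma_2^\infty\Adj$, via the functor classifying the adjunction. That makes its compatibility with any exact functor automatic.
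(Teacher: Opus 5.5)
Your reduction to a cocomplete target and your Step~2 agree in substance with the paper. Your Kan extension along $\Sigma^\infty_2\Adj\hookrightarrow\Fun(\Adj^{\op},\St)$ plays the role of the paper's $j_!$ along $\Sigma^\infty_2\Adj\hookrightarrow\cL$. Your remark that the universal twist square is carried to the twist square of every classified adjunction is the right reason the extension commutes with $\sph$. You should still check that the extension is locally exact on $\Sph$, which the paper does in \autoref{lem:cocompletions}.

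The genuine gap is Step~3. Bijectivity on objects says nothing about the local functors. Moreover, the local functors $\Sigma^\infty_2\Adj(\epsilon,\delta)\to\Sph(\pi\epsilon,\pi\delta)$ are not localizations in any useful sense. They are the inclusions of the first stage of a sequential colimit along the twist: fully faithful but not essentially surjective. Under the identifications of \autoref{thm:sphadj} they look like $i_!\colon\fSp_{\geq 0}\hookrightarrow\fSp$. A bijective-on-objects, locally fully faithful $2$-functor need not induce a fully faithful restriction functor; a wide non-full subcategory already fails. So the criterion you propose is not valid. The issue is not the components at the generating objects. It is showing that naturality data over the larger categories $\Sph(\pi\epsilon,\pi\delta)$ is coherently determined by naturality data over $\Sigma^\infty_2\Adj(\epsilon,\delta)$. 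For the same reason, the sentence in Step~1 that $\Sph$ ``is obtained from $\Sigma^\infty_2\Adj$ by formally inverting the twist'' does not follow from the hom-category computation. That universal property is exactly what the theorem asserts.

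The missing idea is to use the sphericalization adjunction as a reflection, not only to compute hom-categories. The paper works in the full sub-$2$-category $\cL\subseteq\Fun(\Adj^{\op},\St)$ spanned by $\Sigma^\infty_2\Adj$ and $\Sph$. There, \autoref{thm:sphericalize} restricts to $\sph\colon\cL\rightleftarrows\Sph\colon\varphi$ with $\sph\varphi\simeq\id$; this is where the endonaturality of the twist enters. Hence $\sph^*\colon\FUNst(\Sph,\bA)\to\FUNst(\cL,\bA)$ is fully faithful, with essential image the functors $G$ inverting the units $\sY(\epsilon)\to\sph\sY(\epsilon)$. Such $G$ satisfy $G(\sph\sY(\epsilon))\simeq\colim_{\mathbb{N}}G(\sY(\epsilon))$. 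So they are cocontinuous and lie in the image of the fully faithful $j_!$ of \autoref{lem:cocompletions}, on which $j^*$ is inverse to $j_!$. Therefore $\pi^*=j^*\circ\sph^*$ is fully faithful, and $\varphi^*\circ j_!$ inverts it on $\Sph(\bA)$.

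In your setup, what must be added is that every $F\in\FUNst(\Sph,\bA)$ is recovered from $F\pi$, naturally in $F$, by your extend-and-restrict construction. The reflection is what guarantees this.
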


The stabilized representable functors $\on{Y}(\epsilon)$ encode adjunctions between categories of spectral presheaves on simplex type categories. We will use Dold-Kan type equivalences \cite{walde2022homotopy} to identify these with subcategories of the $\infty$-categories of filtered $\fSp$ and graded spectra $\gSp$ and show that the Dold-Kan equivalences are compatible with the adjunction data. Equipped with this, we will provide an explicit description of the $(\infty,2)$-category $\Sph$ (see \autoref{thm:sphadj}):

\begin{introthm}\label{introthm:C}
	There exists an equivalence of spherical adjunctions
\[
\sph \on{Y}(+) \simeq \left( \begin{tikzcd}
	\fSp \arrow[r,bend left =30,"\gr"] & \gSp \arrow[l,bend left =30,"\triv"] 
\end{tikzcd} \right)
\quad \quad   
\sph \on{Y}(-) \simeq \left( \begin{tikzcd}
	\gSp \arrow[r,bend left =30,"\triv{[-1]}\langle 1\rangle"] & \fSp \arrow[l,bend left =30,"\gr"] 
\end{tikzcd} \right)
\] 
where the functor $\gr$ associates to a filtered spectrum its associated graded and the functor $\triv$ associates to a graded spectrum the associated filtered spectrum with $0$-maps. In particular, $\Sph$ can be identified with the full sub-$(\infty,2)$-category of $\Adj(\St)$ generated by the above adjunctions.
\end{introthm}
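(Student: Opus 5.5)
We must compute $\sph \on{Y}(+)$ and $\sph \on{Y}(-)$. Here $\on{Y}(\epsilon)$ is the stabilized representable on the object $\epsilon\in\{+,-\}$ of the walking adjunction $\Adj$. Recall that $\Adj(+,+)$ is the augmented simplex category $\Delta_+$, the free monoidal category on a monoid. The hom-categories $\Adj(+,-)$ and $\Adj(-,+)$ are $\Delta_{\max}$/$\Delta_{\min}$-type categories of order-preserving maps preserving a top or bottom element.

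The plan has three steps.

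\emph{Step 1: identify $\on{Y}(\pm)$ via Dold--Kan.} The adjunction $\on{Y}(+)$ is given by precomposition and its adjoints. Concretely, it is
\[
\Fun(\Delta_+^{\op},\Sp) \llra \Fun(\Adj(-,+)^{\op},\Sp),
\]
obtained from post-composition with the unit and counit 2-cells. Using the stable Dold--Kan correspondences of \cite{walde2022homotopy}, the first category becomes a category of filtered-type objects (in $\fSp$) and the second a category of graded spectra (in $\gSp$). Restricted to the relevant subcategories, the left adjoint is identified with $\gr$ and the right adjoint with $\triv$. The first thing to verify, by checking on representables, is that the Dold--Kan equivalences intertwine the unit and counit, not merely the functors.

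\emph{Step 2: compute the twist functors.}
\begin{itemize}
\item For the adjunction $\gr \dashv \triv$ on $\fSp$, the unit $X\to \triv\gr X$ has cofiber computable levelwise. The twist $\Ts = \cof(\id\to \triv\,\gr)$ should be a shift combined with a reindexing, $X\mapsto X[1]\langle -1\rangle$ up to boundary terms.
\item Likewise, $\Ts_{\gSp}=\fib(\gr\,\triv\to\id)$ should be $[-1]\langle 1\rangle$ on the graded side, except at the boundary degree, where $\gr\triv$ fails to be the identity plus a shift.
\end{itemize}
The point is that on the Dold--Kan model, $\Ts$ is "shift the grading by one" composed with an exact endofunctor. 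It fails to be invertible only because one end of the indexing is truncated, for example $\mathbb{N}$ versus $\mathbb{Z}$.

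\emph{Step 3: pass to the colimit.} By \cref{introthm:sphintro}, $\sph$ is the sequential colimit along $\Ts$ in the bottom and top rows. Colimits in $\St$ along such shift functors are computed by the limit of right adjoints in $\Pr^L$, or via idempotent-complete colimits of the underlying categories. The colimit of $\mathbb{N}$-indexed filtered spectra along a grading shift produces $\mathbb{Z}$-indexed filtered spectra, which is $\fSp$; the same holds on the graded side. The induced adjunction in the colimit is $\gr\dashv\triv$. Throughout, we check that the colimit is taken in $\Adj(\St)$, i.e.\ that the left adjoints assemble and the right adjoints follow by adjointability, as the main text guarantees.

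For $\on{Y}(-)$ we proceed identically. The roles are swapped, and the unit now lands in graded spectra. The resulting left adjoint is $\triv$ twisted by the normalization shift $[-1]\langle1\rangle$, which arises because the unit and counit indexing are offset by one.

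The final claim identifies $\Sph$ with the full sub-$(\infty,2)$-category of $\Adj(\St)$ on these two adjunctions. This follows from the definition of $\Sph$ as the full subcategory of $\Fun(\Adj^\op,\St)$ on the $\sph\on{Y}(\epsilon)$, together with the identification $\Fun(\Adj^{\op},\St)\simeq \Adj(\St)$. Full faithfulness is automatic since both sides are full subcategories.

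\emph{Main obstacle.} I expect the hard part to be Step 1 together with the boundary bookkeeping in Step 2: proving that the Dold--Kan equivalences are compatible with the full adjunction data, and tracking exactly where the shifts $[-1]\langle1\rangle$ come from. I would first verify everything on generators, namely the representables and hence the free filtered and graded spectra, and then extend by exactness and colimits.
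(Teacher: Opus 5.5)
Your architecture matches the paper's. You identify $\sY(+)$ and $\sY(-)$ with the truncated adjunctions $\gr_{\geq 0}\dashv\triv_{\geq 0}$ and $\triv_{\leq 0}[-1]\langle 1\rangle\dashv\gr_{\leq 0}$ via Dold--Kan, observe that twist and cotwist are shifts that fail to be invertible only because of the truncation, and pass to the sequential colimit. However, Step 2 contains a concrete error which, taken at face value, breaks Step 3.

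The cotwist of $\gr_{\geq 0}\dashv\triv_{\geq 0}$ is not $[-1]\langle 1\rangle$. One has $\gr\triv(Y)_n\simeq\cof(Y_{n-1}\xrightarrow{0}Y_n)\simeq Y_n\oplus Y_{n-1}[1]$, with the counit given by the projection. Hence $\Ts_{\gSp_{\geq 0}}(Y)_n\simeq Y_{n-1}[1]$, i.e.\ $\Ts_{\gSp_{\geq 0}}\simeq\langle -1\rangle[1]$, the \emph{same} shift as the twist. This is Lemma~\ref{lem:graded triv}. Your formula is also incompatible with the equivalence $\Ts_{\cD}L\simeq L\Ts_{\cC}$ of \eqref{eq:counitsquares}, since $\gr$ commutes with shifts and $\Ts_{\fSp_{\geq 0}}\simeq\langle -1\rangle[1]$.

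The direction is exactly what Step 3 depends on. The functor $\langle -1\rangle[1]$ is fully faithful on $\bZ_{\geq 0}$-indexed objects: it shifts up and puts $0$ in degree $0$. So the colimit along it is the union, which gives $\bZ$-indexed objects. Your $[-1]\langle 1\rangle$, i.e.\ $Y\mapsto Y_{\bullet+1}[-1]$, discards degree $0$, and the colimit along it is zero. The graded side of $\fS\sY(+)$ would then not be $\gSp$, and ``the same holds on the graded side'' would be false. With the correct formula, Step 3 goes through.

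Two smaller remarks.

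In Step 1, compatibility with unit and counit is not an extra verification. A commutative square whose horizontal functors are equivalences is automatically adjointable; this is how Corollary~\ref{cor:adj} concludes. The actual content is twofold:
\begin{itemize}
\item constructing the natural equivalence $\DK_{\infty}\circ\iota^*\simeq\langle 1\rangle\gr_{\geq 0}[-1]\circ\on{LDK}$ as a coherent transformation, rather than objectwise on representables;
\item proving that $\DK_\infty$ is an equivalence.
\end{itemize}
The paper does the first in one stroke, via the Kan extensions $\mu_*$, $\omega_!$ and the bicartesian square of Construction~\ref{const:laxcommuting}. It does the second by a comonadicity argument (Theorem~\ref{thm:DK+}).

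In Step 3, the paper does not identify ``the induced adjunction in the colimit'' by hand. Extension by zero $(i_!,i^{\simeq}_!)$ is a morphism of adjunctions $(\gr_{\geq 0}\dashv\triv_{\geq 0})\to(\gr\dashv\triv)$ into a spherical target. So Theorem~\ref{thm:sphericalize} produces a comparison map $\fS\sY(+)\to(\gr\dashv\triv)$ in $\Adj(\St)$, and one only has to check on underlying categories that it is an equivalence. Your direct computation needs the same input, namely the adjointable cocone $X\mapsto i_!(X)\langle k\rangle[-k]$, and you should write it down explicitly. The universal property simply packages it.
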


It has been shown in \cite[Cor.2.5.16]{dyckerhoff2024spherical} using the theory of semi-orthogonal decompositions that one can associate to every spherical adjunction $L\vdash R$ a new pair of spherical adjunction $T_{\cC}^{-1}R \vdash L$ and $R\vdash T_{\cD}^{-1}L$. The abstract $(\infty,2)$-categorical approach of this article allows us to provide a direct proof of this result without bypassing through the theory of semi-orthogonal decompositions.

More precisely, using the defining bicartesian squares of the (co)twist functor\footnote{See the beginning of Section~\ref{sec:the_sphericalization_construction} for a definition.} and Theorem~\ref{introthm:sphintro}, we obtain putative (co)unit maps 
\[
T_{\cD}^{-1}\ast c_{w}: \id\rightarrow T_{\cD}^{-1}LR \simeq LT_{\cC}^{-1} R \quad T_{\cC}^{-1}\ast t_{w}:T_{\cC}^{-1}RL \rightarrow \id
\]
for the new adjunction. Using that by \autoref{introthm:sphintro} the twist and cotwist define coherent morphisms of adjunctions, we can directly verify the triangular identities. Applying this to the generic spherical adjunction in $\Sph$, we obtain:

\begin{introthm}
	 For every locally stable $(\infty,2)$-category $\bA$, there exists an equivalence of $(\infty,2)$-categories,
    \[
        \mathfrak{F} \colon \Sph(\bA) \xlongrightarrow{\simeq} \Sph(\bA),  
    \]
    sending an adjunction $L \vdash R$ in $\bA$ to the adjoint pair $T_C^{-1}R \dashv L$. The inverse functor $\mathfrak{F}^{-1}$ assigns to each adjunction as above the adjoint pair $R \dashv T_{\cD}^{-1}L$.
\end{introthm}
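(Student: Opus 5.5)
We construct the functor $\mathfrak{F}$ first on the universal example and then transport it. By \autoref{introthm:B}, $\Sph(\bA)\simeq\Fun^{\on{st}}(\Sph,\bA)$ naturally in $\bA$. So it suffices to produce an exact autoequivalence $\phi\colon\Sph\to\Sph$ of the walking spherical adjunction which sends the generic adjunction $L\vdash R$ to $\Ts_{\cC}^{-1}R\vdash L$. We then define $\mathfrak{F}=\phi^*$. Such a $\phi$ amounts to a spherical adjunction in $\Sph$ itself, namely $(\Ts_{\cC}^{-1}R\dashv L)$ between the generic objects with the roles of source and target swapped. Once we check it is an adjunction and is spherical, \autoref{introthm:B} provides $\phi$. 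Naturality of precomposition then gives the pointwise description of $\mathfrak{F}$ in every $\bA$.

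\textbf{Step 1: the adjunction.} Work in an arbitrary locally stable $\bA$ with a spherical adjunction $L\vdash R$, unit $u$, counit $c$. By \autoref{introthm:sphintro}, $(\Ts_{\cC},\Ts_{\cD})$ is a morphism of adjunctions, and so are their inverses. This gives coherent equivalences $L\Ts_{\cC}^{-1}\simeq \Ts_{\cD}^{-1}L$ and $\Ts_{\cC}^{-1}R\simeq R\Ts_{\cD}^{-1}$, compatible with units and counits.

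Candidate unit and counit come from the defining bicartesian squares. The fibre sequence $\Ts_{\cD}\to LR\xrightarrow{c}\id$ yields a boundary map $c_w\colon \id\to \Ts_{\cD}[1]$. Similarly $\id\xrightarrow{u} RL\to \Ts_{\cC}$ yields $t_w$. Up to shifts absorbed into the normalization of the twists as in the statement, these become $\Ts_{\cD}^{-1}\ast c_w\colon \id\to \Ts_{\cD}^{-1}LR\simeq L\Ts_{\cC}^{-1}R$ and $\Ts_{\cC}^{-1}\ast t_w\colon \Ts_{\cC}^{-1}RL\to\id$.

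To verify the triangle identities, whisker the defining cofibre sequences by $L$ or $R$. The identity $Lu$ followed by $cL$ equal to $\id_L$ shows that $L\Ts_{\cC}\to LRL\to\dots$ and $\Ts_{\cD}L\to LRL\to L$ fit into a split diagram. Compatibility of the twist with $L$ identifies the connecting maps, so each composite $L\to L\Ts_{\cC}^{-1}RL\to L$ is the identity. The second triangle identity follows in the same way using $R$.

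\textbf{Step 2: sphericity and invertibility.} Next we compute the twists of the new adjunction $\Ts_{\cC}^{-1}R\dashv L$ with unit $\Ts_{\cD}^{-1}c_w$. Its cotwist is the fibre of the counit on $\cC$, and should be identified with $\Ts_{\cC}^{-1}$ up to shift. Its twist should be identified with $LR$-type data, which is the fibre of $c$, namely $\Ts_{\cD}$, again up to shift. Both are equivalences, so the new adjunction is spherical.

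The same construction with $R\dashv \Ts_{\cD}^{-1}L$ gives $\phi'$. To show $\phi\circ\phi'\simeq\id$, it suffices by \autoref{introthm:B} to identify the resulting generic adjunctions. Applying $\phi$ to $R\dashv\Ts_{\cD}^{-1}L$ yields $\Ts^{-1}\Ts_{\cD}^{-1}L\dashv R$. Since its twist is $\Ts_{\cD}^{-1}$ by Step 2, this is $L\dashv R$. We then check that units and counits agree, which reduces to $c_w$ being a boundary of a boundary.

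\textbf{Main obstacle.} The hard part is Step 1: making the triangle identities coherent rather than merely objectwise, and controlling signs and shifts of the connecting maps. I would try to do everything inside the walking $\Sph$, using the Dold–Kan model of \autoref{introthm:C}. There $L=\gr$, $R=\triv$, and the twists are explicit shift/grading functors, so the required homotopies become checks on filtered and graded spectra.
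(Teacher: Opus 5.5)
Your overall architecture matches the paper's. You construct $T_{\cC}^{-1}R\dashv L$ and $R\dashv T_{\cD}^{-1}L$ for an arbitrary adjunction, apply this to the generic spherical adjunction in $\Sph$, obtain $\phi,\phi'\colon\Sph\to\Sph$ from \autoref{thm:unipropsph}, and set $\mathfrak{F}=\phi^*$. The trouble is in the local steps, and it is not cosmetic.

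\textbf{Unit, counit and Step 2.} The maps $c_w$ and $t_w$ are not boundary maps. They are the maps $c_w\colon T_{\cD}\to LR$ and $t_w\colon RL\to T_{\cC}$ in the defining bicartesian squares \cref{eq:twctw}, so that $T_{\cD}^{-1}\ast c_w\colon \id\simeq T_{\cD}^{-1}T_{\cD}\to T_{\cD}^{-1}LR$ and $T_{\cC}^{-1}\ast t_w\colon T_{\cC}^{-1}RL\to T_{\cC}^{-1}T_{\cC}\simeq\id$ directly. With your boundary map $\id\to T_{\cD}[1]$, whiskering by $T_{\cD}^{-1}$ gives a map $T_{\cD}^{-1}\to\id_{\cD}[1]$, which is not a candidate unit. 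There is also no ``normalization of the twists'' in the statement that could absorb the discrepancy. With the correct maps no shifts occur anywhere, and Step 2 is a one-line computation. Since $T_{\cD}\xrightarrow{c_w}LR\xrightarrow{\epsilon}\id$ and $\id\xrightarrow{\eta}RL\xrightarrow{t_w}T_{\cC}$ are cofibre sequences, the new twist is $\cof(T_{\cD}^{-1}\ast c_w)\simeq T_{\cD}^{-1}$ and the new cotwist is $\fib(T_{\cC}^{-1}\ast t_w)\simeq T_{\cC}^{-1}$, exactly. Your answer (``$T_{\cD}$ up to shift'') is wrong, and it does not support what you use afterwards. The check $\phi\circ\phi'\simeq\id$ needs the twist of $R\dashv T_{\cD}^{-1}L$ to be precisely $T_{\cD}^{-1}$, and dually the cotwist of $T_{\cC}^{-1}R\dashv L$ to be precisely $T_{\cC}^{-1}$. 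Otherwise the composite returns a shifted left adjoint rather than $L$.

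\textbf{Triangle identities.} Your paragraph here is not yet an argument, and the map goes $L\ast t_w\colon LRL\to LT_{\cC}$, not $LT_{\cC}\to LRL$. The paper makes it precise by pasting bicartesian squares in the stable category of $1$-morphisms:
\begin{itemize}
\item On top, $T_{\cD}^{-1}$ applied to the cotwist square whiskered by $L$, with corner $P$ the pullback of $T_{\cD}^{-1}L\ast\eta$ along $T_{\cD}^{-1}\ast c_w\ast L$.
\item In the middle, the square identifying $T_{\cD}^{-1}L\ast\eta$ with $LT_{\cC}^{-1}\ast\eta$ along $T_{\cD}^{-1}L\simeq LT_{\cC}^{-1}$; the paper invokes \autoref{lem:tmorphadjs} for this square.
\item At the bottom, $LT_{\cC}^{-1}$ applied to the twist square.
\end{itemize}
The top row composes to $T_{\cD}^{-1}$ applied to $(\epsilon\ast L)(L\ast\eta)\simeq\id$, which forces $P\simeq 0$. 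Hence the central vertical composite $L\to LT_{\cC}^{-1}RL\to L$ is invertible. That suffices: if both triangle composites are invertible in the homotopy $2$-category, one already has an adjunction, and adjunctions in an $(\infty,2)$-category are detected there. So the ``main obstacle'' you identify (coherence, signs and shifts of connecting maps) does not arise. There is also no need to pass to the Dold--Kan model of \autoref{introthm:C}, since the whole argument takes place in an arbitrary locally stable $2$-category.
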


\subsection*{Relation to the literature and future directions}

The $(\infty,2)$-category $\Sph(\St)$ has also been considered in \cite{gammage2022perverse} in the context of $3d$-Mirror symmetry. As argued in \cite{gammage2022perverse} perverse schobers may form a candidate for the $(\infty,2)$-category of boundary conditions in a $3d$-A-model on a cotangent stack. In particular, they propose that the $(\infty,2)$-category of spherical adjunctions forms a mathematical candidate for the $(\infty,2)$-category of boundary conditions of the $3d$-A-model on $T^{\ast}(\bC)$ associated to the Lagrangian $\bL\coloneq\bC\cup T^{\ast}_{0}\bC$ \cite[Sect.0.2]{gammage2022perverse}. 

In \emph{loc.cit} the authors showed that the adjunctions $\gr\vdash \triv$ and $\triv[-1]\langle 1\rangle\vdash \gr$ corepresents the functor that maps a spherical adjunction to the source and target of the left adjoint functor and utilized this result to express $\Sph(\St)$ as a category of modules over certain monoidal stable $\infty$-category. Although some of our results can be formally deduced from those in \cite{gammage2022perverse}, we would like to point out that \autoref{introthm:sphintro} provides a canonical construction of the spherical adjunctions that represent the (co)evaluation functors, while the choices in \emph{loc. cit.} are only aposteriori justified. We expect that our approach can be applied to provide modular description of $(\infty,2)$-categories of perverse schobers on more general stratified spaces.

One of the main results in \cite{gammage2022perverse} (see Theorem A, loc.cit), exploits the modular description of $\Sph(\St)$ to show that this $(\infty,2)$-category is equivalent to a full sub-$(\infty,2)$-category of $2\!\on{Coh}(\bA^{1}/\bG_{m})$. We have been informed by Justin Hilburn in private conversation that the $(\infty,2)$-category $\Adj(\St)$ can be equivalently identified with a full sub-$(\infty,2)$-category of the $(\infty,2)$-category $2\!\on{Coh}(\bA^{1}/\bA^{1})$\footnote{Here we are viewing the affine line $\bA^{1}$ as a multiplicative monoid}. We would also like to point out that Hilburn presented in \cite{hilburn}, a similar description of the $(\infty,2)$-category $\Sigma_2^{\infty}\Adj$ using different methods. 

It would be an interesting question to provide a geometric description of the adjunctions in Theorem~\ref{introthm:sphintro} in terms of $2$-coherent sheaves. Moreover, the adjunctions in Theorem~\ref{introthm:sphintro} can be naturally combined to form the open part of a categorified recollement. Another open question which we hope to address in future work is to compute the closed part and provide an interpretation of this categorified recollement in terms of higher sheaf theory.

\subsection*{Acknowledgements}
We thank the Max Planck Institute for Mathematics in Bonn, where this work was carried out, for its hospitality and support. We would like to thank Justin Hilburn for helpful conversations regarding the sheaf theoretical interpretations of the $(\infty,2)$-category of stable adjunctions.
\newpage

\subsection*{Notation and background}
In this subsection we gather some notation. Let us point out that from now on we simply refer to $(\infty,n)$-categories as $n$-categories and we will explicitly state when the $n$-category in question is strict.

\noindent\textbf{General notational conventions.}
\begin{itemize}
	\item For a $2$-category $\bA$, we denote by $\bA(-,-) \colon \bA^{\op} \times \bA \to \Cat$ the corresponding mapping category functor.
	\item We denote by $\St$ the $2$-category of stable categories equipped with the biexact symmetric monoidal structure.
	\item We denote by $\Sp$ the category of spectra.
	\item We denote by $\fSp\coloneq \Fun(\bZ,\Sp)$ the category of filtered spectra. A filtered spectrum $(X_{\bullet},f_{\bullet})$ can be visualized as a sequence of spectra
	\[
	\begin{tikzcd}
		\dots \arrow[r] & X_{-2} \arrow[r,"f_{-2}"] & X_{-1} \arrow[r,"f_{-1}"] & X_{0} \arrow[r,"f_{0}"] & X_{1} \arrow[r,"f_{1}"] & X_{2} \arrow[r,"f_{2}"] & \dots
	\end{tikzcd}.
	\] 
	This category admits a shift of functor denoted $\langle 1 \rangle$ s.th for a filtered spectrum $(X_{\bullet},f_{\bullet})$, we have $(X_{\bullet},f_{\bullet})\langle 1 \rangle= (X_{\bullet+1},f_{\bullet+1})$.
	\item We denote by $\gSp\coloneq \Fun(\bZ^{\simeq},\Sp)$ the category of graded spectra. We abuse notation and also denote the corresponding shift of grading functor by $\langle 1 \rangle$.
	\item We denote by $\gr:\fSp\rightarrow \gSp$ the associated graded functor given by $\gr(X_{\bullet},f_{\bullet})_{\bullet}= \cof(f_{\bullet-1})$ and by $\triv:\gSp\rightarrow \fSp$ the functor that considers any graded spectrum as a filtered spectrum with $0$ morphisms. 
	\item We will denote by $\fSp_{\geq 0}$ and $\gSp_{\geq 0}$ the categories of positively filtered and graded spectra equipped with their corresponding versions of $\gr_{\geq 0}$ and $\triv_{\geq 0}$ (and similarly for the negatively graded versions).
	\item We will write $\Delta_{+}$ for the augmented simplex category and $\Delta_{\infty}$ for the subcategory of the simplex category $\Delta$ containing only the endpoint preserving maps. These participate in an adjunction $t:\Delta_{+}\leftrightarrows\Delta_{\infty}:\iota$. Here, the functor $t$ adds a top element and the functor $\iota$ denotes the inclusion.

	\item We will write $\Adj$ for the walking adjunction originally studied by Schanuel-Street \cite{schanuel1986free} and further investigated in the homotopy-coherent setting by Riehl-Verity \cite{riehl2016homotopy}. The strict $2$-category $\Adj$ can graphically be depicted as follows
		\[
\begin{tikzcd}
    {+} \arrow[loop left, distance=3em, start anchor={[yshift=-1ex]west}, end anchor={[yshift=1ex]west},"\Delta_{+}"] \arrow[rr,bend left, "\Delta_{\infty}^{\op}"] & & - \arrow[ll, bend left, "\Delta_{\infty}"] \arrow[loop right, , distance=3em, start anchor={[yshift=1ex]east}, end anchor={[yshift=-1ex]east}, "\Delta_{+}^{\op}"] 
\end{tikzcd}
	\]
	We refer the reader to \cite{riehl2016homotopy} for a definition of $\Adj$.
	
\end{itemize}

\noindent\textbf{Locally stable $2$-categories.}
\begin{itemize}
	\item We will say that a $2$-category is locally stable if the mapping category functor factors through $\St$.
	\item We will say that a functor between locally stable $2$-categories is locally exact if the induce functor on mapping categories is an exact functor. For $\bA,\bC$ locally stable, we will denote by $\FUNst(\bA,\bC)$ the corresponding $2$-category of locally exact functors.
	\item $\Sigma_0^{\infty} \colon \cS \to \Sp$ denotes the unpointed infinite suspension functor.
	\item $\Sigma_1^{\infty}:\Cat \rightarrow \St$ denotes the free stable category functor that maps a category $\cC$ to the smallest stable subcategory $\Fun(\cC^{\op},\Sp)$ containing the stabilized representable functors $\Sigma_0^{\infty} \cC(-,c) \colon \cC^\op \to \Sp$ for every $c \in \cC$.
	\item $\Sigma_2^{\infty}:\Catt \rightarrow \SCatt$ denotes the homwise stabilization functor that applies the functor $\Sigma_{1}^{\infty}$ to the mapping categories. This forms a left adjoint to the inclusion $\iota:\SCatt\hookrightarrow \Catt$. 
\end{itemize}

\newpage

\section{The sphericalization construction}\label{sec:the_sphericalization_construction}
Throughout this section, let $\bA$ be a locally stable $2$-category. Recall that to any adjunction $L\colon \cC \leftrightarrows \cD \colon R$ in $\bA$, we can associate endofunctors $\on{T}_{\cC}$ and $\on{T}_{\cD}$, defined by the bicartesian squares
\begin{equation}\label{eq:twctw}
    \begin{tikzcd}
        \id_{\cC} \arrow[r,"\eta"] \arrow[d] & RL \arrow[d,"t_{w}"] \\
        {0} \arrow[r] & \on{T}_{\cC},
    \end{tikzcd}\enspace \enspace \enspace \enspace \enspace \enspace \enspace \enspace 
    \begin{tikzcd}
        \on{T}_{\cD} \arrow[r] \arrow[d,"c_{w}"] & {0} \arrow[d] \\
        LR \arrow[r,"\epsilon"] & \id_{\cD}{.}
    \end{tikzcd}
\end{equation}
These are called the \emph{twist} and \emph{cotwist} functors respectively.

\begin{defn}
An adjunction $L:\cC \leftrightarrows \cD:R$ in $\bA$ is called \emph{spherical} if the twist and cotwist functors are equivalences. We denote by $\Sph(\bA)\subset \Fun(\Adj,\bA)$ the full sub-$2$-category spanned by spherical adjunctions. 
\end{defn}

The goal of this section is to construct explicit left and right adjoints to the inclusion $\Sph(\bA)\subset \Fun(\Adj,\bA)$. To this end, we commence by observing that we have commutative diagrams consisting of bicartesian squares

\begin{equation}\label{eq:counitsquares}
    \begin{tikzcd}
    0 & {T_{\cD}L} & 0 && 0 & {RT_{\cD}} & 0 \\
    L & LRL & L && R & RLR & R \\
    0 & {LT_{\cC}} & 0 && 0 & {T_{\cC}R} & 0
    \arrow[from=1-1, to=1-2]
    \arrow[from=1-1, to=2-1]
    \arrow[from=1-2, to=1-3]
    \arrow[from=1-2, to=2-2, "{c_{w}*L}"]
    \arrow[from=1-3, to=2-3]
    \arrow[from=1-5, to=1-6]
    \arrow[from=1-5, to=2-5]
    \arrow[from=1-6, to=1-7]
    \arrow[from=1-6, to=2-6, "{R*c_{w}}"]
    \arrow[from=1-7, to=2-7]
    \arrow["{L*\eta}", from=2-1, to=2-2]
    \arrow[from=2-1, to=3-1]
    \arrow["{\epsilon*L}", from=2-2, to=2-3]
    \arrow[from=2-2, to=3-2,"{L*{t_{w}}}"]
    \arrow[from=2-3, to=3-3]
    \arrow["{\eta*R}", from=2-5, to=2-6]
    \arrow[from=2-5, to=3-5]
    \arrow["{R*\epsilon}", from=2-6, to=2-7]
    \arrow[from=2-6, to=3-6, "{t_{w}*R}"]
    \arrow[from=2-7, to=3-7]
    \arrow[from=3-1, to=3-2]
    \arrow[from=3-2, to=3-3]
    \arrow[from=3-5, to=3-6]
    \arrow[from=3-6, to=3-7]
\end{tikzcd}
\end{equation}

yielding equivalences $\alpha \colon T_{\cD}L \xrightarrow{\simeq}LT_{\cC}$ and $\beta \colon RT_{\cD}\xrightarrow{\simeq}T_{\cC}R$. In particular it follows that we have a commutative diagram
\begin{equation}\label{eq:square}
    \begin{tikzcd}
    \cC & \cC \\
    \cD & \cD
    \arrow["{\on{T}_{\cC}}", from=1-1, to=1-2]
    \arrow["L"', from=1-1, to=2-1]
    \arrow["L", from=1-2, to=2-2]
    \arrow["{\on{T}_{\cD}}"', from=2-1, to=2-2]
\end{tikzcd}
\end{equation}
\begin{lem}\label{lem:tmorphadjs}
    The commutative square \cref{eq:square} is vertically right adjointable and thus defines a morphism in $\Adj(\bA)$.
\end{lem}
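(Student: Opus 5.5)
To prove the lemma, we must show that the mate of the invertible 2-cell $\alpha\colon T_{\cD}L \xrightarrow{\simeq} LT_{\cC}$, namely
\[
\beta' \colon T_{\cC}R \xrightarrow{\eta} RLT_{\cC}R \xrightarrow{R\alpha^{-1}R} RT_{\cD}LR \xrightarrow{RT_{\cD}\epsilon} RT_{\cD},
\]
is an equivalence. The natural candidate for its inverse is the map $\beta\colon RT_{\cD}\xrightarrow{\simeq}T_{\cC}R$ coming from the right half of \cref{eq:counitsquares}. We will show that $\beta'\circ\beta$ and $\beta\circ\beta'$ are the identity. Alternatively, it suffices to show that $\beta'\circ\beta$ is homotopic to the identity, since $\beta$ is already known to be invertible.

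The key tool is that every operation involved is exact, because $\bA$ is locally stable and whiskering is exact. Hence the mate construction commutes with taking fibers and cofibers of the defining squares. Concretely:

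\begin{enumerate}
\item Write $\alpha$ as the comparison map obtained by whiskering the bicartesian squares \cref{eq:twctw} with $L$. The row $L\to LRL\to L$ is split by the triangle identity $(\epsilon L)(L\eta)=\id$, and $\alpha$ is the composite $T_{\cD}L\to LRL\to LT_{\cC}$, i.e.\ $(Lt_w)(c_wL)$. Similarly $\beta=(t_wR)(Rc_w)$.
\item Compute the mate of $\alpha = (Lt_w)(c_wL)$ termwise. The mate of $c_wL$ (viewed as a 2-cell $T_{\cD}L\Rightarrow L\circ RL$) is a map $RL\,R\to RT_{\cD}$, and the mate of $Lt_w$ is computed likewise. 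Naturality of $\eta,\epsilon$ together with the triangle identities reduces $\beta'^{-1}$, or rather the relevant composites, to expressions in $Rc_w$, $t_wR$ and the maps $\eta R$, $R\epsilon$.
\item Rather than inverting $\alpha$ by hand, use its characterization as the unique map compatible with the fiber/cofiber sequences $T_{\cD}L\to LRL\to L$ and $L\to LRL\to LT_{\cC}$. Its mate is then the unique map $T_{\cC}R\to RT_{\cD}$ compatible with the mated sequences. These mated sequences are exactly the right-hand rows and columns of \cref{eq:counitsquares}: one uses that the mate of $\epsilon L$ is $R\epsilon$ and the mate of $L\eta$ is $\eta R$, by the triangle identities.
\item From step 3, $\beta'$ and $\beta^{-1}$ are both fillers of the same bicartesian diagram in the stable category $\bA(\cD,\cC)$. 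They therefore agree, so $\beta'$ is an equivalence.
\end{enumerate}

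The main obstacle is step 3: making rigorous that the mate correspondence $\bA(\cC,\cD)(T_{\cD}L, LT_{\cC}) \simeq \bA(\cD,\cC)(T_{\cC}R, RT_{\cD})$ transports the bicartesian diagram on the left of \cref{eq:counitsquares} to the one on the right, coherently rather than just on objects. I would handle this by exhibiting the mate bijection as a composite of exact functors, $R\circ - \circ R$ followed by pre- and post-composition with $\eta$ and $\epsilon$. Under this composite the whole $3\times 3$ diagram is sent to a diagram whose relevant pieces, by the triangle identities, are identified with the right-hand diagram. The identification then follows because fillers of bicartesian squares in a stable category are unique up to contractible choice.
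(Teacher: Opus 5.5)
Your route is different from the paper's, and its central claim is false. The mate $\beta'$ of $\alpha^{-1}$ is not $\beta^{-1}$; it is $-\beta^{-1}$. So neither your target ``$\beta'\circ\beta\simeq\id$'' nor the conclusion ``they therefore agree'' in step 4 can be proved.

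To see the sign, work in the homotopy categories of the stable mapping categories, where whiskering is additive.

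\begin{itemize}
\item Let $r\colon LRL\to T_{\cD}L$ be the projection for the splitting $LRL\simeq L\oplus T_{\cD}L$ given by $(L\eta,c_{w}L)$. Then $\alpha^{-1}\circ Lt_{w}=r$ and $c_{w}L\circ r=\id-L\eta\circ\epsilon L$.
\item Naturality of $\eta$ gives $\beta'\circ\beta=RT_{\cD}\epsilon\circ RrR\circ\eta RLR\circ Rc_{w}$.
\item Naturality of $c_{w}$, together with $LR\epsilon\circ L\eta R=\id$, gives $c_{w}\circ T_{\cD}\epsilon\circ rR=LR\epsilon-\epsilon LR$.
\item Using also $R\epsilon\circ\eta R=\id$ and $\epsilon\circ c_{w}\simeq 0$, one gets
\[
Rc_{w}\circ\beta'\circ\beta=(RLR\epsilon-R\epsilon LR)\circ\eta RLR\circ Rc_{w}=(\eta R\circ R\epsilon-\id)\circ Rc_{w}=-Rc_{w}.
\]
\item Since $Rc_{w}$ is a split monomorphism, $\beta'\circ\beta\simeq-\id_{RT_{\cD}}$.
\end{itemize}

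As a sanity check, take $L=-\otimes V$ and $R=-\otimes V^{*}$ on $D(k)$ with $\dim V\geq 2$. Everything then sits in degree $0$, and $\beta'\beta=-\id$ can be verified by hand.

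Consequently the transport in step 3 cannot work as you describe it. Whatever precise meaning you give to ``the mate of $\epsilon L$ is $R\epsilon$'' and ``the mate of $L\eta$ is $\eta R$'', the mated diagram matches the right half of \eqref{eq:counitsquares} only up to a sign on one edge: $t_{w}R\circ Rc_{w}\circ\beta'\simeq-\id_{T_{\cC}R}$. An argument by uniqueness of fillers would otherwise prove a false identity.

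A smaller point: mates exist for $2$-cells $LT_{\cC}\Rightarrow T_{\cD}L$, i.e.\ for $\alpha^{-1}$. The correspondence you wrote out of $\bA(\cC,\cD)(T_{\cD}L,LT_{\cC})$ has the wrong variance.

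The lemma itself survives, because $-\beta^{-1}$ is still an equivalence, and the computation above is a complete repair of your approach. The paper avoids the issue altogether and never identifies $\beta'$:
\begin{itemize}
\item Using only the whiskered defining squares, it shows that the fibre $P$ of $\beta'$ is also the fibre of $t_{w}T_{\cC}R\circ R\alpha R\circ RT_{\cD}c_{w}\colon RT_{\cD}T_{\cD}\to T_{\cC}T_{\cC}R$.
\item By interchange alone, this composite equals $T_{\cC}\beta\circ\beta T_{\cD}$, which is invertible.
\end{itemize}
That argument is insensitive to signs, which is exactly what your uniqueness-of-fillers heuristic is not.
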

\begin{proof}
    We must verify that the composite
    \[
        T_{\cC}R \xrightarrow{\eta*{T}_{\cC}R}RLT_{\cC}R \xrightarrow{R\alpha^{-1}R} RT_{\cD}LR \xrightarrow{RT_{\cD}*\epsilon} RT_{\cD}
    \]
    is an equivalence. Let $\phi=RT_{\cD}*c_{w}$ and $\pi=t_{w}*T_{\cC}R$. We consider the following diagram consisting of bicartesian squares
   \[\begin{tikzcd}
    P &[1.5em] {RT_{\cD}T_{\cD}} &[1.5em] {RT_{\cD}T_{\cD}} &[1.5em] 0 \\
    {T_{\cC}R} &[1.5em] {RLT_{\cC}R} &[1.5em] {RT_{\cD}LR} &[1.5em] {RT_{\cD}} \\
    0 &[1.5em] {T_{\cC}T_{\cC}R}
    \arrow[from=1-1, to=1-2]
    \arrow[from=1-1, to=2-1]
    \arrow[from=1-2, to=1-3]
    \arrow["{R\alpha R\phi}"', from=1-2, to=2-2]
    \arrow[from=1-3, to=1-4]
    \arrow["\phi"', from=1-3, to=2-3]
    \arrow[from=1-4, to=2-4]
    \arrow[from=2-1, to=2-2, "\eta\ast T_{\cC}R"]
    \arrow[from=2-1, to=3-1]
    \arrow[from=2-2, to=2-3, "R\alpha^{-1}R"]
    \arrow["\pi"', from=2-2, to=3-2]
    \arrow[from=2-3, to=2-4, "RT_{\cD}\ast \epsilon"]
    \arrow[from=3-1, to=3-2]
\end{tikzcd}\]
and we claim that $P \simeq 0$. To see this we consider another commutative diagram
\[\begin{tikzcd}
    {RT_{\cD}T_{\cD}} && {RT_{\cD}LR} && \\
    {RLRT_{\cD}} && RLRLR && {RLT_{\cC}R} \\
    {T_{\cC}RT_{\cD}} && {T_{\cC}RLR} && {T_{\cC}T_{\cC}R}
    \arrow["{RT_{\cD}*c_{w}}", from=1-1, to=1-3]
    \arrow["{R*c_{w}*T_{\cD}}"', from=1-1, to=2-1]
    \arrow["{R*c_{w}*LR}"{pos=0.3}, from=1-3, to=2-3]
    \arrow["{R\alpha R}",bend left,  from=1-3, to=2-5]
    \arrow["{RLR*c_{w}}", from=2-1, to=2-3]
    \arrow["{t_{w}*RT_{\cD}}"', from=2-1, to=3-1]
    \arrow["{RL*t_{w}*R}", from=2-3, to=2-5]
    \arrow["{t_{w}*RLR}", from=2-3, to=3-3]
    \arrow["{t_{w}*T_{\cC}R}", from=2-5, to=3-5]
    \arrow["{T_{\cC}*R*c_{w}}"', from=3-1, to=3-3]
    \arrow["{T_{\cC}*t_{w}*R}"', from=3-3, to=3-5]
\end{tikzcd}\]
and look at both outer composites: The left-most vertical map is given by $\beta*{T}_{\cD}$, and similarly, the bottom horizontal morphism is given by $T_{\cC}*\beta$. As the remaining outer composite is precisely given by $\pi\circ  R \alpha R \phi$ we conclude that $P \simeq 0$.
\end{proof}

\begin{const}\label{def:Y+Y-}
    Let $\Adj$ be the walking adjunction. Using the stabilized Yoneda embedding $\sY:\Adj \rightarrow \Fun(\Adj^{\op},\Cat)\xrightarrow{\Sigma^{\infty}_1} \Fun(\Adj^{\op},\St)$, we obtain adjunctions of stable categories given by 
\[
 \sY(+)\simeq \left( \begin{tikzcd}
\Sigma^{\infty}_{1}(\Delta_{+}) \arrow[r, bend left =30,"\Sigma^{\infty}_{1}(t)"] & \Sigma^{\infty}_{1}(\Delta_{\infty}) \arrow[l, bend left =30,"\Sigma^{\infty}_{1}(i)"] 
\end{tikzcd}\right) \quad \text{and} \quad 
\sY(-)\simeq \left(  \begin{tikzcd}
   \Sigma^{\infty}_{1}(\Delta_{\infty}^{\op}) \arrow[r, bend left =30,"\Sigma^{\infty}_{1}(i^{\op})"] & \Sigma^{\infty}_{1}(\Delta_{+}^{\op}) \arrow[l, bend left =30,"\Sigma^{\infty}_{1}(t^{\op})"] 
\end{tikzcd}\right)
\]
We further note that the generic adjunction in $\Adj$ yields an adjunction $\sY(+) \llra \sY(-)$ in $\Fun(\Adj^{\op},\St)$.
\end{const}

 \begin{prop}\label{lem:sigma2Adj}
      The locally stable 2-category $\Sigma^{\infty}_2 \Adj$ is equivalent to the full sub-2-category of $\Fun(\Adj^{\op},\St)$ on the objects $\sY(+)$ and $\sY(-)$.
  \end{prop}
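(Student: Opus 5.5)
The plan is to use the stabilized Yoneda embedding $\sY$ from \cref{def:Y+Y-} and show that it exhibits the full sub-2-category on $\sY(+),\sY(-)$ as the homwise stabilization of $\Adj$. Since $\Fun(\Adj^{\op},\St)$ is locally stable (its mapping categories are limits of stable categories along exact functors), the functor $\sY \colon \Adj \to \Fun(\Adj^{\op},\St)$ factors through the full sub-2-category $\bX$ on $\sY(+),\sY(-)$. By the adjunction $\Sigma_2^{\infty} \dashv \iota$ it therefore induces a locally exact functor $\widetilde{\sY}\colon \Sigma_2^\infty\Adj \to \bX$. This functor is bijective on objects, because both sides have objects $\{+,-\}$. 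It remains to show that $\widetilde{\sY}$ is fully faithful on mapping categories, that is, that for all $\delta,\epsilon\in\Adj$ the induced functor
\[
\Sigma_1^\infty \Adj(\delta,\epsilon) \lra \Nat\big(\sY(\delta),\sY(\epsilon)\big)
\]
is an equivalence.

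To compute the right-hand side, first use the ordinary (unstable, $2$-categorical) Yoneda lemma: $\Nat(\cY(\delta),F)\simeq F(\delta)$ for $F\colon\Adj^{\op}\to\Cat$, where $\cY$ is the unstabilized Yoneda embedding. Next, the stabilization $\Sigma_1^\infty\colon \Cat\to\St$ is left adjoint to the inclusion, and postcomposition gives an adjunction $\Fun(\Adj^{\op},\Cat)\rightleftarrows\Fun(\Adj^{\op},\St)$ whose left adjoint sends $\cY(\delta)$ to $\sY(\delta)$. Together these give
\[
\Nat\big(\sY(\delta),\sY(\epsilon)\big)\simeq \Nat\big(\cY(\delta),\iota\sY(\epsilon)\big)\simeq \sY(\epsilon)(\delta)=\Sigma_1^\infty\Adj(\delta,\epsilon).
\]
I would then check that under these identifications the map induced by $\widetilde{\sY}$ is the identity. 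This holds because the composite sends $f\in\Adj(\delta,\epsilon)$ to $\sY(f)$, which is then evaluated at $\id_\delta$. Both sides are exact in the source variable, and the stable category $\Sigma_1^\infty\Adj(\delta,\epsilon)$ is generated under finite colimits and shifts by the images of the $f$, so agreement on generators suffices.

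I expect the main obstacle to be the coherence of these steps in the $(\infty,2)$-setting. One has to ensure that postcomposition with $\Sigma_1^\infty$ really induces a $2$-categorical adjunction on functor $2$-categories, and that the enriched Yoneda lemma holds in the form used above. One must also make sure that the identification is natural in composition, so that it yields an equivalence of $2$-categories and not merely of mapping categories. I would first deal with this by viewing everything as $\St$-enriched: $\Sigma_2^\infty\Adj$ is the free $\St$-enriched category on $\Adj$, and the enriched Yoneda embedding of any $\St$-enriched category into $\St$-valued presheaves is fully faithful. Applying the enriched Yoneda lemma to $\Sigma_2^\infty\Adj$ directly, and noting that $\St$-enriched presheaves on $\Sigma_2^\infty\Adj$ are the same as $\Fun(\Adj^{\op},\St)$ by the free–forgetful adjunction, reduces the claim to the statement that this enriched Yoneda embedding is fully faithful.
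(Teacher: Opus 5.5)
Your proposal is correct and takes the same approach as the paper. You factor the stabilized Yoneda embedding through $\Sigma_2^\infty\Adj$, using that $\Fun(\Adj^{\op},\St)$ is locally stable, note essential surjectivity, and then use Yoneda to identify the map on mapping categories with the unit of $\Sigma_1^\infty \dashv \iota$ (equivalently, the identity of $\Sigma_1^\infty\Adj(\delta,\epsilon)$). Your extra coherence remarks and the enriched-Yoneda reformulation go beyond the paper's argument but are compatible with it.
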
 
  \begin{proof}
      Since $\Fun(\Adj^{\op},\St)$ is locally stable, it follows that the stabilized Yoneda embedding can be factored through $\varphi \colon \Sigma^{\infty}_2 \Adj \to \Fun(\Adj^{\op},\St)$ and factors through full sub-2-category in the statement. Since $\varphi$ is evidently essentially surjective, it only remains to show that $\varphi$ is fully-faithful. To see this, let $a,b \in \{+,-\}$ and observe that the induced map on mapping categories
      \[
          \Adj(a,b) \to \Nat_{\Adj^{\op},\St}(\sY(a),\sY(b))\simeq \sY(b)(a)\simeq \Sigma^{\infty}_1 \Adj(a,b),
      \]
      can be identified with the unit of the stabilization adjunction evaluated at $\Adj(a,b)$. This implies that $\varphi$ is fully-faithful, as desired.
  \end{proof}

\begin{const}
    Recall that by construction, we have $\FUNst(\Sigma^{\infty}_2 \Adj,\Sigma^{\infty}_2 \Adj)\simeq \Adj(\Sigma^{\infty}_2\Adj)$. We can now apply \cref{lem:tmorphadjs} to the generic adjunction in $\Sigma^{\infty}_2 \Adj$ (cf. \autoref{def:Y+Y-} and \autoref{lem:sigma2Adj} ) to obtain a morphism $[1] \to \FUNst(\Sigma^{\infty}_2 \Adj,\Sigma^{\infty}_2 \Adj)$.

   Let us observe that specifying a functor $\bX \to \FUNst(\bC,\bD)$ is equivalent to specifying a functor $\Gamma \colon \bX \times \bC \to \bD$ such that, for every $x \in \bX$, the functor $\Gamma(x,-)$ is locally exact. It follows that we obtain a functor
\begin{equation}
H \colon [1] \times \Sigma^{\infty}_2 \Adj \to \Sigma^{\infty}_2 \Adj.
\end{equation}
Precomposition along $H$ therefore induces an endonatural transformation
\begin{equation}\label{eq:genericendo}
[1] \times \Adj(\bA) \to \Adj(\bA).
\end{equation}
of the identity functor.
\end{const}

\begin{lem}\label{lem:factorsph}
    Consider the functors obtained by iterating the map in \cref{eq:genericendo},
   \[
       \Gamma \colon \Adj(\bA) \to \Fun(\mathbb{N},\Adj(\bA)), \enspace \Gamma^{\vee}\colon \Adj(\bA) \to \Fun(\mathbb{N}^{\op},\Adj(\bA)).
   \]
    For every $(L,R) \in \Adj(\bA)$ we define $\sph(L,R)=\colim_{\mathbb{N}}\Gamma(F)$ and $\cosph(L,R)=\lim_{\mathbb{N}^{\op}}\Gamma^{\vee}(L,R)$ whenever such (co)limits exist. Then it follows that $\sph(L,R)$ and $\cosph(L,R)$ are spherical adjunctions.
\end{lem}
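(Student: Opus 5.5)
Write $F=(L,R)\colon \cC\leftrightarrows\cD$ for the given adjunction. The diagram $\Gamma(F)$ is
\[
F\xrightarrow{\tau_F}F\xrightarrow{\tau_F}F\to\cdots,
\]
where $\tau_F=(\Ts_\cC,\Ts_\cD)$ is the component at $F$ of the endonatural transformation \cref{eq:genericendo} of $\id_{\Adj(\bA)}$. Write $\sph(F)=(L_\infty,R_\infty)\colon \cC_\infty\leftrightarrows\cD_\infty$. The plan is to show that the twist of $\sph(F)$ is identified with the colimit of the twists $\Ts_\cC$ along the diagram, and that this colimit is an equivalence by a shift argument.

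\textbf{Step 1 (naturality).} By construction $\tau$ is natural with respect to all morphisms of $\Adj(\bA)$. Hence for every morphism $f\colon F\to G$ in $\Adj(\bA)$ the square
\[
\tau_G\circ f\simeq f\circ\tau_F
\]
commutes. In particular, the colimit cocone maps $\iota_n\colon F\to\sph(F)$, which are morphisms of adjunctions, satisfy
\[
\tau_{\sph(F)}\circ\iota_n\simeq \iota_n\circ\tau_F .
\]
Concretely, this says that the twist $\Ts_{\cC_\infty}$ of $\sph(F)$ restricts along the $n$-th cocone map to $\Ts_\cC$ followed by the $n$-th cocone map.

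\textbf{Step 2 (the shift argument).} The endomorphism $\tau_{\sph(F)}$ is thus the map induced on the colimit by the map of diagrams $\tau_F\colon \Gamma(F)\to\Gamma(F)$, which is levelwise $\tau_F$. This levelwise map agrees with the transition maps of the diagram. A standard cofinality argument then identifies it with the map
\[
\colim_{\mathbb N}\Gamma(F)\to\colim_{\mathbb N}\Gamma(F)\circ(n\mapsto n+1)
\]
induced by the transition maps. Since $n\mapsto n+1$ is cofinal in $\mathbb N$, this map is an equivalence. Hence $\tau_{\sph(F)}$ is an equivalence in $\Adj(\bA)$. Its underlying components are therefore equivalences, so $\Ts_{\cC_\infty}$ and $\Ts_{\cD_\infty}$ are equivalences and $\sph(F)$ is spherical. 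To make "induced map on the colimit" precise, I would use the functor $H\colon[1]\times\Sigma^\infty_2\Adj\to\Sigma^\infty_2\Adj$ and its iterates.

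\textbf{Step 3 (the dual case).} The statement for $\cosph(F)$ is formally dual: replace colimits over $\mathbb N$ by limits over $\mathbb N^\op$, and use the initiality of the shift.

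\textbf{Main obstacle.} The delicate point is Step 1: justifying that the twist functor of the adjunction $\sph(F)$, computed intrinsically via \cref{eq:twctw}, agrees with the component $\tau_{\sph(F)}$. A priori the colimit is taken in $\Adj(\bA)$, and it is not obvious that it is computed pointwise in a way compatible with the unit and the cofiber defining $\Ts$. This is handled by naturality of the construction: $\tau$ is defined by precomposition with $H$, i.e.\ uniformly for all adjunctions. Its component at any adjunction, $\sph(F)$ included, is therefore by definition that adjunction's own twist/cotwist pair, and naturality in $F$ holds automatically. No pointwise computation of the colimit is needed; only its universal property in $\Adj(\bA)$ is used.
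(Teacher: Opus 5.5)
Your proposal is correct and follows the paper's proof. It identifies the twist and cotwist of $\sph(F)$ with the colimit of the levelwise (co)twists along $\Gamma(F)$, and inverts that colimit map by the shift; your cofinality of $n\mapsto n+1$ is exactly the paper's diagonal identity arrows. The one difference is how you get the first identification: you whisker the endotransformation $\tau$ of $\id_{\Adj(\bA)}$ with the colimit cocone, using that its component at every adjunction is that adjunction's own twist/cotwist pair, rather than the paper's appeal to filteredness of $\mathbb{N}$, which is cleaner. As in the paper, though, you match $\tau\ast\Gamma(F)$ with the shift only levelwise; its naturality squares are the self-naturality cells of $\tau$ at $\tau_F$, not identities, and neither argument addresses this.
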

\begin{proof}
    We deal only with the case of $\sph(L,R)$ are the remaining case is formally dual. Let $T_{\cC}$ and $T_{\cD}$ denote the twist functor and cotwist functors of $(L,R)$. Since $\mathbb{N}$ is filtered, it follows that the twist and cotwist functors of $\sph(L,R)$ can be computed as $\colim_{\mathbb{N}}T_{\cC}$ and $\colim_{\mathbb{N}}T_{\cD}$, respectively. In the case of the twist functor, one constructs the desired inverse via the dotted arrow
    \[
        \begin{tikzcd}
    \cC \arrow[r,"T_{\cC}"] \arrow[d,"T_{\cC}"] 
        & \cC \arrow[r, "T_{\cC}"]  \arrow[d,"T_{\cC}"] 
        & \cC  \arrow[d,"T_{\cC}"] \arrow[r,"T_{\cC}"] 
        & \dots \\
    \cC \arrow[r,"T_{\cC}"] \arrow[ur, swap, dotted, "\id"{pos=0.7}] 
        & \cC \arrow[r, "T_{\cC}"] \arrow[ur, swap, dotted, "\id"{pos=0.7}]
        & \cC \arrow[r,"T_{\cC}"] \arrow[ur, swap, dotted, "\id"{pos=0.7}] 
        & \dots 
\end{tikzcd}
        \]
        and similarly for the cotwist functor. 
\end{proof}

\begin{defn}
    For a locally stable $\bA$ admitting sequential (co)limits, we denote:
    \[
        \sph \colon \Adj(\bA) \xrightarrow{\Gamma} \Fun(\mathbb{N},\Adj(\bA))\xlongrightarrow{\colim} \Sph(\bA)\subset \Adj(\bA),  
    \]
    \[
        \cosph \colon \Adj(\bA) \xrightarrow{\Gamma^{\vee}} \Fun(\mathbb{N}^{\op},\Adj(\bA))\xlongrightarrow{\lim} \Sph(\bA)\subset \Adj(\bA),
    \]
    where we are implicitly using \cref{lem:factorsph}.
\end{defn}

 \begin{thm}\label{thm:sphericalize}
        The functors $\sph$ and $\cosph$ form adjunctions
        \[
        \begin{tikzcd}
            \Adj(\bA) \arrow[r,"\sph(-)", bend left] \arrow[r, swap, "\cosph(-)", bend right] & \Sph(\bA) \arrow[l,swap]
        \end{tikzcd}
        \]
    \end{thm}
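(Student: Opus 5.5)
The plan is to prove that $\sph$ is left adjoint to the inclusion $\Sph(\bA)\subset\Adj(\bA)$ by exhibiting a unit and checking the universal property on mapping categories. The statement for $\cosph$ is formally dual: reverse all arrows, replace $\mathbb{N}$ by $\mathbb{N}^{\op}$ and colimits by limits. Write $\tau\colon[1]\times\Adj(\bA)\to\Adj(\bA)$ for the endonatural transformation of the identity from \cref{eq:genericendo}. Its component at an adjunction $X=(L,R)$ is the morphism $\tau_X\colon X\to X$ of \cref{lem:tmorphadjs}, whose underlying square is \cref{eq:square}. By construction, $\Gamma(X)$ is the sequential diagram $X\xrightarrow{\tau_X}X\xrightarrow{\tau_X}X\to\cdots$. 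The candidate unit $u_X\colon X\to\sph(X)$ is the colimit inclusion of the $0$-th term, and it is natural in $X$ because $\Gamma$ is a functor.

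\textbf{Step 1: universal property.} Let $Y$ be a spherical adjunction. Since $\sph(X)$ is a colimit in the $2$-category $\Adj(\bA)$ (computed pointwise in $\Fun(\Adj,\bA)$, using that $\bA$ has sequential colimits in the enriched sense), there is an equivalence of mapping categories
\[
\Adj(\bA)(\sph(X),Y)\simeq \lim_{\mathbb{N}^{\op}}\Bigl(\cdots\xrightarrow{\tau_X^*}\Adj(\bA)(X,Y)\xrightarrow{\tau_X^*}\Adj(\bA)(X,Y)\Bigr),
\]
and under this equivalence $u_X^*$ is the projection to the $0$-th term. It therefore suffices to show that $\tau_X^*$ is an equivalence of categories. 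Naturality of $\tau$ gives, for each $f\colon X\to Y$, a coherent identification $f\circ\tau_X\simeq\tau_Y\circ f$. Making this coherent in $f$ yields a natural equivalence of functors $\tau_X^*\simeq(\tau_Y)_*$ on $\Adj(\bA)(X,Y)$. I would extract this by restricting $\tau$ to the mapping category $\Adj(\bA)(X,Y)$, i.e.\ by using that an endotransformation of the identity of a $2$-category induces a natural transformation $[1]\times\Adj(\bA)(X,Y)\to\Adj(\bA)(X,Y)$ from $\tau_X^*$ to $(\tau_Y)_*$, which is invertible because $\tau$ is a $2$-natural (not lax) transformation.

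\textbf{Step 2: $\tau_Y$ is invertible.} Because $Y$ is spherical, the components $T_{\cC}$ and $T_{\cD}$ of $\tau_Y$ are equivalences. A morphism in $\Adj(\bA)\subset\Fun(\Adj,\bA)$ is an equivalence as soon as its components at the objects $+,-$ are. Hence $\tau_Y$ is an equivalence, so $(\tau_Y)_*$, and thus $\tau_X^*$, is an equivalence. A sequential limit of equivalences is equivalent to its $0$-th term, so $u_X^*\colon\Adj(\bA)(\sph(X),Y)\to\Adj(\bA)(X,Y)$ is an equivalence. Together with \cref{lem:factorsph}, which guarantees that $\sph(X)$ lies in $\Sph(\bA)$, this is exactly the statement that $\sph$ is left adjoint to the inclusion with unit $u$. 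As a consistency check, for spherical $X$ the diagram $\Gamma(X)$ consists of equivalences, so $u_X$ is an equivalence; thus the counit is invertible and the inclusion is fully faithful.

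\textbf{Main obstacle.} The delicate point is the coherent identification $\tau_X^*\simeq(\tau_Y)_*$ in Step 1 at the level of $(\infty,2)$-categories, together with the claim that the colimit of $\Gamma(X)$ is a genuine $2$-categorical (weighted/enriched) colimit. I would handle the first by currying $\tau$ into a functor $[1]\to\Fun(\Adj(\bA),\Adj(\bA))$ and evaluating on $\Adj(\bA)(X,Y)$ via the functoriality of mapping categories. For the second, I would use that colimits in $\Fun(\Adj,\bA)$ are computed objectwise, and that $\Adj(\bA)$ is closed under them since it is the full subcategory $\Fun^{\on{st}}(\Sigma^\infty_2\Adj,\bA)$.
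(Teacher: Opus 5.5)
Your proposal is correct and follows the paper's own proof. Both arguments identify $\Adj(\bA)(\sph(X),Y)$ with the sequential limit of $\Adj(\bA)(X,Y)$ along precomposition with $\tau_X$, use naturality to rewrite this as postcomposition with $\tau_Y$, and conclude that this map is invertible because $Y$ is spherical. Your extra remarks (naturality of the unit, detecting equivalences componentwise) spell out what the paper leaves implicit, and no closure argument is needed since $\Adj(\bA)$ is all of $\Fun(\Adj,\bA)\simeq\FUNst(\Sigma^{\infty}_2\Adj,\bA)$.
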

    \begin{proof}
        We show the claim for $\sph$ as the other one is formally dual. Denote by $\phi:(L,R)\hookrightarrow \sph(L,R)$ the canonical inclusion into the colimit. It suffices to show that for every spherical adjunction $(S,S^{R})$ , precomposition with $\phi$ induces an equivalence 
        \[
        \Adj(\bA)(\sph(L,R),(S,S^{R}))\rightarrow \Adj(\bA)((L,R),(S,S^{R}))
        \]
        Let $L \colon \cC \to \cD$ and $S \colon \cA \to \cB$. Then we can identify the source of the morphism above as the limit of the diagram
        \[
           \lim_{\mathbb{N}^{\op}}\left( \Adj(\bA)((L,R),(S,S^{R})) \xrightarrow{T^*} \Adj(\bA)((L,R),(S,S^{R})) \to  \cdots  \right)
        \]
        where $T^*$ is given by precomposition along the map given by $(T_{\cC},T_{\cD})$. However, by naturality this is equivalent to postcomposition with $(T_{\cA},T_{\cB})$ which is an equivalence as $(S,S^{R})$ is spherical. The result now follows.
 \end{proof}

\section{The Walking Spherical adjunction}
In this section, we apply our previous results to define a locally stable 2-category $\Sph$  equipped with a functor 
\[
     \pi \colon \Sigma^{\infty}_2 \Adj \to \Sph
\]
which classifies spherical adjunctions (see \autoref{thm:unipropsph}). Later on, we will unravel the definition of $\Sph$ to obtain precise descriptions of its mapping categories.

\begin{defn}\label{def:wSph}
    We define $\Sph$ as the full sub $2$-category of $\Fun(\Adj^{\op},\St)$ generated by $\mathfrak{S}\sY(+)$ and $\mathfrak{S}\sY(-)$. We define $\cL \subseteq \Fun(\Adj^{\op},\St)$ to be the full sub-2-category spanned by $\Sigma^{\infty}_2 \Adj$ and $\Sph$ (see \autoref{lem:sigma2Adj} and \autoref{def:wSph}).
\end{defn}

We observe that the composite $\Sigma^{\infty}_2 \Adj \to \Fun(\Adj^{\op},\St) \xrightarrow{\sph} \Fun(\Adj^{\op},\St)$ factors through $\Sph$ thus obtaining a functor
\begin{equation}\label{eq:projsph}
     \pi \colon \Sigma^{\infty}_2 \Adj \to \Sph.
\end{equation}

\begin{lem}\label{lem:cocompletions}
    Let $j \colon \Sigma^{\infty}_2 \Adj \to \cL$ be the obvious inclusion, $\bA$ cocomplete and locally stable, and consider the left Kan extension adjunction 
     \[
      j_{!} \colon  \Fun(\Sigma^{\infty}_2 \Adj,\bA)     \llra    \Fun(\cL, \bA)\colon j^*.
    \]
    Then it follows that $j_!$ restricts to the full sub-2-categories on locally exact functors, is fully-faithful and its essential image is given by those $F \colon \cL \to \bA$ which are cocontinuous. 
\end{lem}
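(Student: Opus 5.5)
The plan is to compute $j_!$ with the pointwise formula for $\Cat$-weighted left Kan extensions, and to show that the weights involved are sequential colimits of representables. Write $\cL$ with objects $\sY(\pm)$ and $\sph\sY(\pm)$.

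First, $j$ is fully faithful by construction, since $\cL$ is a full sub-2-category of $\Fun(\Adj^{\op},\St)$ containing $\Sigma^\infty_2\Adj$ by \autoref{lem:sigma2Adj}. The standard argument then shows that the unit $\id\to j^*j_!$ is an equivalence: the pointwise formula evaluated at an object $j(x)$ has weight $\cL(j(-),j(x))\simeq \Sigma^\infty_2\Adj(-,x)$, which is representable, so the weighted colimit is $F(x)$ by the enriched Yoneda lemma. Hence $j_!$ is fully faithful on all of $\Fun(\Sigma^\infty_2\Adj,\bA)$, wherever it is defined.

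Next, I would identify the weight at a new object $\sph\sY(a)$. By definition, $\sph\sY(a)=\colim_{\mathbb N}\big(\sY(a)\xrightarrow{T}\sY(a)\xrightarrow{T}\cdots\big)$, computed in $\Fun(\Adj^{\op},\St)$, hence pointwise. By the (stable) Yoneda lemma, for $b\in\{+,-\}$ we have
\[
\cL(\sY(b),\sph\sY(a))\simeq \sph\sY(a)(b)\simeq \colim_{\mathbb N}\sY(a)(b)\simeq \colim_{\mathbb N}\cL(\sY(b),\sY(a)).
\]
Here the colimit is a sequential, hence filtered, colimit in $\St$ along exact functors. Such a colimit is computed in $\Cat$, so the weight $\cL(j(-),\sph\sY(a))$ is the sequential colimit in $\Fun((\Sigma^\infty_2\Adj)^{\op},\Cat)$ of the representables $\cL(j(-),\sY(a))$ along the twist. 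I expect this step to be the main obstacle: one must check that this identification is natural in $b$ and compatible with the 2-functoriality of the weight, i.e.\ that it really is an equivalence of weights and not just of their values. I would handle it by viewing the whole diagram $\mathbb N\to\Fun(\Adj^{\op},\St)$ through the Yoneda embedding at once.

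Since weighted colimits are cocontinuous in the weight, and colimits weighted by representables are evaluations, we obtain
\[
j_!F(\sph\sY(a))\simeq\colim_{\mathbb N}\big(F\sY(a)\xrightarrow{F(T)}F\sY(a)\to\cdots\big).
\]
In particular $j_!F$ sends the defining colimit diagrams of $\sph\sY(\pm)$ to colimits; this is what ``cocontinuous'' means here. Local exactness follows from the same description. Each mapping functor of $j_!F$ out of or into $\sph\sY(a)$ is, up to equivalence, a sequential colimit or limit of the mapping functors of $F$. These are exact, and sequential colimits and limits of exact functors between stable categories along exact transition maps are again exact. Hence $j_!$ restricts to locally exact functors.

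Finally, for the essential image, let $G\colon\cL\to\bA$ be locally exact and cocontinuous. Consider the counit $j_!j^*G\to G$. On the objects $\sY(\pm)$ it is an equivalence by the first step. On $\sph\sY(a)$ it is the canonical comparison $\colim_{\mathbb N}G\sY(a)\to G(\sph\sY(a))$, which is an equivalence precisely because $G$ preserves the defining colimit. Thus $G\simeq j_!j^*G$. Conversely, every $j_!F$ is cocontinuous by the previous step, so the essential image is exactly the cocontinuous locally exact functors.
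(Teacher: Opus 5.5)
Your proposal is correct and follows the same route as the paper. The shared steps are: the pointwise weighted-colimit formula for $j_!$; identifying the weight $\cL(j(-),\sph\sY(a))$, via the stabilized Yoneda lemma, with the sequential colimit of the representables $\cL(j(-),\sY(a))$ along the twist; deducing cocontinuity and the essential image from this; and checking local exactness separately for mapping categories into $\sph\sY(a)$ (a filtered colimit) and out of $\sph\sY(a)$ (a sequential limit).

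There are two small differences. First, you prove directly the full faithfulness and essential-image statements that the paper imports by citation. Second, for maps $\sY(b)\to\sph\sY(a)$ the target $\bA(F\sY(b),\colim_{\mathbb{N}}F\sY(a))$ is not itself a colimit of mapping categories of $F$, so exactness should be argued as the paper does: every finite diagram in $\colim_{\mathbb{N}}\cL(\sY(b),\sY(a))$ factors through a finite stage, and there the functor is exact because $F$ is locally exact and composition in $\bA$ is exact.
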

\begin{proof}
    The existence of the adjunction in the statement follows from \cite[Corollary 5.6.6]{AHM26}. Moreover, $j_!$ is fully-faithful by \cite[Observation 5.6.8]{AHM26}. Let us now show that $j_! \scr{F}$ is cocontinuous. By the point-wise formula for left Kan extensions in terms of weighted colimits we have that $j_!\scr{F}(\sph \sY(\epsilon))\simeq \colim_{\Sigma^{\infty}_2 \Adj}^{\cL(j(-),\sph \sY(\epsilon))}\scr{F}$. Let $\cQ \colon \Adj^\op \to \St$ denote the restriction of $\cL(j(-),\sph \sY(\epsilon))$ to $\Adj^\op$. Then we have natural equivalences
    \[
        \cQ \simeq \Nat_{\Adj^\op,\St}(Y(-), \sph \sY(\epsilon)) \simeq \sph \sY(\epsilon)
    \]
    by the universal property of the stabilized Yoneda embedding and by the strong form of Yoneda's lemma given in \cite[Corollary 5.8.5]{AHM26}. It follows that our weight can be expressed as a filtered colimit of weights given by representable functors.

    The argument given in \cite[Proposition 5.8.8]{AHM26} now shows that $j_! \scr{F}$ is cocontinuous and that the essential image of the left Kan extension consists precisely in such functors.

    We wish to finally show that $j_!$ restricts to the full sub-2-categories on locally exact functors. Let $\scr{F}=j_! \scr{F}_0$ and fix $\Phi_0,\Phi_1 \in \cL$. Our goal is to show that the map
    \[
        \cL(\Phi_0,\Phi_1) \xrightarrow{\scr{F}} \bA(\scr{F}(\Phi_0),\scr{F}(\Phi_1)),
    \]
    preserves finite (co)limits. We will treat two separate cases:
    \begin{itemize}
        \item We have that $\Phi_0=\sY(\epsilon)$. In this case we may assume that $\Phi_1 =\sph \sY(\delta)$ since otherwise the claim follows as $\scr{F}_0$ is locally exact. Note that in this case $ \cL(\Phi_0,\Phi_1) \simeq \colim_{\mathbb{N}}\cL(\Phi_0,\sY (\delta))$, in particular any finite diagram must factor through a certain step in the filtered colimit which shows the claim.
        \item We have that $\Phi_0= \sph \sY(\epsilon)$. In this case, since $\scr{F}$ is cocontinuous we obtain 
        \[
            \lim_{\mathbb{N}^{\op}}\cL(\sY (\epsilon),\Phi_1) \to  \lim_{\mathbb{N}^{\op}} \bA(\scr{F}(\sY(\epsilon)), \scr{F}(\Phi_1)),
        \]
        which allows us to reduce to the case where $\Phi_0= \sY(\epsilon)$. \qedhere
    \end{itemize}
\end{proof}

\begin{thm}\label{thm:unipropsph}
    Let $\bA$ be a locally stable 2-category. Then restriction along $\pi$ induces a fully-faithful functor $\FUNst(\Sph,\bA) \to \Adj(\bA)$ with essential image given by $\Sph(\bA)$. In particular, $\Sph$ corepresents the functor $\Sph(-)$ sending a locally stable 2-category to its 2-category of spherical adjunctions.
\end{thm}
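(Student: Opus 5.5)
The plan is to reduce the statement to \cref{lem:cocompletions} and \cref{thm:sphericalize} by passing through a sufficiently cocomplete ambient target. First, I would reduce to the case where $\bA$ is cocomplete. I embed $\bA$ fully faithfully and locally exactly into a cocomplete locally stable $2$-category $\widehat{\bA}$, for instance via the Yoneda embedding into $\FUNst(\bA^{\op},\St)$ or a suitable presentable enlargement. Since spherical adjunctions are detected on mapping categories, an adjunction in $\bA$ is spherical if and only if its image in $\widehat{\bA}$ is. A functor $\Sph\to\widehat{\bA}$ whose restriction along $\pi$ lands in $\bA$ on objects factors through $\bA$, because the objects of $\Sph$ are the images of $\pm$. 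It therefore suffices to treat cocomplete $\bA$.

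For cocomplete $\bA$, I would factor $\pi$ as $\Sigma^{\infty}_2\Adj\xrightarrow{j}\cL\leftarrow\Sph$, with the second map the inclusion $k$. By \cref{lem:cocompletions}, $j_!$ identifies $\FUNst(\Sigma^{\infty}_2\Adj,\bA)=\Adj(\bA)$ with the locally exact cocontinuous functors $\cL\to\bA$. Inside $\cL$, each object $\sph\sY(\epsilon)$ is the sequential colimit of the $\Gamma$-tower of $\sY(\epsilon)$, and these are the colimits that cocontinuity refers to.

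Next, I would identify $k^*j_!$ with $\sph$. For an adjunction $F$ in $\bA$, the restriction $k^*j_!F$ sends the generic adjunction of $\Sph$ to $\colim_{\mathbb{N}}F(\Gamma\sY)$. By naturality of the endotransformation \cref{eq:genericendo} under precomposition, this is $\sph(F)$. So $\pi^*$ restricted to cocontinuous extensions is computed as follows: restricting to $\Sph$ and then back along $\pi$ gives $F\mapsto \pi^*k^*j_!F$. I then show that any locally exact $G\colon\Sph\to\bA$ extends uniquely to a cocontinuous locally exact $\tilde G$ on $\cL$, with $\tilde G=j_!j^*\tilde G$ and $j^*\tilde G=G\pi$. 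Using the natural map $\id\to\pi$ (the colimit inclusion) and \cref{thm:sphericalize}, the adjunction $G\pi$ is already spherical, because it is the image of the spherical generic object. Hence $\sph(G\pi)\simeq G\pi$ and $\phi$ is an equivalence on it. This yields that $k^*j_!$ restricted to $\Sph(\bA)$ is inverse to $\pi^*$.

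Concretely, the steps are as follows.
\begin{enumerate}
\item Show that $\pi^*$ lands in $\Sph(\bA)$. This holds because $\pi$ sends the generic adjunction to a spherical one by \cref{lem:factorsph}, and locally exact functors preserve the cofiber sequences defining $\Ts$.
\item Show that $\pi^*\circ(k^*j_!)|_{\Sph(\bA)}\simeq\id$, using $\sph|_{\Sph(\bA)}\simeq\id$ (the unit of \cref{thm:sphericalize} is invertible on spherical objects).
\item Show that $(k^*j_!)\circ\pi^*\simeq\id$ on $\FUNst(\Sph,\bA)$. For this, extend $G$ to $\cL$ by $G\pi$ on $\Sigma^\infty_2\Adj$, then check that this extension is cocontinuous and apply the essential image part of \cref{lem:cocompletions}.
\end{enumerate}

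The main obstacle is step (3): building a functor on $\cL$ from $G$ together with the comparison maps $\sY(\epsilon)\to\sph\sY(\epsilon)$, and showing that it is the left Kan extension. I would first try to exhibit $\cL$ itself as the lax colimit or collage along $\pi$, so that functors out of $\cL$ correspond to pairs $(F,G)$ with a transformation $F\Rightarrow G\pi$. Cocontinuity then amounts to the induced map $\sph F\to G\pi$ being invertible, which by \cref{thm:sphericalize} is equivalent to the transformation being the unit.
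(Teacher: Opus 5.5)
\textbf{There is a genuine gap at step (3).} Your reduction to cocomplete $\bA$, step (1), and step (2) are fine and match the paper. The identity $\pi^*k^*j_!\simeq\sph$ on $\Adj(\bA)$ is a clean way to get one of the two composites.

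Step (3) is the crux, and you flag it but leave it open. The sentence deducing from $\sph(G\pi)\simeq G\pi$ that $k^*j_!$ is inverse to $\pi^*$ is circular. It only yields $\pi^*(k^*j_!\pi^*G)\simeq\pi^*G$, and getting from there to $k^*j_!\pi^*G\simeq G$ already presupposes that $\pi^*$ is fully faithful. So everything depends on producing a cocontinuous, locally exact $\tilde G\colon\cL\to\bA$ with $\tilde G|_{\Sph}\simeq G$ and $j^*\tilde G\simeq G\pi$.

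The collage route you propose for this is unsupported. Since $\cL$ is a \emph{full} sub-2-category of $\Fun(\Adj^{\op},\St)$, it has backward mapping categories $\cL(\sph\sY(\epsilon),\sY(\delta))\simeq\lim_{\mathbb{N}^{\op}}\Sigma^{\infty}_1\Adj(\epsilon,\delta)$, with transition maps given by composing with the twist. These must vanish for $\cL$ to be a locally stable collage of $\pi$; an honest lax colimit in $\Catt$ would even have them empty. Proving this vanishing is a separate computation, and your outline does not supply it. It plausibly follows from the Dold--Kan description, where the twists act by filtration shifts. Without it, the universal property you want to invoke for $\cL$ is not available.

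\textbf{The missing idea is that no such identification is needed.} The adjunction of \cref{thm:sphericalize} restricts to a reflection $\sph\colon\cL\rightleftarrows\Sph\colon\varphi$ with $\sph\circ j=\pi$. Hence $\sph^*\colon\FUNst(\Sph,\bA)\to\FUNst(\cL,\bA)$ is fully faithful, with essential image the functors inverting the units $\sY(\epsilon)\to\sph\sY(\epsilon)$. Simply put $\tilde G=G\circ\sph$. Then $\tilde G|_{\Sph}\simeq G$ and $j^*\tilde G=G\pi$. Moreover $\tilde G$ is cocontinuous, for two reasons:
\begin{itemize}
\item $\sph$ sends the colimit inclusions to equivalences;
\item $\sph$ sends the twist tower of $\sY(\epsilon)$ to a tower of equivalences. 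By naturality, $\sph$ of the twist endomorphism of $X$ is the twist endomorphism of $\sph X$, which is invertible because $\sph X$ is spherical.
\end{itemize}
This is exactly how the paper concludes. Both $j_!$ restricted to $\Sph(\bA)$ and $\sph^*$ are fully faithful with the same essential image, namely the cocontinuous functors inverting the maps $\sY(\epsilon)\to\sph\sY(\epsilon)$. So $\varphi^*j_!$ is an equivalence $\Sph(\bA)\simeq\FUNst(\Sph,\bA)$, with inverse $j^*\sph^*=\pi^*$.
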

\begin{proof}
   Let us first assume that we have shown the claim for sufficiently cocomplete locally stable 2-categories and fix a fully faithful functor $\tau \colon \bA \to \hat{\bA}$ where $\hat{\bA}$ is cocomplete. Then we obtain a commutative diagram
   \[
       \begin{tikzcd}
          \FUNst(\Sph,\bA)  \arrow[r,"\pi^*_{\bA}"] \arrow[d,"u"] &  \Adj(\bA) \arrow[d,"v"] \\
          \FUNst(\Sph,\hat{\bA})  \arrow[r,"\pi^{*}_{\hat{\bA}}"] & \Adj(\hat{\bA})
       \end{tikzcd}
   \]
   where the vertical and the bottom horizontal functors are fully-faithful. It then follows that the top horizontal morphism is also fully-faithful. Let $F \in \Adj(\bA)$ and suppose that $F$ is spherical, then it follows that is image $\hat{F} \in \Adj(\hat{\bA})$ is also spherical and therefore we obtain an object $\hat{G} \in  \FUNst(\Sph,\hat{\bA})$ such that $\pi^{*}_{\hat{\bA}}(\hat{G})\simeq \hat{F}$. Since $\pi \colon \Sigma^{\infty}_2 \Adj \to \Sph$ is essentially surjective it follows that for every $x \in \Sph$ the value $\hat{G}(x)$ factors through $\bA$. We conclude that there exists an object $G \in \FUNst(\Sph,\bA)$ such that $u(G)=\hat{G}$. We claim that $\pi^*_{\bA}(G) \simeq F$. To produces this isomorphism it suffices to produces an isomorphism between $\hat{F}$ and $v(\pi^*_{\bA}(G))\simeq \pi^*_{\hat{\bA}}(u(G))$ but the former is by construction given by $\hat{F}$ and thus the claim holds.

    For now own, we will assume without loss of generality that $\bA$ is cocomplete.  Let us denote by
    \[
        j \colon \Sigma^{\infty} \Adj \to \cL, \enspace \varphi \colon \Sph \to \cL
    \]
    the obvious inclusions (see \autoref{def:wSph}). We make the following observation

    \begin{itemize}
        \item The adjunction $\sph \dashv \iota$ from \cref{thm:sphericalize} restricts to $\sph \colon  \cL \llra \Sph \colon \varphi$ and so we obtain an adjunction
    \[
         \varphi^* \colon \FUNst(\cL,\bA) \llra \FUNst(\Sph, \bA) \colon \sph^*
    \]
    where $\sph^*$ is fully faithful with essential image those functors $G \colon \cL \to \bA$ which invert the maps $\alpha_i \colon \sY(\epsilon) \to \sph\sY (\epsilon)$ for $\epsilon \in \Sigma^{\infty}_2 \Adj$ and $i \in \mathbb{N}$. Recall the endomorphism of $\sY(\epsilon)$ constructed in Section ~\ref{sec:the_sphericalization_construction}. Applying $G$ to this iterated endomorphism and passing to colimits we see that 
    \[
        \colim_{\mathbb{N}}G(\sY(\epsilon)) \simeq G(\sph\sY(\epsilon)).
    \]
    \end{itemize}
    The composite
    \[
        \Sph(\bA) \xrightarrow{s} \Fun^{\on{st}}(\Sigma^{\infty}_2 \Adj,\bA)     \xrightarrow{j_{!}}   \FUNst(\cL, \bA),
    \]
    is fully-faithful and its essential image consists precisely in those cocontinuous functors $G$ for which the maps $G(\alpha_i) \colon G(\sY(\epsilon)) \to G(\sph \sY(\epsilon))\simeq  \colim_{\mathbb{N}}G(\sY(\epsilon))$ are invertible (cf. \autoref{lem:cocompletions}). Our first observation tells us that this is precisely the essential image of $\sph^*$ and thus we obtain an equivalence $\varphi^* \circ j_! \circ s \colon \Sph(\bA) \simeq \FUNst(\Sph, \bA)$. The inverse to this map is given by the factorization of $\pi^*= j^* \circ \sph^*$ to $\Sph(\bA)$. The result thus follows.
\end{proof}

As a consequence of Theorem~\ref{thm:unipropsph} and our construction in Section~\ref{sec:the_sphericalization_construction}, we obtain the following:

\begin{cor}
    For every locally stable 2-category $\bA$, there exists an equivalence of 2-categories,
    \[
        \mathfrak{F} \colon \Sph(\bA) \xlongrightarrow{\simeq} \Sph(\bA),  
    \]
    sending an adjunction $L \colon \cC \llra \cD \colon R$ in $\bA$ to the adjoint pair $T_C^{-1}R \dashv L$. The inverse functor $\mathfrak{F}^{-1}$ assigns to each adjunction as above the adjoint pair $R \dashv T_{\cD}^{-1}L$. We refer to $\mathfrak{F}$ as the \emph{Fourier transform} functor.\footnote{This terminology is originally due to Kapranov and also appears in \cite[Def.2.3]{gammage2022perverse}}
\end{cor}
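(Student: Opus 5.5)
The plan is to reduce to the universal case via \autoref{thm:unipropsph}. It suffices to construct the functor on $\Sph$ itself and then precompose. Concretely, we construct a spherical adjunction $\mathfrak{F}(\pi)$ inside $\Sph$, namely the pair $\Ts_{+}^{-1}R \dashv L$, where $(L,R)$ denotes the image under $\pi$ of the generic adjunction. This yields an object of $\Sph(\Sph)\simeq \FUNst(\Sph,\Sph)$, i.e.\ a locally exact endofunctor $\Phi \colon \Sph\to\Sph$. We then set $\mathfrak{F}=\Phi^{*}$, the restriction along $\Phi$ under the identification $\Sph(\bA)\simeq \FUNst(\Sph,\bA)$. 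This is automatically functorial in $\bA$, and it sends $(L,R)$ to $(T_{\cC}^{-1}R, L)$ because locally exact functors preserve units, counits, twists and their inverses. Similarly, $R\dashv \Ts_{-}^{-1}L$ gives $\Psi\colon\Sph\to\Sph$. Showing that $\mathfrak{F}$ is an equivalence then reduces to showing $\Phi\Psi\simeq\id$ and $\Psi\Phi\simeq \id$. By \autoref{thm:unipropsph} again, this amounts to checking that $\mathfrak{F}\mathfrak{F}^{-1}(L,R)\simeq (L,R)$ for the generic adjunction.

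The core step is constructing the adjunction $T_{\cC}^{-1}R\dashv L$ in an arbitrary locally stable $\bA$ for a spherical $(L,R)$. As the introduction indicates, we take as unit
\[
\id_{\cD}\xrightarrow{\;\partial\;} T_{\cD}^{-1}\ast\Sigma T_{\cD}\ \text{-type map }\ \id\to T_{\cD}^{-1}LR\simeq LT_{\cC}^{-1}R,
\]
obtained from the cofiber sequence $T_{\cD}\xrightarrow{c_w}LR\xrightarrow{\epsilon}\id$ after applying $T_{\cD}^{-1}$ and using $\alpha$ from \eqref{eq:counitsquares}. We take as counit $T_{\cC}^{-1}\ast t_w \colon T_{\cC}^{-1}RL\to\id$. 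The triangle identities will then be verified using the bicartesian diagrams \eqref{eq:counitsquares}, together with the fact from \autoref{lem:tmorphadjs} that $(T_{\cC},T_{\cD})$ is a morphism of adjunctions. The mate compatibility in that lemma is exactly what identifies $\beta$ with the mate of $\alpha$, and this is what makes the two composites
\[
L\to LT_{\cC}^{-1}RL\to L,\qquad T_{\cC}^{-1}R\to T_{\cC}^{-1}RLT_{\cC}^{-1}R\to T_{\cC}^{-1}R
\]
reduce to the triangle identities for $(L,R)$ twisted by the equivalences. Sphericality of the new adjunction would follow by computing its twist and cotwist from the same bicartesian squares: they should be $T_{\cD}$ and $T_{\cC}$, up to shift or inverse, hence equivalences.

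For invertibility, $\mathfrak{F}^{-1}\mathfrak{F}(L,R)$ is the adjunction $T_{\cC}^{-1}R\dashv T_{\cC}'^{-1}\circ T_{\cC}^{-1}R$-type pair. Here the twist of the new adjunction has to be identified, and this should give back $L\dashv R$ up to the equivalence $\alpha$. The argument is the same kind of diagram chase, and it only needs to be carried out in $\Sph$ for the generic adjunction.

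The main obstacle is the coherent verification of the triangle identities. In an $(\infty,2)$-category, an adjunction is determined by unit and counit together with homotopies witnessing the triangle identities, and it suffices to check these in the homotopy 2-category. Since $\bA$ is locally stable, I expect to do this by checking that the relevant composites are equivalences that restrict correctly on the cofiber sequences, i.e.\ by the same "$P\simeq 0$" style argument as in \autoref{lem:tmorphadjs}. Identifying the twist and cotwist of the new adjunction may require an octahedral-axiom argument, and I expect this to be where most of the work lies.
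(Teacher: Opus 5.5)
Your proposal follows the paper's proof. You reduce via \autoref{thm:unipropsph} to the generic spherical adjunction in $\Sph$, take unit $T_{\cD}^{-1}\ast c_w$ (transported along $\alpha$) and counit $T_{\cC}^{-1}\ast t_w$, verify the triangle identities by a bicartesian-squares ``$P\simeq 0$'' argument using \autoref{lem:tmorphadjs}, and then check sphericality and mutual inverseness.

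Some details are simpler than you anticipate:
\begin{enumerate}
\item The displayed unit should involve no connecting map $\partial$; it is just $T_{\cD}^{-1}\ast c_w$.
\item The new twist and cotwist are exactly $T_{\cD}^{-1}$ and $T_{\cC}^{-1}$. You get them by applying the exact functors $T_{\cD}^{-1}$ and $T_{\cC}^{-1}$ to the defining cofiber sequences, so no octahedral argument is needed.
\item $\mathfrak{F}^{-1}\mathfrak{F}(L,R)$ has left adjoint literally $L$ and right adjoint $T_{\cC}T_{\cC}^{-1}R\simeq R$, so the inverse check needs no further diagram chase.
\end{enumerate}
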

\begin{proof}
    Let us first sketch the argument before proceeding. First, we will show that for every adjunction $L \dashv R$ we can produce adjunctions $T_{\cC}^{-1}R \dashv L$ and $R \dashv T_{\cD}^{-1}L$. Applying this construction to the generic adjunction in $\Sph$ --and exploiting the universal property of the walking spherical adjunction given in \autoref{thm:unipropsph}-- we will obtain mutually inverse functors $F,F' \colon \Sph \to \Sph$. The desired autoequivalence will therefore be given by restriction along $F$.

    We will only construct the adjunction $T_{\cC}^{-1}R \dashv L$ as the remaining case is formally dual. As before, let $c_{w}\colon T_{\cD} \to LR$ and $t_{w} \colon RL \to  T_{\cC}$ be the maps obtained from the defining square of the twist and cotwist (see Section~\ref{sec:the_sphericalization_construction}). Then we have putative (co)units given by 
    \[
         T_{\cD}^{-1}\ast c_w  \colon \id \to T_{\cD}^{-1}LR \simeq L T_{\cC}^{-1}R, \enspace  T_{\cC}^{-1}\ast t_{w} \colon T_{\cC}^{-1}RL \to \id,
     \]
     where in the first equation we are using \autoref{lem:tmorphadjs}. In order to show that the triangular inequalities hold we consider the diagram,
     \[
\begin{tikzcd}
    P && L && 0 \\
    {T_{\cD}^{-1}L} && {(T_{\cD}^{-1}L)RL} && {T_{\cD}^{-1}L} \\
    {LT_{\cC}^{-1}} && {(LT_{\cC}^{-1})RL} \\
    0 && L
    \arrow[from=1-1, to=1-3]
    \arrow[from=1-1, to=2-1]
    \arrow[from=1-3, to=1-5]
    \arrow["{ T_{\cD}^{-1}\ast c_{w} \ast L}"',from=1-3, to=2-3]
    \arrow[from=1-5, to=2-5]
    \arrow["{T_{\cD}^{-1}\ast L\ast\eta}"', from=2-1, to=2-3]
    \arrow[from=2-1, to=3-1]
    \arrow["{T_{\cD}^{-1}\ast\epsilon\ast L}"', from=2-3, to=2-5]
    \arrow[from=2-3, to=3-3]
    \arrow[from=3-1, to=3-3, swap,"{L\ast T_{\cC}^{-1}\ast \eta}"]
    \arrow[from=3-1, to=4-1]
    \arrow[from=3-3, to=4-3, "{L*T_{\cC}^{-1}*t_{w}}",swap]
    \arrow[from=4-1,to=4-3]
\end{tikzcd}
\]
consisting of bicartesian squares and we claim that $P \simeq 0$. This follows since the second horizontal composite is an isomorphism due to the triangular inequalities for $L\dashv R$. Note that the middle square commutes as the twist and cotwist functors are morphisms of adjunctions (again by \autoref{lem:tmorphadjs}) and thus they intertwine the corresponding (co)units. We conclude that the (central) vertical composite is equivalent to the identity. 

We have produced a new adjunction $T_{\cC}^{-1}R \dashv L$ whose twist and cotwist are given by $T_{\cD}^{-1}$ and $T_{\cC}^{-1}$ respectively. The construction of the adjunction $R \dashv T_{\cD}^{-1}L$ is performed in a similar way and simple unraveling shows that both operations are mutual inverses as desired. 
\end{proof}

\begin{rem}
    A proof of the above result has already been appeared for the $2$-category of stable categories in \cite[Cor.2.5.14]{dyckerhoff2024spherical} using the characterization of spherical functors in terms of semi-orthogonal decompositions. Note that our approach yields a simple and completely formal proof of this fact in any locally stable $2$-category $\bA$, bypassing the theory of semi-orthogonal decompositions.
\end{rem}

Up to this point, we have constructed $\Sph$, a locally stable 2-category that classifies spherical adjunctions. It is possible, however, to make this description more explicit by unpacking the spherical adjunctions $\sph\sY(\epsilon)$ for $\epsilon \in \{ +,- \}$.  We will outline this here, leaving the corresponding proofs to the next section for the interested reader. 

In \autoref{cor:adj} and \autoref{thm:DK-}, we will show that the Dold-Kan equivalence can be upgraded to yield an equivalence of adjunctions;  

\begin{equation}\label{eq:adjunctions}
 \sY(+) \coloneq \left( \begin{tikzcd}
      \fSp_{\geq 0} \arrow[r, bend left =30, "\gr_{\geq 0}"] & \gSp_{\geq 0} \arrow[l,bend left=30,"\triv_{\geq 0}"]
 \end{tikzcd} \right) \quad \quad \quad \quad
 \sY(-) \coloneq \left( \begin{tikzcd}
    \gSp_{\leq 0} \arrow[r, bend left =30, "\triv_{\leq 0}{[}-1{]}\langle 1 \rangle"] & \fSp_{\leq 0} \arrow[l,bend left=30,"\gr_{\leq 0}"]
 \end{tikzcd} \right)
\end{equation}

where the functor $\gr:\fSp\rightarrow \gSp$ associates to a filtered spectrum $(X_{\bullet},f_{\bullet})$ its associated graded $\cof(f_{\bullet -1})\in \gSp$ and the functor $\triv$ associates to a graded spectrum the associated filtered spectrum with $0$-maps.

We can use this description to explicitly compute the sphericalization of $\sY(+)$ and $\sY(-)$:

\begin{thm}\label{thm:sphadj}
    There exist equivalences of adjunctions 
    \begin{equation}
 \fS(\sY(+))\simeq  \left( \begin{tikzcd}
      \fSp \arrow[r, bend left =30, "\gr"] & \gSp \arrow[l,bend left=30,"\triv"]
 \end{tikzcd} \right) \quad \quad \quad \quad
 \fS(\sY(-)) \simeq \left( \begin{tikzcd}
    \gSp \arrow[r, bend left =30, "\triv{[}-1{]}\langle 1 \rangle"] & \fSp \arrow[l,bend left=30,"\gr"]
 \end{tikzcd} \right)
\end{equation}
\end{thm}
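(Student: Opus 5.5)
Assuming the Dold--Kan identifications \cref{eq:adjunctions} (to be established in \autoref{cor:adj} and \autoref{thm:DK-}), the plan is to compute $\fS$ directly from its definition as the sequential colimit of $\Gamma$. This colimit is taken in $\Fun(\Adj^{\op},\St)$, so it is computed pointwise and componentwise in $\St$. For $\sY(+)$ the steps are as follows.

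First, I identify the twist and cotwist of the adjunction $\gr_{\geq 0}\dashv \triv_{\geq 0}$ explicitly. The unit $X\to \triv\gr X$ is, in filtration degree $n$, the map $X_n\to \cof(f_{n-1})$. Its cofiber is $\Sigma X_{n-1}$, with the transition maps induced by those of $X$. So
\[
T_{\fSp_{\geq 0}}X\simeq (\Sigma X)\langle -1\rangle,
\]
restricted to nonnegative degrees: a shift of filtration by one step, together with a suspension. The cotwist is handled in the same way. The counit $\gr\triv Y\to Y$ has $\gr\triv Y_n\simeq Y_n\oplus \Sigma Y_{n-1}$ because the transition maps are zero. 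Hence $T_{\gSp_{\geq 0}}Y\simeq \Sigma Y\langle -1\rangle$, truncated at degree $0$. I will check that the map $\alpha$ of \cref{eq:counitsquares} matches these identifications, so that the square \cref{eq:square} becomes the evident commuting square of shift functors.

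Second, I compute the colimits. In $\St$ a sequential colimit along the fully faithful functors $\Sigma^{\pm}\langle -1\rangle\colon \fSp_{\geq 0}\to\fSp_{\geq 0}$ can be computed as the filtered union of the images. Reindexing the $k$-th copy by $\langle k\rangle$ and $\Sigma^{-k}$ identifies the colimit with the union $\bigcup_k \fSp_{\geq -k}\subseteq\fSp$. Here the twist is fully faithful because the shift on nonnegatively filtered spectra is the inclusion onto objects vanishing in degree $0$, up to suspension. The union $\bigcup_k \fSp_{\geq -k}$ consists of the filtered spectra that vanish in sufficiently negative degrees. The same argument gives the bounded-below graded spectra on the $\gSp$ side, and $\gr,\triv$ extend by compatibility.

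The main obstacle is reconciling this with the statement, which uses all of $\fSp$ and $\gSp$. I expect this is resolved either because $\St$ is to be understood as presentable/idempotent-complete in a way that forces completion, or because the paper's $\fSp_{\geq 0}$ denotes something else. I would first check the compact-object and Ind-completion conventions. If we work in $\Pr^L_{\mathrm{st}}$, the colimit is $\Ind$ of the union, which is all of $\fSp$.

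Third, I must verify that the colimit adjunction is $\gr\dashv\triv$. Since the colimit in $\Adj$ is formed by left adjoints with compatible right adjoints, the right adjoint of the colimit is again $\triv$, so this step is formal. For $\sY(-)$ I argue dually, or by the same computation with $\triv[-1]\langle 1\rangle$ and $\gr$ on nonpositive degrees. There the twist shifts the filtration upward, and the union $\bigcup_k\fSp_{\leq k}$ exhausts $\fSp$ in the same sense as before.
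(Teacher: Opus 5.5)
Your computation is the one the paper's last sentence ("it follows directly from the formula for the sphericalization functor") leaves implicit, but the paper packages it differently. Left Kan extension along $\bZ_{\geq 0}\hookrightarrow\bZ$ gives a morphism of adjunctions $(i_!,i^{\simeq}_!)\colon(\gr_{\geq 0}\dashv\triv_{\geq 0})\to(\gr\dashv\triv)$. Since the target is spherical, Theorem~\ref{thm:sphericalize} factors it through a morphism of adjunctions $\fS(\sY(+))\to(\gr\dashv\triv)$. This makes your third step unnecessary, and also your planned check that $\alpha$ matches the shift identifications. Equivalences in $\Adj(\bA)$ are detected componentwise, so it only remains to show that the functors $\colim_k(\fSp_{\geq 0},T)\to\fSp$ and $\colim_k(\gSp_{\geq 0},T)\to\gSp$ are equivalences; on the $k$-th stage they are given by $X\mapsto\Sigma^{-k}(i_!X)\langle k\rangle$. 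Your direct route, in turn, exposes the point the paper glosses over.

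The obstacle you found is genuine. It concerns where the colimit is formed, not what $\fSp_{\geq 0}$ means, and idempotent completion does not help. If the colimit is formed in stable categories and exact functors, the comparison functor is fully faithful with essential image the filtered spectra vanishing in sufficiently negative degrees. So for $\bA=\St$ the statement does not hold as written.

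Moreover, $\sY(+)$ is by definition the adjunction $\Sigma^{\infty}_1(\Delta_+)\rightleftarrows\Sigma^{\infty}_1(\Delta_\infty)$ of small categories. Hence \eqref{eq:adjunctions} is only correct after passing to presheaves, i.e.\ after Ind-completion. Commit to your first guess: run the argument in $\bA=\on{Pr}^{\on{L}}_{\on{st}}$, or read the theorem up to Ind-completion.

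There the argument is clean. One has $T_{\fSp_{\geq 0}}\simeq\Sigma\circ k_!$ and $T_{\gSp_{\geq 0}}\simeq\Sigma\circ k_!$ for $k(n)=n+1$. The assignment $C\mapsto\Fun(C,\Sp)$, with left Kan extensions, preserves colimits as a functor to $\on{Pr}^{\on{L}}_{\on{st}}$, and $\bZ_{\geq 0}\xrightarrow{k}\bZ_{\geq 0}\xrightarrow{k}\cdots$ has colimit $\bZ$. So the colimits are $\fSp$ and $\gSp$. Equivalently, the colimit is the limit along the right adjoints $\Sigma^{-1}\langle 1\rangle$, which forget degree $0$.

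For $\sY(-)$ your sketch would go wrong, because the computation is not the mirror image of the $\sY(+)$ case. Since $L=\triv_{\leq 0}[-1]\langle 1\rangle$ vanishes in degree $0$, the fibre of the counit $LRX\to X$ is $\Sigma^{-1}X_{m+1}$ in degrees $m\leq -1$ but $\Sigma^{-1}X_0$ in degree $0$, and the last transition map is an equivalence. Thus $T_{\fSp_{\leq 0}}\simeq\Sigma^{-1}\lambda_!$ with $\lambda(m)=m-1$, i.e.\ restriction along $m\mapsto\min(m+1,0)$. It is not a shift with zero at the top; that description is correct only for $T_{\gSp_{\leq 0}}$.

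Consequently, after reindexing, the $k$-th stage embeds into $\fSp$ by left Kan extension along $\bZ_{\leq k}\hookrightarrow\bZ$, i.e.\ continued constantly upward, not by extension by zero. This matters. With extension by zero, the Ind-completed union would only be the localizing subcategory generated by spheres concentrated in a single degree, namely $\{X\in\fSp:\colim_n X_n\simeq 0\}$. With the correct embedding, the $\on{Pr}^{\on{L}}$-colimit is $\Fun(\colim_k\bZ_{\leq k},\Sp)=\fSp$.

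Arguing by duality also needs care here, since opposites do not stay inside $\on{Pr}^{\on{L}}_{\on{st}}$. Either compute this case directly, or dualize on compact objects.
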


To prove this theorem, we first need to compute the twist and cotwist functors of the adjunctions in Equation~\eqref{eq:adjunctions}. As this is a standard computation, we just record the results in the following:

\begin{lem}\label{lem:graded triv}
The twist functor and cotwist functors 
\begin{enumerate}
    \item of the adjunction $\gr_{\geq 0}\vdash \triv_{\geq 0}$ are given by $\sT_{\fSp_{\geq 0}}\simeq \langle-1\rangle[1]$ and $\sT_{\gSp_{\geq 0}}\simeq \langle-1\rangle[1]$ 
    \item of the adjunction $\triv_{\leq 0}[-1]\langle1\rangle\vdash \gr_{\leq 0}$ are given by $\sT_{\gSp_{\leq 0}}\simeq \langle1\rangle[-1]$ and $\sT_{\fSp_{\leq 0}}\simeq \langle1\rangle[-1]$.
\end{enumerate}
In particular, the adjunctions $\gr\vdash \triv$ and $\triv[-1]\langle1\rangle\vdash \gr$ are spherical. 
\end{lem}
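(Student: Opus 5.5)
The plan is to compute everything pointwise in the index $n$, using the explicit formulas for $\gr$ and $\triv$. Afterwards I will upgrade the pointwise identifications to natural equivalences of endofunctors. I begin by making the adjunction $\gr\dashv\triv$ explicit. A map $X\to\triv(G)$ of filtered spectra consists of maps $X_n\to G_n$ together with coherent nullhomotopies of the composites $X_{n-1}\to X_n\to G_n$. By the universal property of cofibers this is the same as a family of maps $\cof(f_{n-1})\to G_n$, that is, a map $\gr(X)\to G$. Under this description the unit $\eta_X\colon X\to\triv\gr X$ is levelwise the canonical map $X_n\to\cof(f_{n-1})$. The counit $\gr\triv G\to G$ is levelwise the projection $\cof(0\colon G_{n-1}\to G_n)\simeq G_n\oplus G_{n-1}[1]\to G_n$. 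In the positive case $\geq 0$ the same description applies, with the convention $X_{-1}=0$.

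Part (1) then follows from two levelwise computations. For the twist, the cofiber sequence $X_{n-1}\to X_n\to \cof(f_{n-1})$ identifies $\cof(X_n\to\cof(f_{n-1}))$ with $X_{n-1}[1]$. The induced transition maps are $f_{n-1}[1]$, so $\sT_{\fSp_{\geq0}}(X)\simeq X\langle-1\rangle[1]$. For the cotwist, the fiber of the projection $G_n\oplus G_{n-1}[1]\to G_n$ is $G_{n-1}[1]$, so $\sT_{\gSp_{\geq0}}\simeq\langle-1\rangle[1]$.

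For part (2) I argue the same way. First I check that $\triv[-1]\langle1\rangle\dashv\gr$ by the same cofiber argument, now using the rotated cofiber sequence $X_{n+1}[-1]\to\gr_{n+1}X[-1]\to X_n$. I then compute $\gr(\triv(G)[-1]\langle1\rangle)_n\simeq G_n\oplus G_{n+1}[-1]$. The unit is the inclusion of $G_n$, so its cofiber is $G_{n+1}[-1]$, giving the twist $\langle1\rangle[-1]$. The counit $\gr_{n+1}(X)[-1]\to X_n$ is the boundary map of the cofiber sequence $X_n\to X_{n+1}\to\gr_{n+1}X$. Its fiber is $X_{n+1}[-1]$ with transition maps $f[-1]$, giving the cotwist $\langle1\rangle[-1]$. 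Alternatively, part (2) should follow from part (1) by passing to opposite categories and reindexing $n\mapsto -n$. I will use this as a sanity check on the signs and the direction of the shifts.

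The main obstacle is coherence. The identifications above must hold as natural equivalences of endofunctors, not merely objectwise. I plan to obtain this by writing $\gr$, $\triv$, $\eta$ and $\epsilon$ as exact functors and natural transformations between functor categories $\Fun(\bZ,\Sp)$ and $\Fun(\bZ^{\simeq},\Sp)$ (for example, $\gr$ as the cofiber of $\langle-1\rangle\to\id$ followed by restriction). The fibers and cofibers defining the twists are then computed in stable functor categories, where the levelwise cofiber sequences assemble into cofiber sequences of functors.

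For the final sentence of the lemma, I work over all of $\bZ$ instead of the half-lines. There the same formulas hold, since no boundary index intervenes. The functors $\langle\pm1\rangle[\mp1]$ are autoequivalences of $\fSp$ and $\gSp$, so both adjunctions are spherical. On the half-lines, by contrast, $\langle-1\rangle$ is not invertible, which is consistent with $\sY(\pm)$ not being spherical.
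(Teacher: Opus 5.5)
The paper gives no proof of this lemma; it records the result as a standard computation. Your levelwise computation via the cofiber sequences $X_{n-1}\to X_n\to\gr_nX$ is exactly that computation. Part (1), with your convention $X_{-1}=0=G_{-1}$, is correct, and so is sphericity of both adjunctions over all of $\bZ$.

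\textbf{The gap: part (2) at the boundary.} The index $n=0$ of $\bZ_{\leq0}$ is never treated, and there your formulas use $X_1$ and $\gr_1X$, which do not exist.
\begin{itemize}
\item Adjointness forces $L=\triv_{\leq0}[-1]\langle1\rangle$ to put $G_{n+1}[-1]$ in level $n\leq-1$ and $0$ in level $0$. So $LR(X)_0=0$, and the counit in level $0$ is $0\to X_0$.
\item Hence $\sT_{\fSp_{\leq0}}(X)_0\simeq X_0[-1]$, and the transition map from level $-1$ is an equivalence.
\item On the graded side, by contrast, the unit in level $0$ is invertible, so $\sT_{\gSp_{\leq0}}(G)_0=0$.
\end{itemize}
So the lemma holds only if $\langle1\rangle$ on $\fSp_{\leq0}$ means restriction along $n\mapsto\min(n+1,0)$, i.e.\ $X_1:=X_0$, while on $\gSp_{\leq0}$ it means $G_1:=0$. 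Uniformly, all four functors are $i^*\circ\langle\pm1\rangle[\mp1]\circ i_!$, where $i$ is the inclusion of the half-line. The convention you would carry over from part (1), $X_1=0$, gives the wrong answer in level $0$. This case has to be computed and the convention stated.

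\textbf{The sanity check would reinforce the error rather than catch it.} On half-lines, the opposite of the adjunction in (1), taken with coefficients $\cC^{\op}$, is $\triv_{\leq0}\dashv\gr'$ with $\gr'(Y)_m=\fib(Y_m\to Y_{m+1})$ and $Y_1=0$. Since $\Sp^{\op}\not\simeq\Sp$, you would also need (1) for a general stable $\cC$.
\begin{itemize}
\item This adjunction is not equivalent to $\triv_{\leq0}[-1]\langle1\rangle\dashv\gr_{\leq0}$: $\gr'$ is conservative, while $\gr_{\leq0}$ kills constant diagrams.
\item Its cotwist is the extension-by-zero shift, which is exactly the wrong convention above.
\end{itemize}
The two adjunctions only match over $\bZ$, via the autoequivalence $\langle1\rangle[-1]$ of $\gSp$.

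\textbf{A suggestion for the coherence step.} Exhibit natural fiber sequences directly.
\begin{itemize}
\item The composite $X\langle-1\rangle\to X\xrightarrow{\eta}\triv\gr X$ is adjoint to $\gr(X\langle-1\rangle\to X)$.
\item That map is canonically null: each graded component factors through $\cof(\id_{X_{n-1}})=0$, and $\gSp$ has no transition maps, so no further coherence is needed.
\item Levelwise the resulting sequence is the cofiber sequence above, so $\sT_{\fSp}\simeq\langle-1\rangle[1]$ naturally in $X$.
\item Dually, in (2), $\epsilon$ followed by $X\to X\langle1\rangle$ is adjoint to $\gr(X\to X\langle1\rangle)\simeq0$. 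This gives $\sT_{\fSp_{\leq0}}\simeq\langle1\rangle[-1]$ with the boundary convention above.
\end{itemize}
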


\begin{proof}[Proof of Theorem~\ref{thm:sphadj}]
    We proof the claim for $\fS(\sY(+))$ as the other one is analogous. Denote by $i:\bZ_{\geq 0}\rightarrow \bZ$ the canonical inclusion and observe that there exists a commutative diagram
    \[
    \begin{tikzcd}
        \fSp_{\geq 0} \arrow[r,"i_{!}"] & \fSp \\
        \gSp_{\geq 0}\arrow[u,"\triv_{\geq 0}"] \arrow[r,"i^{\simeq}_{!}",swap] & \gSp\arrow[u,swap,"\triv"]
    \end{tikzcd}
    \] 
    It is easy to check that this diagram is vertically left adjointable and hence induces a morphism of adjunctions $(\gr_{\geq 0}\vdash \triv_{\geq 0}) \rightarrow (\gr \vdash \triv)$. Since the target is spherical by \autoref{lem:graded triv}, we obtain by Theorem~\ref{thm:sphericalize} a morphism of adjunctions
    \[
    \fS(\gr_{\geq 0}\vdash \triv_{\geq 0})\rightarrow (\gr \vdash \triv)
    \]
    It follows directly from the formula for the sphericalization functor that the induced morphism is an equivalence.
\end{proof}

\section{Dold-Kan equivalences}\label{sec:dold_kan_equivalences}
The goal of this section is to employ Dold-Kan type equivalences to establish the description of the adjunctions $\sY(+)$ and $\sY(-)$ given in Equation~\eqref{eq:adjunctions}. As these constructions follow a common general pattern, we begin by introducing a general notion of Dold-Kan correspondence in Subsection ~\ref{sub:the_axiomatic_setup}. We then define in Subsections ~\ref{sub:dold_kan_correspondences} and ~\ref{sub:equivalence_of_adjunctions} the needed examples and show that the chosen correspondences induce the desired equivalence of adjunctions.

\subsection{The axiomatic setup}\label{sub:the_axiomatic_setup}

\begin{ntt}\label{not:conejoin}
    Let $\cK$ be a category. We denote by $\cK^{\tre}$ the pushout $\cK^{\tre}\coloneq \cK\times [1]\cup_{\cK\times\{1\}}{[0]}$. We will refer to the additional object of $\cK^{\tre}$ as the \emph{cone point}. We observe that we have a canonical fully-faithful functor $\iota \colon \cK \to \cK^{\tre}$. 
\end{ntt}

\begin{defn}
    For a diagram $\cK^{\tre} \to \cC$ in a stable category we define its \emph{total cofiber}, if it exists, as the composite
    \[
       \totcof \colon  \Fun(\cK^{\tre},\cC)\xrightarrow{\phi} \Fun(\cK^{\tre},\cC)^{[1]} \xrightarrow{\cof} \Fun(\cK^{\tre},\cC)\xrightarrow{\ev_*} \cC,
    \]
    where $\phi$ selects the counit of the adjunction $\iota_! \dashv \iota^*$.
\end{defn}

\begin{ntt}
We denote by $\Cat^{\cone}$ the locally full sub-2-category of $\Fun([1],\Cat)$ on the functors $\cK \to \cK^{\tre}$ and whose morphisms are given by pullback squares. Informally speaking $\Cat^{\cone}$ is the sub-2-category of $\Cat$ generated by categories of the form $\cK^{\tre}$ and functors that only map the cone point to the cone point. 
We will often abuse notation and denote an object $(\cK\hookrightarrow \cK^{\tre})\in \Cat^{\cone}$ by $\cK^{\tre}$.
Note that evaluation at $0$ induces a functor, $\Fgt \colon \Cat^{\cone}\rightarrow \Cat$ that forgets the cone point. 
\end{ntt}

\begin{defn}
    Let $\cF \colon \cJ \to \Cat^{\cone}_{/\cK^{\tre}}$ be a functor and let $\cF^{\times}=\Fgt \circ \cF$. We denote the corresponding cocartesian unstraightenings by
    \[
        (p^{\times},q^{\times}) \colon \cX_{\cF^{\times}} \to \cJ \times \cK^{\tre}, \enspace   (p,q) \colon \cX_{\cF} \to \cJ \times \cK^{\tre},
    \]
    together with the canonical morphism of fibrations $\iota_{\cF} \colon \cX_{\cF^{\times}} \to \cX_{\cF}$. We let $s_{\cF} \colon \cJ \to \cX_{\cF}$ denote the section that selects the fibrewise terminal object. 
\end{defn}

\begin{defn}\label{def:doldkanappendix}
    Let $\cF \colon \cJ \to \Cat^{\cone}_{/\cK^{\tre}}$ and $\cC$ be a stable category such that  $ \iota_{\cF}^* \colon \Fun(\cX_{\cF},\cC) \to \Fun(\cX_{\cF^{\times}},\cC)$, admits a left adjoint. We define a functor $\on{DK}(\cF)$ as the composite
    \[
        \Fun(\cK^{\tre},\cC)\xrightarrow{q^*} \Fun(\cX_{\cF},\cC) \xrightarrow{\phi} \Fun(\cX_{\cF},\cC)^{[1]} \xrightarrow{\cof} \Fun(\cX_{\cF},\cC) \xrightarrow{s_{\cF}^*} \Fun(\cJ,\cC),
    \]
    where $\phi$ selects the counit of the adjunction $\iota_{\cF,!} \dashv \iota_{\cF}^*$.
\end{defn}

\begin{prop}\label{prop:equivdoldkans}
    Let $\cF_0,\cF_1 \colon \cJ \to \Cat^{\cone}_{/\cK^{\tre}}$ be functors and let $\alpha \colon \cF_0 \to \cF_1$ be a natural transformation such that $\alpha^{\times} \colon \cF_0^{\times} \to \cF_1^{\times}$ is pointwise cofinal. Suppose moreover, that $\cJ$ is a linearly ordered set. Let $\cC$ be a stable category such that, for each $i=0,1$, the functor $\iota_{\cF_{i}}^* \colon \Fun(\cX_{\cF_i},\cC) \to \Fun(\cX_{\cF_i^{\times}},\cC)$ admits a left adjoint. Then there is an equivalence $\DK(\cF_0)\simeq \DK(\cF_1)$.
\end{prop}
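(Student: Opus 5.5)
The plan is to build a comparison map $\DK(\cF_0)\to\DK(\cF_1)$ directly from $\alpha$, and then check that it is an equivalence pointwise in $j\in\cJ$. At each $j$ I will compute both sides with the pointwise formula for left Kan extensions.

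\emph{Step 1: the comparison map.} Unstraightening $\alpha$ gives a morphism of cocartesian fibrations $\bar\alpha\colon \cX_{\cF_0}\to\cX_{\cF_1}$ over $\cJ\times\cK^{\tre}$. Because $\alpha$ is a transformation in $\Cat^{\cone}_{/\cK^{\tre}}$, its components are pullback squares, so they preserve and reflect the cone points. Hence $\bar\alpha$ restricts to $\bar\alpha^\times\colon\cX_{\cF_0^\times}\to\cX_{\cF_1^\times}$, satisfies $\bar\alpha\circ\iota_{\cF_0}=\iota_{\cF_1}\circ\bar\alpha^\times$, and satisfies $q_1\bar\alpha=q_0$. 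Since fibrewise terminal objects are the cone points, we also get $\bar\alpha\circ s_{\cF_0}\simeq s_{\cF_1}$. The mate of the square $\bar\alpha\iota_{\cF_0}=\iota_{\cF_1}\bar\alpha^\times$ is a map $\iota_{\cF_0,!}\,\iota_{\cF_0}^*\bar\alpha^*\to\bar\alpha^*\iota_{\cF_1,!}\,\iota_{\cF_1}^*$, and it is compatible with the counits. Applying it to $q_1^*X$, taking cofibres and then applying $s_{\cF_0}^*\simeq s_{\cF_1}^*\bar\alpha^*$ gives a natural transformation $\DK(\cF_1)\to\DK(\cF_0)$, or dually one in the other direction; the direction is immaterial once we know it is invertible.

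\emph{Step 2: pointwise formula.} Fix $j\in\cJ$ and let $c$ denote the cone point. Then $\DK(\cF_i)(X)(j)$ is the cofibre of
\[
\colim_{(\cX_{\cF_i^\times})_{/s(j)}} X\circ q_i \;\longrightarrow\; X(c).
\]
I will identify the comma category. An object of $(\cX_{\cF_i^\times})_{/s(j)}$ is a point $x$ lying over some $j'\le j$, together with a morphism $x\to s(j)$. Such morphisms correspond to maps from the cocartesian transport of $x$ to $s(j)$ in the fibre over $j$, and there is exactly one such map because $s(j)$ is terminal in that fibre. So the comma category is equivalent to the restriction of $\cX_{\cF_i^\times}$ to the slice $\cJ_{/j}$. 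This is where I will use that $\cJ$ is a linearly ordered set: it makes $\cJ_{/j}=\cJ_{\le j}$ a poset with trivial mapping spaces, so the identification is an honest equivalence and not merely a map to a fibration over a more complicated slice. Thus the colimit is taken over the cocartesian fibration $\cX_{\cF_i^\times}|_{\cJ_{\le j}}\to\cJ_{\le j}$.

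\emph{Step 3: cofinality, which I expect to be the main obstacle.} We need that $\bar\alpha^\times$ restricted over $\cJ_{\le j}$ is cofinal. It is a map of cocartesian fibrations over $\cJ_{\le j}$ which is fibrewise cofinal by the hypothesis on $\alpha^\times$. I would argue as follows. Colimits over the total space of a cocartesian fibration are computed by first left Kan extending to the base, and for a cocartesian fibration the value of that Kan extension at $j'$ is the colimit over the fibre at $j'$. Fibrewise cofinality therefore identifies the two left Kan extensions to $\cJ_{\le j}$, and the total colimits agree. The delicate point is to make this hold naturally in $X$ and compatibly with the maps to $X(c)$. Both of these come from Step 1, since the comparison map is the mate, and by construction it is induced by $\bar\alpha^\times$ on comma categories. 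Hence the cofibres agree at every $j$, and so the transformation of Step 1 is an equivalence $\DK(\cF_0)\simeq\DK(\cF_1)$.
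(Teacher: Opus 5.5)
Your proposal is correct and follows essentially the paper's argument. The paper likewise reduces to invertibility of the Beck--Chevalley (mate) map for the square of restrictions along $\iota_{\cF_i}$ at the fibrewise terminal objects, identifies $(\cX_{\cF_i^{\times}})_{/x}$ with the part of $\cX_{\cF_i^{\times}}$ over $\cJ_{\le px}$ because the relevant mapping spaces are $\emptyset$ or $\ast$, and then proves cofinality; it finishes more directly, using that the fibre over the terminal object $px$ includes cofinally into this cocartesian fibration plus two-out-of-three, rather than your Kan-extension-to-the-base argument (and note your mate actually yields a map $\DK(\cF_0)\to\DK(\cF_1)$, not the reverse).
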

\begin{proof}
    Minor unraveling reveals that it suffices to show $\alpha$ defines a morphism of adjunctions. To achieve this, we will only need to show that the commutative diagram
    \[
        \begin{tikzcd}
            \Fun(\cX_{\cF_1},\cC) \arrow[r,"\iota_{\cF_{1}}^*"] \arrow[d,"\alpha^*"] & \Fun(\cX_{\cF^{\times}_1},\cC) \arrow[d,"{(\alpha^{\times})^{*}}"] \\
             \Fun(\cX_{\cF_0},\cC) \arrow[r,"\iota_{\cF_{0}}^*"] &  \Fun(\cX_{\cF^{\times}_0},\cC),
        \end{tikzcd}
    \]
    is horizontally left adjointable. Further unpacking then shows that to this end, it will be enough to show that for every object $x \in \cX_{\cF_0}$ which is terminal in its corresponding fibre the functor 
    \[
        {\cX_{\cF_{0}^{\times}}}_{/x} \to {\cX_{\cF_1^{\times}}}_{/\alpha(x)},
    \]
    is cofinal. Recall that the fibration ${\cX_{\cF_i}}_{/x} \to \cX_{\cF_i}$ corresponds to the representable functor on the object $x$. The fact that $\cJ$ is a linearly ordered set and that $x$ is terminal in its corresponding fibre shows that 
    \[
        \cX_{\cF_{i}}(a,x)=\begin{cases}
            \emptyset, \enspace \text{ if  } p_i(a) > p_i(x), \\
            \ast, \enspace \text{ else }.
        \end{cases}
    \]

    This implies that we have a pullback diagram,
    \[
        \begin{tikzcd}
           {\cX_{\cF^{\times}_i}}_{/x} \arrow[r] \arrow[d] &    {\cX_{\cF_i}}_{/x}  \arrow[r] \arrow[d]  & \cJ_{\leq px} \arrow[d] \\
           \cX_{\cF^{\times}_i}  \arrow[r] & \cX_{\cF_i}  \arrow[r] & \cJ
        \end{tikzcd}
    \]
    for $i=0,1$. Taking fibres over the terminal object of $\cJ_{\leq px}$, we obtain a commutative diagram
    \[
        \begin{tikzcd}
            \cF_0^{\times}(px) \arrow[r] \arrow[d] & \cF^{\times}_1(px) \arrow[d] \\
             {\cX_{\cF_{0}^{\times}}}_{/x} \arrow[r] & {\cX_{\cF_1^{\times}}}_{/\alpha(x)}.
        \end{tikzcd}
    \]
    where the vertical functors are cofinal since cocartesian fibrations are smooth (cf. \cite[Proposition 4.1.2.15, Remark 4.1.2.10]{LurieHTT}). Since the top horizontal morphism is cofinal by assumption, we conclude that the bottom map is cofinal as well (cf. \cite[Prop 4.1.1.3]{LurieHTT}).
\end{proof}

The following proposition follows readily from the proof of \autoref{prop:equivdoldkans}.

\begin{prop}\label{prop:existenceandformuladk}
    Let $\cJ$ be a linearly ordered set and let $\cF \colon \cJ \to \Cat^{\cone}_{/\cK^{\tre}}$. Given a stable category $\cC$ admitting colimits of shape $\cF^{\times}(j)$ then the following holds:
    \begin{enumerate}
        \item The functor $\iota_{\cF}^* \colon \Fun(\cX_{\cF},\cC) \to \Fun (\cX_{\cF^{\times}},\cC)$ admits a left adjoint.
        \item For every $X \colon \cK^{\tre} \to \cC$, we have that the value of $\on{DK}(\cF)(X)$ at every $j \in \cJ$ is computed as the total cofiber of the functor
        \[
            \cF(j) \to \cX_{\cF} \to \cK^{\tre} \xrightarrow{X} \cC.
        \]
    \end{enumerate}
\end{prop}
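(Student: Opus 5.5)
Both parts come from the pointwise formula for left Kan extension along $\iota_{\cF}\colon \cX_{\cF^{\times}} \to \cX_{\cF}$, combined with the slice computation in the proof of \autoref{prop:equivdoldkans}.

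\textbf{Step 1 (slices).} Fix $x \in \cX_{\cF}$ lying over $(j,k)$. Recall that $\cJ$ is linearly ordered and that $\cX_{\cF}(a,x)$ is either empty or contractible; the latter is what was used when $x$ is fibrewise terminal. When $x$ is fibrewise terminal, that proof already identifies the slice ${\cX_{\cF^{\times}}}_{/x}$ as the pullback of $\cX_{\cF^{\times}}$ over $\cJ_{\leq j}$. It also shows that the fibre inclusion $\cF^{\times}(j) \to {\cX_{\cF^{\times}}}_{/x}$ is cofinal, because cocartesian fibrations are smooth and $j$ is terminal in $\cJ_{\leq j}$. For a general $x$ the same argument applies with $\cF^{\times}(j)$ replaced by the slice $\cF^{\times}(j)_{/x}$ computed inside the fibre $\cF(j)$.

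\textbf{Step 2 (part (1)).} For every $x$ the slice ${\cX_{\cF^{\times}}}_{/x}$ therefore has a cofinal subcategory which is either $\cF^{\times}(j)$ or a slice of a fibre of the form $\cF^{\times}(j)_{/x}$. I will check that the latter is either terminal-containing or again of shape $\cF^{\times}(j)$. This holds because in $\cK^{\tre}$ only the cone point receives maps from everything, so the fibre of $\cF(j)\to\cK^{\tre}$ over a non-cone point has $x$ in $\cF^{\times}(j)$ itself. Since $\cC$ admits colimits of shape $\cF^{\times}(j)$, the pointwise left Kan extension exists, and so $\iota_{\cF,!}$ exists by \cite[4.3.2]{LurieHTT}.

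\textbf{Step 3 (part (2)).} Evaluate $\on{DK}(\cF)(X)$ at $j$. By definition this is the cofiber of the counit $\iota_{\cF,!}\iota_{\cF}^* q^*X \to q^*X$ evaluated at $s_{\cF}(j)$. By Step 1 the left-hand side at $s_{\cF}(j)$ is $\colim_{\cF^{\times}(j)} X\circ q$, and the counit is the canonical map to $X(q(s_{\cF}(j)))$. Since $s_{\cF}(j)$ is the cone point of $\cF(j)$, this map is exactly the counit for $\cF(j)^{\times}\hookrightarrow\cF(j)$ at the cone point, applied to the restriction $\cF(j)\to\cX_{\cF}\to\cK^{\tre}\xrightarrow{X}\cC$. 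That cofiber is the total cofiber by definition.

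\textbf{Main obstacle.} The delicate step is the last identification: I must check that the counit map of the global adjunction, restricted along the fibre inclusion $\cF(j)\to \cX_{\cF}$, agrees with the counit of $\iota_!\dashv\iota^*$ for $\cF(j)$. I would handle this by the Beck–Chevalley argument used in \autoref{prop:equivdoldkans}. The commutative square of restrictions along the fibre inclusion is left adjointable at the cone point because the relevant slice functor is cofinal, as shown in Step 1.
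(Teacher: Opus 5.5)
Your proposal is correct and matches the paper's route. The paper only remarks that the statement follows from the proof of \autoref{prop:equivdoldkans}, i.e.\ from the cofinality of $\cF^{\times}(j) \to {\cX_{\cF^{\times}}}_{/s_{\cF}(j)}$; you use that same fact both for the existence of the pointwise left Kan extension and to identify the counit at the cone point with the one defining the total cofiber.

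Your detour through general $x$ can be dropped. Since $\iota_{\cF}$ is fully faithful, and morphisms in $\Cat^{\cone}$ are pullback squares, the only objects outside its image are the cone points $s_{\cF}(j)$. At every other object the slice trivially has a terminal object.
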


For the construction of the morphism of adjunctions, we need a different description of the functor $\DK(\cF)$. This will be a consequence of the following more general abstract Proposition:

\begin{prop}\label{prop:laxlimitmagic}
    Let $R \colon \cA \to \cB$ be a right adjoint between stable categories. Then the functor $\cA \xrightarrow{\epsilon} \cA^{[1]} \xrightarrow{\cof} \cA$, taking the cofiber of the counit $L \dashv R$ is equivalent to the composite
    \[
        \cA \xrightarrow{\ev_{\cA}^{R}} \lim^{\lax}R \xrightarrow{u_!} \cA^{[1]} \xrightarrow{\ev_1}\cA,
    \]
    where $\ev_{\cA}^{R}$ is the right adjoint to the obvious projection to $\cA$ and $u_!$ is the left adjoint to the canonical restriction $u^* \colon \cA^{[1]} \to \lim\limits^{\lax}R$.
\end{prop}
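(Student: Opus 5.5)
The plan is to compute both sides on objects through explicit descriptions of the two adjoints. The functor $u^*$ has to be fixed concretely: the naive choice, which sends $x\to y$ to $(Rx, y, Rx\to Ry)$, makes $\ev_1\circ u_!\circ \ev^R_{\cA}$ return the identity rather than the cofiber. The correct choice uses fibers, and stability is what allows this.

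\textbf{Conventions.} I model $\lim^{\lax}R$ as the category of triples $(b,a,g)$ with $b\in\cB$, $a\in\cA$ and $g\colon b\to Ra$. The projection is $p(b,a,g)=a$. I identify $\cA^{[1]}$ with the category of fibre sequences $z\to x\to y$ in $\cA$, which is possible since $\cA$ is stable. The canonical restriction is then
\[
u^*(x\xrightarrow{f}y)=\bigl(R\fib(f),\,x,\,R(\fib(f)\to x)\bigr).
\]
This is restriction along the part $\fib(f)\to x$ of the fibre sequence, postcomposed with $R$ on the first coordinate.

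\textbf{Step 1: the right adjoint $\ev^R_{\cA}$.} I claim $\ev^R_{\cA}(a)=(Ra,a,\id_{Ra})$. A map $(b,a',g)\to(Ra,a,\id)$ consists of $h\colon a'\to a$ and $k\colon b\to Ra$ with $k=R(h)\circ g$. So $k$ is determined by $h$, and the mapping space is $\cA(a',a)=\cA(p(b,a',g),a)$, naturally in both variables.

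\textbf{Step 2: the left adjoint $u_!$.} I claim $u_!(b,a,g)=\bigl(a\to\cof(g^\flat)\bigr)$, where $g^\flat\colon Lb\to a$ is the adjunct of $g$. I verify this on mapping spaces. A map $(b,a,g)\to u^*(f)$ is a map $h\colon a\to x$ together with $k\colon b\to R\fib(f)$ lying over $R(h)\circ g$. By adjunction this is $h$ together with a lift of $h\circ g^\flat\colon Lb\to x$ through $\fib(f)\to x$. Equivalently, it is $h$ together with a nullhomotopy of $f\circ h\circ g^\flat$.

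On the other side, a map $(a\to\cof(g^\flat))\to(x\xrightarrow{f}y)$ in $\cA^{[1]}$ is $h\colon a\to x$ together with an extension of $f\circ h$ along $a\to\cof(g^\flat)$. By the universal property of the cofiber, this is again $h$ together with a nullhomotopy of $f\circ h\circ g^\flat$. The two descriptions agree naturally, and both computations are pullback squares of mapping spaces over $\cA(a,x)$.

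\textbf{Step 3: composing.} For $a\in\cA$ we have $\ev^R_{\cA}(a)=(Ra,a,\id)$. The adjunct of $\id_{Ra}$ is the counit $\epsilon_a\colon LRa\to a$. Hence
\[
u_!\,\ev^R_{\cA}(a)=\bigl(a\to\cof(\epsilon_a)\bigr),\qquad \ev_1\,u_!\,\ev^R_{\cA}(a)=\cof(\epsilon_a).
\]
All identifications in Steps 1 and 2 come from natural equivalences of mapping spaces. The composite therefore agrees functorially with $\cof\circ\epsilon$, via the Yoneda lemma applied to the corepresenting objects.

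\textbf{Main obstacle.} The delicate point is justifying that this $u^*$ is ``the canonical restriction'' in the statement. I would present it as follows. The lax limit is the full subcategory of $\Fun([1],-)$-type data obtained by gluing along $R$. Precomposing with the equivalence $\cA^{[1]}\simeq\{\text{fibre sequences}\}$, which is where stability enters, gives a restriction functor. Its source is the arrow category of the pair $(z\to x)$, and passing to $\cA$ costs only $R$ on the $z$-coordinate.

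It then remains to check that $u^*$ preserves limits. This holds because $R$ and $\fib$ are both exact, so the left adjoint $u_!$ exists independently of the explicit formula. Step 2 then only identifies $u_!$.
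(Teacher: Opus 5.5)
\textbf{Convention mismatch.} Your Steps 1--3 are correct computations for the functor you define. Structurally they follow the paper's argument: compute $u_!$ explicitly as a cofiber/pushout and evaluate it on $\ev^R_{\cA}$.

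The difficulty is that you chose the mirror-image model $\{(b,a,b\to Ra)\}$ of $\lim^{\lax}R$. The statement, as the paper's proof and Remark~\ref{rmk:alternativeformula} make clear, uses the model with objects $(x,y,v\colon Rx\to y)$, where $x\in\cA$ and $y\in\cB$. In that model:
\begin{itemize}
\item $\ev^R_{\cA}(x)=(x,0,0)$.
\item The canonical restriction is the untwisted $u^*(f\colon x\to x')=(x,Rx',Rf)$. In the application it is literally a restriction $\beta^*$ along a functor, so that $u_!$ is a left Kan extension.
\item Your Step 2 argument, run in this model, gives $u_!(x,y,v)=(x\to x\sqcup_{LRx}Ly)$. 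At $y=0$ this is $x\to\cof(\epsilon_x)$.
\end{itemize}
That is the paper's proof via $\xi$ (the $y$ in the paper's pushout square should be read as $Ly$). So your claim that the naive restriction returns the identity holds only in the model you picked. The fiber twist in your $u^*$ is compensating for that choice.

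\textbf{The gap.} Your justification that the twisted $u^*$ is ``the canonical restriction'' does not hold up as written. It amounts to taking the canonical restriction of your model and precomposing with an autoequivalence of $\cA^{[1]}$ chosen so that the answer comes out right, which by itself justifies nothing. The missing argument is an identification of the two models:
\begin{itemize}
\item Since $\cB$ is stable, $\Psi(b\xrightarrow{g}b')=(b'\to\cof g)$ is an autoequivalence of $\cB^{[1]}$ with $\ev_0\circ\Psi\simeq\ev_1$.
\item Hence it induces an equivalence over $\cA$ between your model and the paper's, given by $\Theta(b,a,g)=(a,\cof g,Ra\to\cof g)$.
\item $\Theta$ sends $(Ra,a,\id)$ to $(a,0,0)$.
\item Because $R$ is exact, $\cof(R\fib f\to Rx)\simeq Ry$, so $\Theta\circ u^*$ is the canonical $(x,Ry,Rf)$.
\end{itemize}
Adding this closes the gap. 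It is simpler still to work in the paper's model from the start, where no twist and no fiber sequences in $\cA$ are needed.

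\textbf{Minor point.} Your final remark, that $u^*$ preserving limits guarantees a left adjoint, is not valid for small stable categories without further hypotheses. It is also unnecessary: the pointwise corepresentability established in Step 2 already constructs $u_!$.
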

\begin{proof}
    There exists a functor $\xi \colon \lim\limits^{\lax}R \to \cA^{[1]^{2}}$ which sends every $v \colon R(x) \to y$ representing an object in $\lim\limits^{\lax}R$ to the pushout square
    \[
        \begin{tikzcd}
            LR(x) \arrow[r,"\epsilon"] \arrow[d] & x \arrow[d] \\
            y  \arrow[r] & z {.}
        \end{tikzcd}
    \]
    Restricting this map along $\ev_{\cA}^{R}$ amounts to setting $y=0$ in the previous square. It is easy to verify that $u_!$ is precisely the functor obtained by composing $\xi$ with the functor selecting the right-most vertical arrow in the square above. Similarly, the functor $\cof \circ \epsilon$ can be obtained from the functor $\nu \colon \cA \to \cA^{[1]^{2}}$ sending each $x \in \cA$ to the pushout square
   \[
        \begin{tikzcd}
            LR(x) \arrow[d,"\epsilon"] \arrow[r] & 0 \arrow[d] \\
            x  \arrow[r] & \ell {.}
        \end{tikzcd}
    \]
    after restriction to the terminal edge. It follows that $\xi \circ \ev_{\cA}^{R}$ and $\nu$ agree up to an automorphism of the category $[1]^{\times 2}$ which fixes the terminal object. The result follows. 
\end{proof}

\begin{rmk}\label{rmk:alternativeformula}
     Let $\cF \colon \cJ \to \Cat^{\cone}_{/\cK^{\tre}}$ and $\cC$ be a stable category such that  $ \iota_{\cF}^* \colon \Fun(\cX_{\cF},\cC) \to \Fun(\cX_{\cF^{\times}},\cC)$, admits a left adjoint. Let $\cF_{\square}=\colim\limits^{\oplax}\iota_{\cF}$ and consider the functors
     \[
          \cX_{\cF} \xrightarrow{\alpha} \cX_{\cF_{\square}} \xrightarrow{\beta} \cX_{\cF}\times [1],
      \] 
      Then it follows from \cref{prop:laxlimitmagic} that the functor $\on{DK}(\cF)$ can be computed as the composite 
     \[
        \Fun(\cK^{\tre},\cC)\xrightarrow{q^*} \Fun(\cX_{\cF},\cC) \xrightarrow{\alpha_*} \Fun( \cX_{\cF_{\square}},\cC) \xrightarrow{\beta_!} \Fun(\cX,\cC)^{[1]} \xrightarrow{\ev_1} \Fun(\cX,\cC) \xrightarrow{s_{\cF}^*} \Fun(\cJ,\cC),
     \]
     where $\alpha_*,\beta_!$ denote the corresponding right and left Kan extensions respectively.
\end{rmk}

\subsection{Dold-Kan correspondences}\label{sub:dold_kan_correspondences}

\begin{defn}[Augmented Lurie Dold-Kan]\label{defn:LDK}
We denote by $\Delta_{+,\leq k}^{\op}$ the full subcategory of $\Delta_{+}^{\op}$ generated by objects $[n]$ with $n\leq k$. Note that we have that $\Delta_{\leq k}^{\op,\tre} \simeq \Delta_{+,\leq k}^{\op}$ for every $k \geq -1$.

We have a functor $\cF_{+} \colon  \bZ_{\geq 0} \to \Cat_{/\Delta_{+}^{\op}}$ defined as $\cF_{+}(k+1) = \Delta_{\leq k,+}^{\op}$ where for $k+1 < k+2$ we assign the obvious functor induced by $d_{k+1} \colon [k] \to [k+1]$. 
\end{defn}

\begin{ntt}
    Let $X_{\bullet}:\Delta_{+}^{\op}\rightarrow \cC$ be an augmented simplicial object. We denote by $X_{\bullet}^{\leq k}$ the restriction of $X_{\bullet}$ to $\Delta_{+,\leq k}^{\op}$.
\end{ntt}

\begin{prop}[Augmented Lurie Dold-Kan]\label{prop:LDK}
    Let $\cC$ be a stable category. Then we have an equivalence of categories
    \[
    \on{LDK}_{+}\coloneq \DK(\cF_{+}) \colon  \Fun(\Delta_{+}^{\op},\cC) \xrightarrow{\simeq} \Fun(\bZ_{\geq 0},\cC),
    \]
    that maps an augmented simplicial $X_{\bullet}$ to the filtered object
    \[
    \totcof(X^{\leq -1}_{\bullet}) \rightarrow \totcof(X^{\leq 0}_{\bullet}) \rightarrow \totcof(X^{\leq 1}_{\bullet}) \rightarrow \dots
    \]
\end{prop}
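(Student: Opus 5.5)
The proof splits into two parts: first identify the values of $\DK(\cF_+)$, then show that it is an equivalence by reducing to Lurie's (unaugmented) $\infty$-categorical Dold--Kan correspondence.

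\textbf{Formula.} Since $\bZ_{\geq 0}$ is linearly ordered and each $\Delta_{\leq k}^{\op}$ is finite, a stable $\cC$ admits colimits of shape $\cF_+^{\times}(k+1)=\Delta^{\op}_{\leq k}$. So \autoref{prop:existenceandformuladk} applies. It gives that $\iota_{\cF_+}^*$ has a left adjoint, so $\DK(\cF_+)$ is defined, and that its value at $k+1$ is the total cofiber of $X^{\leq k}_\bullet$ restricted along $\Delta_{\leq k}^{\op,\tre}\simeq\Delta^{\op}_{+,\leq k}$. Explicitly, this value is
\[
\cof\bigl(\colim_{\Delta^{\op}_{\leq k}}X_\bullet\to X_{-1}\bigr).
\]
The transition maps are induced by the inclusions $d_{k+1}$ via functoriality of the unstraightening. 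This is exactly the displayed filtered object; in degree $0$ it is $\totcof(X^{\leq -1})=X_{-1}$, since the total cofiber over an empty $\cK$ is just the value at the cone point.

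\textbf{Equivalence.} I would compare with Lurie's Dold--Kan equivalence $\Fun(\Delta^{\op},\cC)\simeq\Fun(\bZ_{\geq 0},\cC)$, $Y\mapsto(\colim_{\Delta^{\op}_{\leq n}}Y)_n$ (HA 1.2.4.1). An augmented object is a pair $(Y,\,a\colon|Y|_{\text{partial}}\to X_{-1})$; more precisely, $\Fun(\Delta_+^{\op},\cC)$ is the lax limit of the colimit functor, i.e.\ pairs of a simplicial $Y$ and a map $\colim_{\Delta^{\op}}Y\to X_{-1}$. This is best phrased through the stable recollement
\[
\Fun(\Delta_+^{\op},\cC)\simeq\{(Y,X_{-1},\,\colim Y\to X_{-1})\}.
\]
I would build the inverse directly. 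Given a filtered object $Z_0\to Z_1\to\cdots$, set $X_{-1}=Z_0$, and let $Y$ be the simplicial object corresponding under Lurie's correspondence to the filtered object $\fib(Z_0\to Z_{n+1})_{n\geq 0}$, whose structure maps are the canonical ones. Since $\colim_{\Delta^{\op}_{\leq n}}Y\simeq\fib(Z_0\to Z_{n+1})$, we get a map $\colim_{\Delta^{\op}_{\leq n}}Y\to Z_0$ compatible in $n$. This determines the augmentation $Y_0\to Z_0$, and hence, because $\Delta_+^{\op}$ is obtained from $\Delta^{\op}$ by freely adjoining a terminal object, an augmented object.

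Checking that the two functors are mutually inverse reduces, levelwise, to the fiber/cofiber equivalence for $[1]$-diagrams in a stable category, combined with Lurie's theorem. The cleanest way to organize this is to observe that $\DK(\cF_+)$ factors as
\[
\Fun(\Delta^{\op}_+,\cC)\simeq\Fun(\Delta^{\op}\!,\cC)\times_{\cC}\cC^{[1]}\to\Fun(\bZ_{\geq0},\cC)\times_{\cC}\cC^{[1]}\simeq\Fun(\bZ_{\geq0},\cC),
\]
where the pullbacks are taken over $\cC$ via the colimit and $\ev_0$ respectively, and the middle map is Lurie's equivalence. The last equivalence sends $(W_\bullet, W_\infty\to X)$ to $\cof(W_{\bullet-1}\to X)$, with $W_{-1}=0$. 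Identifying the middle pullback precisely is the delicate point, because the colimit over $\Delta^{\op}$ is not one of the finite stages.

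\textbf{Main obstacle.} The hardest step is making this factorization coherent: showing that the totcof functor agrees naturally with cofibers of the maps $\colim_{\Delta^{\op}_{\leq n}}Y\to X_{-1}$. To settle it, I would work stagewise. The restriction $\Fun(\Delta^{\op}_{+,\leq k},\cC)\to\Fun(\Delta^{\op}_{\leq k},\cC)\times\cC$ together with a cone map is a clean equivalence, and Lurie's proof proceeds by induction on skeleta, where at each stage the new data is a cofiber of a latching map. With the augmentation, each stage's new data is exactly $\cof(\totcof(X^{\leq k-1})\to\totcof(X^{\leq k}))$, which is a free coordinate. So I expect to argue inductively that $\Fun(\Delta^{\op}_{+,\leq k},\cC)\to\Fun([0,k+1],\cC)$ is an equivalence for each $k$ and then pass to the limit over $k$, since both sides are limits of their truncations.
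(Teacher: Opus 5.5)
\textbf{Verdict: correct, but the equivalence is proved by a different route from the paper's, and two steps need tightening.}

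\textbf{Comparison.} Your computation of the values matches the paper: \autoref{prop:existenceandformuladk} gives $\DK(\cF_+)(X)_{k+1}\simeq\totcof(X^{\le k}_\bullet)=\cof(\colim_{\Delta^{\op}_{\le k}}X\to X_{-1})$. For the equivalence itself, the paper gives no argument of its own and defers to the argument of \cite[Lem.~1.6]{gammage2022perverse}. You instead reduce to Lurie's Dold--Kan theorem, using $\Delta_+^{\op}\simeq(\Delta^{\op})^{\tre}$ and the stable exchange of cones for cocones. This route is sound and self-contained. It also gives an explicit inverse: $X_{-1}=Z_0$, with underlying simplicial object the Lurie-inverse of $\fib(Z_0\to Z_{\bullet+1})$. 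The paper's citation gains only brevity.

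\textbf{First point: existence of the colimits.} Colimits of shape $\Delta^{\op}_{\le k}$ do not exist in a stable $\cC$ because $\Delta^{\op}_{\le k}$ is ``finite''. For $k\ge 1$ it contains non-identity idempotents, e.g.\ $[1]\to[0]\to[1]$. So its nerve is not a finite simplicial set, and finiteness alone gives nothing: colimits over the walking idempotent need not exist in a stable $\cC$. These colimits exist because the subcategory of injective maps is cofinal \cite[Lem.~1.2.4.17]{LurieHA}. This is the same cofinality the paper uses when it passes to $\tilde{\cF}_+$.

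\textbf{Second point: the global factorization.} As you anticipate, factoring through pullbacks along $\colim_{\Delta^{\op}}$ requires geometric realizations, which a stable $\cC$ need not have. Your stagewise version repairs this, and no new induction is needed. Using Lurie's equivalence in its truncated form $\Fun(\Delta^{\op}_{\le k},\cC)\simeq\Fun([k],\cC)$, one has
\[
\Fun(\Delta^{\op}_{+,\le k},\cC)\simeq\Fun(\Delta^{\op}_{\le k},\cC)\times_{\cC}\cC^{[1]}\simeq\Fun([k],\cC)\times_{\cC}\cC^{[1]}\simeq\Fun([k+1],\cC),
\]
where the pullbacks are taken along $\colim_{\Delta^{\op}_{\le k}}$ (respectively $\ev_k$) and $\ev_0$. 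Follow this with the cone-to-cocone equivalence
\[
(W_0\to\cdots\to W_k\to X)\mapsto(X\to\cof(W_0\to X)\to\cdots\to\cof(W_k\to X)).
\]
The composite is the $k$-th truncation of $\DK(\cF_+)$. These equivalences are compatible with restriction in $k$, so you can pass to the limit over $k$.

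Alternatively, you can keep the argument global. Replace the pullback along $\colim$ by $\Fun(\cK^{\tre},\cC)\simeq\Fun(\cK\times[1],\cC)\times_{\Fun(\cK,\cC)}\cC$. Then use that Lurie's equivalence carries constant diagrams to constant diagrams, since each $\Delta^{\op}_{\le n}$ has an initial object.
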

\begin{proof}
   The existence of the functor and the corresponding formula follow directly from \autoref{prop:existenceandformuladk}. The fact that $\on{LDK}_{+}$ is an equivalence follows from the same argument given in \cite[Lem.1.6]{gammage2022perverse}. 
\end{proof}

For the construction of the equivalence of adjunctions it is more convenient to use the following variant of the Dold-Kan equivalence:

\begin{defn}\label{defn:tildecF}
        We denote by $\cJ_{n}\subset (\Delta^{\op}_{+,\leq n})_{/{[n]}}$ the full subcategory generated by injective maps and the terminal object. We have a functor $\tilde{\cF}_{+}\colon \bZ_{\geq 0} \rightarrow \Cat_{/\Delta_{+}^{\op}}$ defined as 
    \[
   \tilde{\cF}_{+}(k+1)=
    \cJ_{k} 
    \]
    where for $k+1<k+2$ we assign the obvious functor induced by $d_{k+2}$. We also note that we have a natural transformation $\tilde{\cF}_{+}\rightarrow \cF_{+}$
\end{defn}

\begin{lem}
    Let $\cC$ be a stable category. The functor $\LDK_{+}$ is equivalent to the functor $\DK(\Tilde{\cF}_{+})$.
\end{lem}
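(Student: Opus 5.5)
The natural route is \autoref{prop:equivdoldkans}, applied to the natural transformation $\alpha \colon \tilde{\cF}_{+} \to \cF_{+}$ noted at the end of \autoref{defn:tildecF}. Both functors are indexed by the linearly ordered set $\cJ=\bZ_{\geq 0}$ and land in $\Cat^{\cone}_{/\Delta_{+}^{\op}}$. So it suffices to check three things: that $\alpha$ is a morphism in $\Cat^{\cone}$, that the relevant left adjoints exist, and that $\alpha^{\times}$ is pointwise cofinal.

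\textbf{Step 1: the transformation lives in $\Cat^{\cone}$.} For $\cF_{+}(k+1)=\Delta^{\op}_{+,\leq k}$ the cone point is $[-1]$, which is terminal in $\Delta^{\op}_{+,\leq k}$, via $\Delta^{\op,\tre}_{\leq k}\simeq\Delta^{\op}_{+,\leq k}$. For $\cJ_k$ the cone point is the adjoined terminal object, which $\alpha$ sends to $[-1]$. Nothing other than the cone point maps to $[-1]$, so the square defining a morphism of $\Cat^{\cone}$ is a pullback. Compatibility with the transition maps $d_{k+1}$ and $d_{k+2}$ is a direct check on objects.

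\textbf{Step 2: existence of left adjoints.} Since $\cC$ is stable, it has finite colimits. Both $\cF^{\times}_{+}(j)=\Delta^{\op}_{\leq k}$ and $\tilde{\cF}^{\times}_{+}(j)=\cJ_k\setminus\{\ast\}$ have finite colimit-shape behaviour: the latter is a finite poset of injective maps into $[n]$ in $\Delta^{\op}$, and the former is cofinal-equivalent to a finite category via the truncated skeleton. Hence \autoref{prop:existenceandformuladk}(1) gives the required left adjoints. If $\cC$ only has finite colimits, I would deduce existence for $\cF_{+}$ from the existence for $\tilde{\cF}_{+}$ by the cofinality established in Step 3.

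\textbf{Step 3: pointwise cofinality, the main obstacle.} One must show that $\cJ_k\setminus\{\ast\}\to\Delta^{\op}_{\leq k}$ is cofinal, i.e.\ that the objects of $\cJ_k$ lying over the projection to $\Delta^{\op}_{+,\leq k}$ see all of $\Delta^{\op}_{\leq k}$ cofinally. I would prove this with Quillen's Theorem A \cite[Prop 4.1.1.3]{LurieHTT}. Fix $[m]\in\Delta^{\op}_{\leq k}$. The comma category of pairs $([m]\to[p]$ in $\Delta^{\op}$, $[p]\hookrightarrow[n])$ should be weakly contractible: it is the category of simplices of the standard simplex $\Delta^n$ under $[m]$. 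I would identify it with the poset of nonempty faces of $\Delta^n$ receiving a map from $[m]$, via image factorization into surjection followed by injection. This poset has a terminal object (the full face), or an initial one after dualising, and is therefore contractible.

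The subtle points are the variance conventions and making sure the truncations $\leq k$ match. If this direct argument gets messy, my fallback is to use \autoref{prop:existenceandformuladk}(2) to compare the total cofibers levelwise. Both compute the $k$-th skeletal cofiber $\totcof(X^{\leq k}_\bullet)$: for $\tilde{\cF}_{+}$ this is the usual semisimplicial total cofiber over the face cube. I would then invoke the classical fact that $\Delta_{s}^{\op}\to\Delta^{\op}$ is cofinal, in its truncated form. Once cofinality is in hand, \autoref{prop:equivdoldkans} gives $\DK(\tilde{\cF}_{+})\simeq\DK(\cF_{+})=\LDK_{+}$.
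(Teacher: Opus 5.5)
Your strategy matches the paper's, whose proof is just \autoref{prop:equivdoldkans} applied to $\tilde{\cF}_{+}\to\cF_{+}$, with the pointwise cofinality of $\cJ_k\setminus\{\ast\}\to\Delta^{\op}_{\leq k}$ taken from \cite[Lem.1.2.4.17]{LurieHA}. Step 1 and the final application are fine. The gap is Step 3, which carries all the content, and your argument for it is wrong.

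You wrote down the right comma category over $[m]\in\Delta^{\op}_{\leq k}$. Its objects are spans $[m]\leftarrow[p]\hookrightarrow[k]$ in $\Delta$, i.e.\ pairs $(S,\phi)$ with $\emptyset\neq S\subseteq[k]$ and $\phi\colon S\to[m]$ monotone. Its morphisms are restrictions $(S,\phi)\to(T,\phi|_T)$. But the map runs from the face \emph{to} $[m]$, so there is no image factorization to take. Nor is the category a poset of faces: different $\phi$ on the same $S$ are different objects. For $m\geq1$ it has no terminal or initial object. Already for $k=m=1$ the nerve is the zigzag
$(\{1\},0)\leftarrow(\{0,1\},00)\to(\{0\},0)\leftarrow(\{0,1\},01)\to(\{1\},1)\leftarrow(\{0,1\},11)\to(\{0\},1)$,
writing $00,01,11$ for the monotone maps $\{0,1\}\to[1]$. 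This is contractible, but not for your reason. A correct argument must also use the bound $m\leq k$: a terminal-object argument ignoring it would equally apply to $k=0$, $m=1$, where the same category is two discrete points.

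Your fallback does not repair this. If ``truncated form'' means $\Delta^{\op}_{s,\leq k}\to\Delta^{\op}_{\leq k}$, that functor is \emph{not} cofinal. For $k=1$, the constant point diagram has colimit $S^1$ over the source but a contractible colimit over the target, since $[0]$ is terminal in $\Delta$. The untruncated cofinality of $\Delta_s^{\op}\to\Delta^{\op}$ says nothing about $\Delta^{\op}_{\leq k}$-shaped colimits. The same false truncated claim underlies your Step 2 remark that $\Delta^{\op}_{\leq k}$ is ``cofinal-equivalent to a finite category via the truncated skeleton''. The correct finite replacement there is again the face cube.

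So you need the face-cube cofinality itself: cite it as the paper does, or prove it. To prove it, send $(S,\phi)$ to its graph. Let $Q(k,m)$ be $[k]\times[m]$ with $(i,j)<(i',j')$ iff $i<i'$ and $j\leq j'$. The graph map identifies the comma category with the opposite of the poset of nonempty chains in $Q(k,m)$, so it has the homotopy type of $Q(k,m)$. Then show $Q(k,m)$ is contractible for $m\leq k$ by induction on $k$. The elements $(k,j)$ with $j<m$ are maximal, and their down-sets are $Q(k-1,j)$ with $j\leq k-1$, contractible by induction, so removing them preserves the homotopy type. What remains has maximum $(k,m)$, hence is contractible.
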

\begin{proof} 
Combine \autoref{prop:equivdoldkans} together with \cite[Lem.1.2.4.17]{LurieHA}
\end{proof}

In the following, we will also need the following Dold-Kan correspondence

\begin{defn}\label{defn:TDKcons}
    For every $n \geq 0$ we denote by $\cJ_{n}^{\max}\subset \cJ_{n}$ the sub-category on the maximum preserving maps $[k]\hookrightarrow [n]$ with $k\geq 0$. Note that $\cJ_{n}^{\max}$ admits a final object.

    We have a functor $\cF_{\infty} \colon \bZ^{\simeq}_{\geq 0} \rightarrow \Cat_{/\Delta_{\infty}}$ defined as
    \[
    \cF_{\infty}(k) = 
    \cJ_{k}^{\max}.
    \]
    We denote the induced Dold-Kan correspondence by $\DK_{\infty}$.
\end{defn}

\subsection{Equivalence of Adjunctions}\label{sub:equivalence_of_adjunctions}

The goal of this section is to show that the Dold-Kan correspondences constructed in the last section combine to an equivalence of adjunctions. For this we need the following construction

\begin{const}
    Let $\tilde{\cF}_+$ be as in \autoref{defn:tildecF} and let $\tilde{\cF}_{[2]}$ be the functor obtained as $\tilde{\cF}_+(i) \times [2]= \tilde{\cF}_{[2]}(i)$. We define a subfunctor of $\tilde{\cF}_{\diamond} \subset \tilde{\cF}^{\times}_{[2]}$ by selecting for every $n+1 \in \bZ_{\geq 0}$ the  subposet $\cJ_{n}^{\diamond}\subset \cJ_{n} \times [2]$ containing:
    \begin{itemize}
        \item  Every object belonging to $\cJ_{n} \times \{0<2\}$ except the terminal object.
        \item Every object object of $(\cJ_{n}^{\max} \times \{i\} )\times \{0<1\}$ except the corresponding terminal object for $i=0,1$, where we have used the canonical equivalence $\cJ_{n}\simeq \cJ_{n}^{\max}\times [1]$
    \end{itemize}
    We conclude that we have natural transformations between the unstraightened functors 
    \[
        \cX_{\tilde{\cF}} \xrightarrow{\mu} \cX_{\tilde{\cF}_{\diamond}} \xrightarrow{\omega} \cX_{\tilde{\cF}_{[2]}} \simeq \cX_{\tilde{\cF}_{+}} \times [2],
    \]
    which are fibrewise fully-faithful.
\end{const}

\begin{prop}\label{prop:interpolatingLDK}
    The functor $\on{LDK}_+$ can be equivalently expressed as the composite
    \[
       \Gamma \colon   \Fun(\Delta_+^{\op},\cC) \xrightarrow{q^*} \Fun(\cX_{\tilde{\cF}_{+}}, \cC)\xrightarrow{\mu_*}\Fun(\cX_{\tilde{\cF}_{\diamond}},\cC) \xrightarrow{\omega_!}\Fun ( \cX_{\tilde{\cF}_{[2]}},\cC) \xrightarrow{s_{{\cF_{[2]}}}} \Fun(\bZ_{\geq 0},\cC).
    \]
\end{prop}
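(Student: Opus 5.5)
We reduce to the alternative formula of \cref{rmk:alternativeformula} for $\DK(\tilde{\cF}_+)$, which by the preceding lemma agrees with $\LDK_+$. That remark expresses $\DK(\tilde{\cF}_+)$ as
\[
q^* \;\to\; \alpha_* \;\to\; \beta_! \;\to\; \ev_1 \;\to\; s^*,
\]
where $\alpha \colon \cX_{\tilde{\cF}_+}\to \cX_{\tilde{\cF}_{+,\square}}$ and $\beta\colon \cX_{\tilde{\cF}_{+,\square}} \to \cX_{\tilde{\cF}_+}\times[1]$ come from the oplax colimit of $\iota_{\tilde{\cF}_+}$. The plan is to compare this with the composite $\Gamma$, which uses $\cX_{\tilde{\cF}_\diamond}\subset \cX_{\tilde{\cF}_+}\times[2]$ instead of $\cX_{\tilde{\cF}_{+,\square}}\subset \cX_{\tilde{\cF}_+}\times[1]$. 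Both formulas evaluate at the fibrewise terminal section, so all Kan extensions can be checked pointwise at $s(j)$ and at the fibre over $j=n+1$. By the pullback argument in the proof of \autoref{prop:equivdoldkans} (linearity of $\bZ_{\geq 0}$), the relevant slice categories reduce to the fibre $\cJ_n$, together with its decorated versions $\cJ_n\times[1]$ and $\cJ_n^\diamond\subset\cJ_n\times[2]$.

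\textbf{Step 1: compute $\mu_*$ fibrewise.} On the part $\cJ_n\times\{0<2\}$ minus the terminal object, $\mu_*$ restricts $X$ along $q$ on the $0$-level. On the $2$-level it yields the value at the objects themselves, so the terminal vertex gets $X$ of the cone point in the left Kan extension. On the part $\cJ_n^{\max}\times\{i\}\times\{0<1\}$, the value at level $1$ is the limit of $X$ over the corresponding right Kan extension slice.

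Using $\cJ_n\simeq\cJ_n^{\max}\times[1]$, the level-$1$ layer computes the right Kan extension along the face that forgets the maximum. This right Kan extension is exactly what converts the injective-maps diagram into its "normalized" form.

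\textbf{Step 2: compute $\omega_!$ at $s(n+1)\times\{2\}$.} This is a colimit over $\cJ_n^\diamond$ minus the terminal vertex. We decompose the indexing poset as a pushout of the $\{0<2\}$ part and the $\{0<1\}$ part along level $0$. Cofinality, via \cite[Prop 4.1.1.3]{LurieHTT}, then identifies this colimit with a cofiber of a cofiber, i.e. an iterated total cofiber.

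\textbf{Step 3: identify the result with $\totcof(X^{\le n})$.} Compare this iterated total cofiber with the total cofiber given by \autoref{prop:existenceandformuladk}(2) for $\tilde{\cF}_+$, using the pasting of pushout squares as in \autoref{prop:laxlimitmagic}. The $[2]$-direction is an interpolation: it splits the single cofiber of the counit into two pushout squares, one along $\cJ_n^{\max}$ and one along its complement. We then check naturality in $n$: the transition maps $d_{n+2}$ preserve the subposets $\cJ_n^\diamond$, so the equivalences assemble over $\bZ_{\geq0}$ by functoriality of the unstraightening.

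\textbf{Main obstacle.} The hardest part is Step 2: proving that the restriction to $\cJ_n^\diamond$ is cofinal enough that the left Kan extension $\omega_!$, applied after $\mu_*$, reproduces $\alpha_*$ followed by $\beta_!$. I would first try to exhibit a functor $\cX_{\tilde{\cF}_{+,\square}}\to\cX_{\tilde{\cF}_\diamond}$ over $[1]\to[2]$ (the map $0\mapsto 0$, $1\mapsto 2$) and show that it induces equivalences on the relevant right and left Kan extensions. For the left Kan extensions this uses cofinality of slices, as in \autoref{prop:equivdoldkans}. For the right Kan extensions it uses that the level-$1$ objects are coinitial over their images.
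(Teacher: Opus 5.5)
\textbf{Verdict: there is a genuine gap.} Your overall frame is the paper's: compare $\Gamma$ with \cref{rmk:alternativeformula} via the functor $i\colon \cX_{\tilde{\cF}_{\square}}\to\cX_{\tilde{\cF}_{\diamond}}$ induced by $\{0<2\}\hookrightarrow[2]$. However, the explicit computation in Steps 1--3 is wrong, and the decisive step is missing.

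\textbf{Step 1 miscomputes $\mu_*$.} Morphisms in $\cX_{\tilde{\cF}_{+}}\times[2]$ never decrease the $[2]$-coordinate, and $\mu$ is the inclusion at level $0$. So for every object $x$ at level $1$ or $2$, the comma category $x\downarrow\mu$ indexing the pointwise right Kan extension is empty. Hence $\mu_*q^*X$ is $X$ on level $0$ and $0$ on levels $1$ and $2$. This is exactly parallel to $\alpha_*$, which is extension by zero (cf.\ $\ev^{R}_{\cA}$ in \cref{prop:laxlimitmagic}). It does not return $X(S)$ at $(S,2)$, nor a limit of $X$ at level $1$, nor any ``normalized'' diagram. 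The ``cofiber of a cofiber'' in Step 2 and the ``interpolation'' in Step 3 are therefore computed from the wrong diagram and do not amount to an argument. In fact the level-$1$ part of $\cJ_n^{\diamond}$ plays no role in the value of $\Gamma$; it only matters later, in \cref{const:laxcommuting}.

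\textbf{The missing step.} The real content is what you call the main obstacle, and the tool you propose does not suffice. After reducing to fibres (smoothness of cocartesian fibrations, as in \cref{prop:equivdoldkans}), the Beck--Chevalley condition $j^*\omega_!\simeq\beta_!\, i^*$ at the fibrewise terminal object requires the fibrewise map
\[
\varphi\colon (\cJ_n\times[1])^{\times}\lra \cJ_n^{\diamond},\qquad (S,0)\mapsto (S,0),\quad (S,1)\mapsto (S,2),
\]
to be cofinal. Here $(\cJ_n\times[1])^{\times}$ is $\cJ_n\times[1]$ with its terminal object removed; this is the fibrewise slice indexing $\beta_!$. In \cref{prop:equivdoldkans} fibrewise cofinality is a \emph{hypothesis}, so invoking that argument does not supply it.

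The missing observation is that $\varphi$ has a left adjoint $(S,\ell)\mapsto (S,s_1(\ell))$, induced by $s_1\colon[2]\to[1]$. To check this, verify $s_1(\ell)\le m \iff \ell\le 2m$. So $\varphi$ is a right adjoint and hence cofinal.

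You also need $i^*\mu_*\simeq\alpha_*$. This holds because both are extensions by zero: the relevant comma categories are either empty or have an initial object. No coinitiality at level $1$ is involved. Evaluating at the level-$2$ section then gives
\[
\Gamma\simeq s^*_{\tilde{\cF}_{+}}\circ\ev_1\circ\beta_!\circ\alpha_*\circ q^*\simeq \on{LDK}_{+}.
\]

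\textbf{Existence of the Kan extensions.} The statement implicitly asserts that $\mu_*$ and $\omega_!$ exist for a merely stable $\cC$, and you should say why. The paper does this by a case analysis of the objects outside the image of $\omega$. Alternatively, note that all comma categories involved are empty, have an initial object, or are finite posets.
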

\begin{proof}
    We commence the proof by observing that $\mu_*$ is defined since the corresponding slices indexing the limits in the point-wise formula either have an initial element or are empty. To verify the existence of $\omega_!$ we pick an element $x \in \cX_{\tilde{\cF}_{[2]}}$ which is not in the image of $\omega$. If $x$ is terminal in the corresponding fibre the proof proceeds totally analogously as in the proof of \autoref{prop:equivdoldkans}. We will deal now with two separate cases:
    \begin{itemize}
        \item The object $x$ corresponds to the terminal object in $(\cJ_{n}^{\max} \times \{0\} )\times \{0<1\}$. Let $w \in \cX_{\tilde{\cF}_{+}}$ be the terminal object in the fibre over $n$. Then it follows that we have an identification ${\cX_{\tilde{\cF}_{+}^{\times}}}_{/w} \simeq {\cX_{\tilde{\cF}_{\diamond}}}_{/x}$ and so our argument follows from \autoref{prop:existenceandformuladk}.
        \item The object $x$ corresponds to the terminal object in $(\cJ_{n}^{\max} \times \{1\} )\times \{0<1\}$. In this case we can produce an auxiliary fibration $\cX_{\tilde{\cF}_{\downarrow}}$ obtained from ${\cX_{\tilde{\cF}_{\diamond}}}$ by adding the terminal objects of $(\cJ_{n}^{\max} \times \{0\} )\times \{0<1\}$. We obtain a factorization of the form $\omega=u \circ v$ and the previous bullet point implies that $v_!$ exists. The existence of $u_!$ follows from precisely the same argument as in the previous case.
    \end{itemize}
    To finish the proof we will show that $\Gamma \simeq \on{LDK}_+$. For this we will make use of the description of the former functor given in \autoref{rmk:alternativeformula}. Borrowing the notation from loc. cit, it follows that the functor $\{0<2\} \to [2]$ induces a commutative diagram of fibrations
   \[\begin{tikzcd}
    & {\cX_{\tilde{\cF}_{\square}}} & {\cX_{\tilde{\cF}_{+}}\times [1]} \\
    {\cX_{\tilde{\cF}_{+}}} & {\cX_{\tilde{\cF}_{\diamond}}} & {\cX_{\tilde{\cF}_{+}}\times[2]}
    \arrow["\beta", from=1-2, to=1-3]
    \arrow["i", from=1-2, to=2-2]
    \arrow["j", from=1-3, to=2-3]
    \arrow["\alpha", from=2-1, to=1-2]
    \arrow["\mu"', from=2-1, to=2-2]
    \arrow["\omega"', from=2-2, to=2-3]
\end{tikzcd}\]
   It is easy to verify that $i^* \circ \mu_* \simeq \alpha_*$ so we will focus our attention into showing that the diagram
   \[
       \begin{tikzcd}
           \Fun({\cX_{\tilde{\cF}_{+}}\times[2]}, \cC) \arrow[d,"j^*"] \arrow[r,"\omega^*"] &  \Fun({\cX_{\tilde{\cF}_{\diamond}}}, \cC) \arrow[d,"i^*"] \\
           \Fun({\cX_{\tilde{\cF}_{+}}\times [1]},\cC) \arrow[r,"\beta^*"] & \Fun({\cX_{\tilde{\cF}_{\square}}},\cC)
       \end{tikzcd}
   \]
   is horizontally left adjointable. Let $(\cJ_n \times [1])^{\times}$ denote the poset obtained from the cartesian product by removing the terminal object. Unraveling the definitions, we see that it will enough to show that the map $\varphi \colon (\cJ_n \times [1])^{\times} \to \cJ_{n}^{\diamond}$ is cofinal. Fortunately, the map $\varphi$ admits a left adjoint induced by the map $s_1 \colon [2] \to [1]$ and so $\varphi$ is cofinal. The result now follows easily.
\end{proof}

\begin{const}\label{const:laxcommuting}
    Let $x_{n} \in \cJ_{n}^{\max}$ denote the maximal element. There exists a functor $\beth \colon \bZ_{\geq 0}^{\simeq }\times [1]^{2} \to \cX_{\tilde{\cF}_{[2]}}$ selecting the diagram $(\{x_n+1\}\times [1]) \times \{1<2\}$. After a minor unraveling the proof of \autoref{prop:interpolatingLDK}, we can view the composite
    \[
        \Fun(\Delta_+^{\op},\cC) \xrightarrow{q^*} \Fun(\cX_{\tilde{\cF}_{+}}, \cC)\xrightarrow{\mu_*}\Fun(\cX_{\tilde{\cF}_{\diamond}},\cC) \xrightarrow{\omega_!}\Fun ( \cX_{\tilde{\cF}_{[2]}},\cC) \xrightarrow{\beth^*} \Fun(\bZ_{\geq 0}^{\simeq},\cC)^{[1]^{2}},
    \]
    as a diagram $D \colon [1]\times [1] \to \Fun( \Fun(\Delta_+^{\op},\cC),\Fun(\bZ_{\geq 0}^{\simeq},\cC))$ of the form
    \[
        \begin{tikzcd}
            \on{DK}_{\infty}\iota^* \arrow[d] \arrow[r] & (\on{LDK}_{+})_{|\bZ_{\geq 0}^{\simeq}} \arrow[d] \\
            0 \arrow[r] & (\on{LDK}_{+}\langle 1 \rangle)_{|\bZ_{\geq 0}^{\simeq}}
        \end{tikzcd}
    \]
    where we are denoting $\iota^* \colon \Fun(\Delta_+^{\op},\cC) \to \Fun(\Delta_{\infty}^{\op},\cC)$ and the right-most vertical map is induced by the natural transformation $\Fun(\bZ_{\geq 0},\cC) \times [1] \to \Fun(\bZ_{\geq 0}, \cC)$ between the identity and the functor $\langle 1 \rangle$ that shifts the filtration by $1$.
\end{const}

\begin{prop}\label{prop:TDKLDK}
    There exists a commutative diagram of stable categories
    \[
        \begin{tikzcd}
            \Fun(\Delta_{+}^{\op},\cC) \arrow[d,"\iota^*"] \arrow[r,"\on{LDK}"] &  \Fun(\bZ_{\geq 0},\cC) \arrow[d,"\langle 1\rangle\gr_{\geq 0}{[-1]}"] \\
            \Fun(\Delta_{\infty}^{\op},\cC) \arrow[r,"\on{DK}_{\infty}"] & \Fun(\bZ^{\simeq}_{\geq 0},\cC)
        \end{tikzcd}
    \]
    which is vertically left adjointable and where the right-most vertical functor is given by the associated graded functor composed with the loops and shift by $1$-functor.
\end{prop}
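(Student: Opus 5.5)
The plan is to read off both the commutativity and the adjointability from the square $D$ of \autoref{const:laxcommuting}. That square is bicartesian: by \autoref{prop:interpolatingLDK} and the construction of $\omega_!$, the right vertical edge $(\on{LDK}_+)_{|\bZ^{\simeq}_{\geq 0}} \to (\on{LDK}_+\langle 1\rangle)_{|\bZ^{\simeq}_{\geq 0}}$ is the structure map of the filtration.

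\textbf{Commutativity.} Since the lower left corner of $D$ is $0$, we get a natural equivalence
\[
\on{DK}_{\infty}\iota^* \simeq \fib\big(\on{LDK}_+ \to \on{LDK}_+\langle 1\rangle\big)_{|\bZ^{\simeq}_{\geq 0}}.
\]
The fiber of $f_k \colon X_k \to X_{k+1}$ is $\cof(f_k)[-1]$, and $\cof(f_k) = \gr(X)_{k+1} = (\langle 1\rangle \gr_{\geq 0} X)_k$. Hence the right-hand side is $\langle 1\rangle\gr_{\geq 0}[-1]\circ \on{LDK}_+$, which gives the commutative square. The main bookkeeping here is checking that the identifications of $D$'s corners with $\on{DK}_\infty\iota^*$ and with the shifted $\on{LDK}_+$ are natural in $X$. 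This holds because every functor in the composite defining $D$ ($q^*,\mu_*,\omega_!,\beth^*$) is a functor of $X$. For the lower left corner, I would use \autoref{prop:existenceandformuladk}(2) for the fibration $\cF_\infty$, together with the cofinality argument of \autoref{prop:equivdoldkans}.

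\textbf{Adjointability.} The left adjoint of $\iota^*$ is $\iota_! = \Sigma^\infty_1(t)$-type left Kan extension along $\Delta_\infty^{\op}\to\Delta_+^{\op}$. The left adjoint of $\langle 1\rangle\gr_{\geq 0}[-1]$ is $\triv_{\geq 0}[1]\langle -1\rangle$, obtained from $\gr \dashv \triv$ and the fact that shifts are equivalences. Because both horizontal functors are equivalences (\autoref{prop:LDK}, and the analogous statement for $\on{DK}_\infty$, which is a levelwise total-cofiber equivalence), the Beck--Chevalley transformation
\[
\iota_!\,\on{DK}_\infty^{-1} \to \on{LDK}^{-1}\,\triv_{\geq 0}[1]\langle -1\rangle
\]
is just the mate of a commutative square of equivalences. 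It is therefore invertible as soon as the square of right adjoints commutes, which is what we proved above. More precisely, when the horizontal functors are equivalences, vertical left adjointability reduces to the statement that the mate of the commuting square of right adjoints, taken with respect to the inverse equivalences, is an equivalence; and this is automatic, because the left adjoints are determined up to unique equivalence.

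\textbf{Main obstacle.} The step I expect to be hardest is the honest identification of the corner of $D$ with $\on{DK}_\infty\iota^*$. This requires a cofinality check that the slice $\cX_{\tilde\cF_\diamond}$ over the $(x_{n},0,1)$-type object restricts to $\cJ_n^{\max}$. If that fails, I would instead compute $\iota_!$ explicitly on representables, and verify directly that both composites send the generators $\Sigma^\infty_0\Delta_\infty([n],-)$ to the same graded object.
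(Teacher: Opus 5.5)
Your commutativity step is the paper's argument: the bicartesian square of Construction~\ref{const:laxcommuting} gives $\DK_\infty\iota^*\simeq\fib(\on{LDK}_+\to\on{LDK}_+\langle 1\rangle)\simeq\langle 1\rangle\gr_{\geq 0}[-1]\,\on{LDK}_+$. One caveat there: identifying the right vertical edge as the filtration map does not show that the square is a pushout. That requires the total-cofibre/cube analysis, which the paper delegates to Dyckerhoff--Jasso--Walde.

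\textbf{The gap is in the adjointability step.} The general principle you use is correct: a commuting square whose horizontal functors are equivalences is automatically adjointable. But your argument then depends on $\DK_\infty$ being an equivalence. You only assert this as ``the analogous statement'', and it is not available at this point. In the paper, invertibility of $\DK_\infty$ is Theorem~\ref{thm:DK+}, and its proof \emph{uses} the adjointability of this very square: the square is a morphism of adjunctions, both adjunctions are comonadic, and $\on{LDK}_+$ is an equivalence. Relative to the paper your route is therefore circular. Read independently, it pushes all the work into an unproved Dold--Kan theorem for $\Delta_\infty^{\op}$-objects. You would also have to identify that equivalence with the specific functor $\DK_\infty$, defined by total cofibres over $\cJ_n^{\max}$.

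\textbf{The missing idea is to compute the Beck--Chevalley mate directly.} This needs no knowledge that $\DK_\infty$ is invertible. The relevant left adjoints are as follows.
\begin{itemize}
\item $t^*\dashv\iota^*$: the left adjoint is restriction along $t$, i.e.\ left Kan extension along $\iota^{\op}$. On representables this is $\Sigma^\infty_1(\iota)$, not $\Sigma^\infty_1(t)$.
\item $\triv_{\geq 0}\dashv\langle 1\rangle\gr_{\geq 0}[-1]$: the functor $Z\mapsto\on{Map}(Z,\fib(X_k\to X_{k+1}))$ is corepresented by $Z$ placed in degree $k$ with zero maps.
\end{itemize}
Your $\triv_{\geq 0}[1]\langle -1\rangle$ is not this left adjoint. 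The adjunction $\gr\dashv\triv$ produces a \emph{right} adjoint of $\langle 1\rangle\gr[-1]$, and $\langle -1\rangle$ is not even defined on $\bZ_{\geq 0}$-indexed objects.

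With the correct adjoints, evaluate the mate on $Y\colon\Delta_\infty^{\op}\to\cC$ at level $n$. It unwinds to
$\totcof(Y|_{\cJ_n^{\max}})\to\totcof(Y_{\bullet+1}|_{\cJ_n^{\max}})\to\totcof(Y|_{\cJ_n^{\max}})$,
where the first map is induced by the extra degeneracies $s_{\bullet+1}$ and the second by the faces $\partial_{\bullet+1}$. The composite is the identity, so the mate is an equivalence. Corollary~\ref{cor:adj} needs these correct left adjoints $t^*$ and $\triv_{\geq 0}$ to match the resulting adjunction with $\sY(+)$.
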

\begin{proof}
\autoref{const:laxcommuting} provides us the the existence of a laxly commutative square. In order to verify that the square is commutative we need to show that the square we have selected is actually bicartesian. This follows easily from our construction together the results of \cite[Appendix A]{dyckerhoff2019simplicial}.

It remains to show that the diagram is vertically right adjointable. A minor unraveling shows that the mate of $\eta$ evaluated on $Y_{\bullet}:\Delta_{\infty}^{\op}\rightarrow \cC$ is given by the composite
\[
\triv\circ\DK_{\infty}(Y_{\bullet})(n)\simeq \totcof(Y_{\bullet}\vert_{\cJ^{\max}_{n}}) \rightarrow \totcof(Y_{\bullet+1}\vert_{\cJ^{\max}_{n}}) \rightarrow 
\totcof(Y_{\bullet}\vert_{\cJ^{\max}_{n}}) \simeq \LDK_{+}\circ t^{\ast}(Y_{\bullet})(n)
\]
where the maps are induced by the commutative diagram
\[
\begin{tikzcd}
    Y_{\bullet}\arrow[r,swap,"s_{\bullet +1}"] \arrow[rr,bend left= 30,"\id"]&  Y_{\bullet+1} \arrow[r,swap,"\partial_{\bullet+1}"] & Y_{\bullet}
\end{tikzcd}
\]
In particular, the composite is an equivalence and the diagram is vertically adjointable.
\end{proof}

\begin{thm}\label{thm:DK+}
    The commutative diagram in \autoref{prop:TDKLDK} defines an equivalence in $\Adj(\St)$.
\end{thm}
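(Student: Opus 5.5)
The plan is to use that a morphism in $\Adj(\St)$ is an equivalence as soon as its components on the source and target categories are equivalences. Concretely, the square of \autoref{prop:TDKLDK} is a morphism of adjunctions
\[
\left(\iota^*\dashv \text{its left adjoint}\right)\longrightarrow\left(\langle 1\rangle\gr_{\geq 0}[-1]\dashv \text{its left adjoint}\right),
\]
with horizontal components $\on{LDK}_+$ and $\on{DK}_\infty$. Its inverse will be the square formed by the inverse equivalences. That inverse square is automatically adjointable: its mate is obtained from the original, invertible, mate by conjugating with the equivalences $\on{LDK}_+$ and $\on{DK}_\infty$. Equivalently, an invertible $1$-morphism of $\Fun(\Adj,\St)$ is detected objectwise, because $\Adj$ has the two objects $\pm$ and equivalences in functor $2$-categories are detected pointwise. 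Hence two facts suffice: $\on{LDK}_+$ is an equivalence, and $\on{DK}_\infty$ is an equivalence.

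The first fact is \autoref{prop:LDK}, applied with $\cC$ stable, together with the identification $\on{LDK}_+\simeq\on{DK}(\tilde\cF_+)$, which guarantees that it is the same functor appearing in the square. The second fact is the real content. By \autoref{prop:existenceandformuladk}, $\on{DK}_\infty(Y)(n)$ is the total cofiber of $Y$ restricted to $\cJ_n^{\max}$, that is, of the diagram on injective maximum-preserving maps $[k]\hookrightarrow[n]$.

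To show $\on{DK}_\infty$ is an equivalence, I would avoid a direct Dold–Kan argument and instead exploit the square itself. Restriction along the section of \autoref{prop:TDKLDK} gives $\on{DK}_\infty\circ\iota^*\simeq\langle1\rangle\gr_{\geq0}[-1]\circ\on{LDK}_+$. The mate computed there shows that $\on{DK}_\infty\simeq\langle1\rangle\gr_{\geq0}[-1]\circ\on{LDK}_+\circ(\iota^*)^L$ up to the invertible mate. I would then argue in three steps:
\begin{enumerate}
\item[(a)] $\on{DK}_\infty$ is fully faithful: its left adjoint composed with it is identified, via the adjointable square, with a composite of equivalences and the counit of $(\iota^*)^L\dashv\iota^*$, which is invertible because $t\colon\Delta_+\to\Delta_\infty$ with $\iota$ fully faithful makes $(\iota^*)^L$ fully faithful.
\item[(b)] $\on{DK}_\infty$ is essentially surjective: every graded object lies in the image of $\gr_{\geq0}$ composed with the shifts (take the trivial filtration, i.e. apply $\triv$), so it is hit through the commuting square.
\end{enumerate}

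Alternatively, (a) and (b) can be replaced by the conservativity/generation argument of \cite[Lem.1.6]{gammage2022perverse}, run for $\Delta_\infty^{\op}$: induct on the grading using the skeletal filtration of $\Delta_\infty$, where $\cJ_n^{\max}\simeq\cJ_{n-1}$ reduces the computation to the augmented case already known.

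I expect the main obstacle to be establishing that $\on{DK}_\infty$ is an equivalence compatibly with the adjoints, that is, checking that the mate identification in (a) really produces the unit of $\on{DK}_\infty$'s adjunction rather than just some equivalence. If this becomes delicate, I would fall back on the inductive argument via $\cJ_n^{\max}\simeq\cJ_{n-1}$ and the total cofiber formula, which gives the equivalence directly. Adjointability of the square, already proven in \autoref{prop:TDKLDK}, then finishes the argument.
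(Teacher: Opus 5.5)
\paragraph{Verdict.} There is a genuine gap: the step showing that $\DK_\infty$ is an equivalence does not go through.

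\paragraph{What matches the paper.} Your reduction is the same as the paper's. An adjointable square is an equivalence in $\Adj(\St)$ once both horizontal components are equivalences, and $\LDK_+$ is one by \autoref{prop:LDK}. So everything rests on showing that $\DK_\infty$ is an equivalence.

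\paragraph{Where it fails.} Step (a) is false.
\begin{itemize}
\item You assume $\iota\colon\Delta_\infty\to\Delta_+$ is fully faithful, but it is a non-full subcategory inclusion.
\item The left adjoint $t^*$ of $\iota^*$ is not fully faithful. Indeed $\iota^*t^*Y\simeq Y_{\bullet+1}$ is the d\'ecalage, and neither the unit nor the counit of $t^*\dashv\iota^*$ is invertible. On the graded side you can see this from $\gr(\triv G)_n\simeq G_n\oplus G_{n-1}[1]$.
\item For the same reason your formula $\DK_\infty\simeq\langle1\rangle\gr_{\geq0}[-1]\circ\LDK_+\circ t^*$ is wrong: the right-hand side is $\DK_\infty$ precomposed with the d\'ecalage.
\item The adjointability in \autoref{prop:TDKLDK} concerns the \emph{vertical} adjoints. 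It tells you nothing about a left adjoint of $\DK_\infty$, whose existence you have not established anyway.
\end{itemize}

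\paragraph{What the square does give.} $\DK_\infty$ is conservative. This follows from $\LDK_+\circ t^*\simeq\triv'\circ\DK_\infty$, where $\triv'$ is the left adjoint of $\langle1\rangle\gr_{\geq0}[-1]$, together with conservativity of $t^*$.

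$\DK_\infty$ is also essentially surjective, which is your (b). The preimage must be a split filtration such as $\bigoplus_{k\le n}G_k$, not $\triv G$, since $\gr\circ\triv\not\simeq\id$.

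However, conservative plus essentially surjective does not imply fully faithful.

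\paragraph{The missing idea.} The paper uses a comonadicity argument. Both vertical adjunctions are comonadic: the left adjoints $t^*$ and $\triv'$ are conservative ($t$ being essentially surjective) and preserve the relevant limits. A morphism of adjunctions whose top component $\LDK_+$ is an equivalence then identifies the two comonads and their categories of coalgebras. This forces $\DK_\infty$ to be an equivalence.

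Barr--Beck--Lurie needs the relevant totalizations, so the paper runs this argument for $\Ind(\cC)$. It then checks by induction on $k$ that any $F\colon\Delta_\infty^{\op}\to\Ind(\cC)$ with $\DK_\infty(F)$ valued in $\cC$ is itself valued in $\cC$. The inductive step writes $F(k)$ as a finite limit of a cube whose other vertices already lie in $\cC$.

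\paragraph{Your fallback.} It does not replace this argument. The isomorphism $\cJ_n^{\max}\simeq\cJ_{n-1}$ only matches the indexing cubes in the total-cofiber formula. It gives no comparison between $\Fun(\Delta_\infty^{\op},\cC)$ and $\Fun(\Delta_+^{\op},\cC)$, so it is not a reduction to the augmented case. Making it work would require a full skeletal Dold--Kan argument for $\Delta_\infty^{\op}$.
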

\begin{proof}
    We showed in in \autoref{prop:TDKLDK} that the diagram is adjointable which precisely says that it defines a morphism of adjunctions. Invoking \autoref{prop:LDK} we are left to show that $\DK_{\max}$ is also an equivalence. Note that also by \autoref{prop:LDK} and the fact that our diagram is a morphism of adjunctions it will suffice to show that both adjunctions are comonadic. To this end we will verify the conditions of the monadicity theorem found in \cite[Thm.4.7.3.5]{LurieHA}.

     We observe that since $\on{t}\colon \Delta_+ \to \Delta_{\infty}$ is essentially surjective it follows that $t^{*}$ is conservative and similarly for the left adjoint $\triv$. It is also clear that both functors are (co)limit preserving. The conditions in \cite{LurieHA} thus hold provided $\cC$ is sufficiently complete. We consider the commutative diagram
     \[
         \begin{tikzcd}
             \Fun(\Delta_\infty^{\op},\cC) \arrow[r] \arrow[d] & \Fun(\bZ^{\simeq}_{\geq 0},\cC) \arrow[d] \\
              \Fun(\Delta_\infty^{\op},\on{Ind}(\cC)) \arrow[r,"\simeq"] &  \Fun(\bZ^{\simeq}_{\geq 0},\on{Ind}(\cC))
         \end{tikzcd}
     \]
     where the vertical functors and hence the horizontal functors are fully-faithful. To finish the proof we will need to show that bottom functor has the following property:
     \begin{itemize}
         \item Let $F \colon \Delta_{\infty}^{\op} \to \Ind(\cC)$ such that $\DK_{\max}(F)$ takes values in $\cC$. Then so does $F$. 
     \end{itemize}
     We will verify that $F(k) \in \cC$ inductively on $k \in \Delta_\infty$. The case $k=0$ is clear. For the remaining cases it follows that $F(k)$ is the totalization of a cube where all objects except the initial vertex belong to $\cC$ by the induction hypothesis. Since the functor $\cC \to \Ind(\cC)$ preserves limits and cocartesian cubes are also cartesian in $\cC$ the claim follows.
\end{proof}

\begin{cor}\label{cor:adj}
    The commutative diagram in \autoref{prop:TDKLDK} is $2$-fold vertically left adjointable. In particular, it defines an equivalence of adjunctions between $(\gr_{\geq 0} \vdash \triv_{\geq 0})$ and $\sY(+)$.
\end{cor}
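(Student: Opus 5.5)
By \autoref{thm:DK+}, the square of \autoref{prop:TDKLDK} is already vertically left adjointable, and both horizontal functors $\on{LDK}_+$ and $\DK_\infty$ are equivalences. The plan is to deduce the second adjointability, and then the identification with $\sY(+)$, from these two facts together with the adjunction $\Sigma^\infty_1(t)\dashv\Sigma^\infty_1(\iota)$ at the level of presheaves.

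First I would name the vertical functors. The left column is $\iota^*\colon\Fun(\Delta_+^{\op},\cC)\to\Fun(\Delta_\infty^{\op},\cC)$. It has a left adjoint $t^*$, coming from the adjunction $t\dashv\iota$ of \autoref{def:Y+Y-}, since precomposition reverses adjunctions. It also has a left adjoint $\iota_!$, left Kan extension, when $\cC$ is cocomplete enough; the Ind-embedding trick from the proof of \autoref{thm:DK+} can be reused to arrange this. The right column is $\langle1\rangle\gr_{\geq0}[-1]$, and its adjoints can be written down explicitly: $\triv_{\geq0}$ shifted appropriately on one side, and the analogous ``cofree'' filtered object on the other.

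Next I would use the following general fact: if the horizontal functors in a commutative square are equivalences, then adjointability in any direction is automatic. The mate is then conjugate to the unit or counit of a transported adjunction, so it is invertible. Adjoints may therefore be transported across $\on{LDK}_+$ and $\DK_\infty$ indefinitely, which gives $2$-fold (indeed $n$-fold) adjointability at once. The only real content is that the transported adjoints agree with the named ones. For the first adjoint this is the explicit mate computation already done in \autoref{prop:TDKLDK}. For the second I would either run the dual version of that computation, or simply observe that adjoints are unique up to equivalence.

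Finally I would specialize to $\cC=\Sp$ and restrict to compact objects, that is, to the smallest stable subcategories generated by representables. Under $\Sigma^\infty_1$, the adjunction $\sY(+)$ is the adjunction $\Sigma^\infty_1(t)\dashv\Sigma^\infty_1(\iota)$ between $\Sigma^\infty_1(\Delta_+)$ and $\Sigma^\infty_1(\Delta_\infty)$. On spectral presheaves, $\Sigma^\infty_1(t)$ is left Kan extension along $t$, which is the restriction $\iota^*$. Hence $\sY(+)$ is the adjunction $\iota^*\dashv t^*$ restricted to the stable subcategories generated by the representables. Transporting through the Dold–Kan equivalences yields $\gr_{\geq0}\dashv\triv_{\geq0}$, after absorbing the twist $\langle1\rangle[-1]$ into the choice of identification.

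The main obstacle I expect is the bookkeeping. One must check that the Dold–Kan equivalences carry the compact or representable-generated subcategories onto exactly $\fSp_{\geq0}$ and $\gSp_{\geq0}$ as meant in \cref{eq:adjunctions}. One must also check that the shift $\langle1\rangle[-1]$ on the right column is harmless. For the first point I would compute $\on{LDK}_+$ and $\DK_\infty$ on representables: these should be the filtered and graded spheres, which generate the relevant categories of finite objects. For the second, the shift is an autoequivalence of the graded side, so it can be absorbed into the identification.
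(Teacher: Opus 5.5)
Your mechanism for the adjointability is the same as the paper's. Once $\on{LDK}_+$ and $\DK_\infty$ are equivalences, every mate is invertible, so adjoints can be transported across the square and iterated.

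The gap is in matching these adjoints with $\sY(+)$, where the variance goes wrong. From $t\dashv\iota$ one gets $\iota^{\op}\dashv t^{\op}$. Hence on presheaves $t^*\dashv\iota^*$ (as you say at first), $\Sigma^\infty_1(\iota)=(\iota^{\op})_!\simeq t^*$, and $\Sigma^\infty_1(t)=(t^{\op})_!$ is the \emph{left adjoint of} $t^*$.

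Your claim that left Kan extension along $t$ is $\iota^*$ is the covariant formula on $\Fun(\Delta_+,\cC)$, and it is false for presheaves. Concretely, $\iota^*$ kills the representable at $[-1]$, since $\Delta_+([m],[-1])=\emptyset$ for $m\geq 0$. But $\Sigma^\infty_1(t)$ sends that representable to the representable at $[0]$. The asserted adjunction $\iota^*\dashv t^*$ also contradicts your own $t^*\dashv\iota^*$. Finally, the ``second'' adjoint $\iota_!$ you introduce is just $t^*$ again, and it exists with no cocompleteness or Ind argument.

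Thus $\sY(+)=\bigl((t^{\op})_!\dashv t^*\bigr)$ consists of the second and first left adjoints of the left column. The matching functors on the right are the first and second left adjoints of $\langle 1\rangle\gr_{\geq 0}[-1]$, namely $\triv_{\geq 0}$ and $\gr_{\geq 0}$, not a ``cofree'' right adjoint. This is exactly why the statement is about $2$-fold \emph{left} adjointability:
\begin{itemize}
\item the mate from \autoref{prop:TDKLDK} matches $t^*$ with $\triv_{\geq0}$;
\item the second mate, invertible by the horizontal-equivalence argument, matches $(t^{\op})_!$ with $\gr_{\geq 0}$;
\item no shift appears, and the existence of $(t^{\op})_!$ for arbitrary stable $\cC$ follows by transporting $\gr_{\geq0}$.
\end{itemize}

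Your fallback of absorbing $\langle1\rangle[-1]$ cannot rescue the misidentification. First, $\langle 1\rangle$ is not invertible on $\Fun(\bZ^{\simeq}_{\geq 0},\cC)$, since it discards degree $0$. More decisively, $\iota^*$ is not conservative (it forgets the augmentation) while $\gr_{\geq0}$ is. So no adjunction with left adjoint $\iota^*$ is equivalent to $\gr_{\geq 0}\dashv\triv_{\geq 0}$.
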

\begin{proof}
    It follows from Proposition~\ref{prop:TDKLDK} that the square is vertically right adjointable. Since the upper horizontal morphisms are equivalences, it follows that the resulting diagram is itself horizontably right adjointable. As a result, we obtain a square consisting of right adjoints. It follows that the invertible $2$-morphism obtained from this square by passing to adjoints is the required mate. 
\end{proof}

\begin{thm}\label{thm:DK-}
    There exists a commutative diagram of stable categories
    \[
    \begin{tikzcd}
        \Fun(\Delta_{+},\cC) \arrow[r,"\simeq"] \arrow[d,swap,"\iota^{\ast}"] & \Fun(\bZ_{\leq 0},\cC) \arrow[d,"\gr_{\leq 0}"] \\
        \Fun(\Delta_{\infty},\cC) \arrow[r,"\simeq"] & \Fun(\bZ_{\leq 0}^{\simeq},\cC)
    \end{tikzcd}
    \]
    that is vertically right and left adjointable. In particular, we obtain an equivalence of adjunctions of $\triv_{\leq 0}[-1]\{1\}\vdash \gr_{\leq 0}$ with $\sY(-)$
\end{thm}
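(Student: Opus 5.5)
The plan is to deduce \autoref{thm:DK-} from \autoref{thm:DK+} and \autoref{cor:adj} by duality, rather than rebuild the Dold--Kan machinery. The walking adjunction has a canonical duality $\Adj^{\op}\simeq\Adj$: it reverses $1$-morphisms and swaps $+$ and $-$. Under this duality the hom-category $\Delta_{+}$ at $+$ becomes $\Delta_{+}^{\op}$ at $-$, and $\Delta_{\infty}^{\op}$ is interchanged with $\Delta_{\infty}$. Consequently the adjunction $\sY(-)$, namely $\Sigma^{\infty}_1(i^{\op})\dashv\Sigma^{\infty}_1(t^{\op})$, is obtained from the description of $\sY(+)$ by passing to opposite categories. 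Concretely, we apply \autoref{prop:TDKLDK} with $\cC$ replaced by $\cC^{\op}$, which is again stable. Since $\Fun(\Delta_{+}^{\op},\cC^{\op})\simeq\Fun(\Delta_{+},\cC)^{\op}$ and $\Fun(\bZ_{\geq 0},\cC^{\op})\simeq\Fun(\bZ_{\leq 0},\cC)^{\op}$, taking opposites of the whole square of \autoref{prop:TDKLDK} produces a commutative square with top row $\Fun(\Delta_{+},\cC)\to\Fun(\bZ_{\leq 0},\cC)$ and left vertical functor $\iota^*$.

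The second step is to identify the right vertical functor after dualization. Under $\bZ_{\geq 0}^{\op}\simeq\bZ_{\leq 0}$, the associated graded $\cof(f_{\bullet-1})$ in $\cC^{\op}$ becomes a fiber $\fib(f_{\bullet})$ in $\cC$, up to reindexing, and the suspension $[-1]$ in $\cC^{\op}$ becomes $[1]$ in $\cC$. Fiber and cofiber of a map differ by exactly one shift, so $\fib(f)[1]\simeq\cof(f)$. Combined with the reindexing, which absorbs the $\langle 1\rangle$, the dualized functor $(\langle 1\rangle\gr_{\geq 0}[-1])^{\op}$ should become $\gr_{\leq 0}$ on the nose, matching the statement. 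I would verify this by a short explicit computation on a filtered object.

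Adjointability then transfers directly. Taking opposites exchanges left and right adjoints, so the \emph{vertically left and right adjointable} statement of \autoref{cor:adj} (2-fold adjointability) becomes vertical right and left adjointability of the new square. The horizontal functors are equivalences, because $\LDK$ and $\DK_{\infty}$ are equivalences by \autoref{thm:DK+}. It remains to read off the adjoints. The left adjoint of $\gr_{\leq 0}$ on $\fSp_{\leq 0}$ is computed directly from the identification of $\gr$ as a cofiber: a map $\triv(Y)[-1]\langle 1\rangle\to X$ corresponds to maps $Y_n\to\cof(f_{n-1})$. This gives $\triv_{\leq 0}[-1]\langle 1\rangle\dashv\gr_{\leq 0}$. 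Finally, setting $\cC=\Sp$ and using $\Sigma^{\infty}_1(\cK)\simeq\Fun(\cK^{\op},\Sp)^{\omega}$ on compact objects (the Dold--Kan equivalences preserve compactness since they are equivalences), we identify this adjunction with $\sY(-)$ from \autoref{def:Y+Y-}.

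The main obstacle is the bookkeeping in the second step. Dualization changes which of $\iota^*$ and $t^*$ is the left adjoint, and the signs of the shifts $[\pm1]$ and $\langle\pm1\rangle$ are easy to get wrong. If the naive identification is off by a shift, I would correct it by post-composing the top equivalence with the autoequivalence $[1]\langle -1\rangle$ of $\Fun(\bZ_{\leq 0},\cC)$; this is harmless because it is an equivalence and commutes with $\gr$ up to the same twist.
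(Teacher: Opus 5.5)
Your proposal is correct and is essentially the paper's proof: apply \autoref{prop:TDKLDK} to $\cC^{\op}$, pass to opposite categories in the whole square, and get adjointability on the side you need from the fact that the horizontal functors are equivalences, exactly as in the proof of \autoref{cor:adj}. (It is this equivalence argument that does the work, not a literal ``left and right'' adjointability in \autoref{cor:adj}, which only asserts $2$-fold left adjointability.) Your shift bookkeeping is also right: writing $Y_{-n}=X_n$, the dualized vertical functor sends $X$ to $\fib_{\cC^{\op}}(X_n\to X_{n+1})\simeq\cof(Y_{-n-1}\to Y_{-n})$, which is exactly $\gr_{\le 0}$, so no correction is needed. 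Your proposed fallback would not have been available anyway, since $\langle\pm1\rangle$ is not an autoequivalence of $\Fun(\bZ_{\le 0},\cC)$.
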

\begin{proof}
    Passing to opposite categories in the diagram of Proposition~\ref{prop:TDKLDK} induces a commutative square
\[
\begin{tikzcd}
    \Fun(\Delta_{+},\cC)\arrow[r,"\simeq"]\arrow[d,"\iota^{\ast}"] & \Fun(\Delta_{+}^{\op},\cC^{\op})^{\op} \arrow[d,"((\iota^{\op}))^{\ast})^{\op}"]\arrow[r,"\LDK^{\op}"] & \Fun(\bZ_{\geq 0},\cC^{\op})^{\op}\arrow[d,"(\langle 1\rangle\gr_{\geq 0}{[-1]})^{\op}"] \arrow[r,"\simeq"] &  \Fun(\bZ_{\leq 0},\cC) \arrow[d,"\gr_{\leq 0}"] \\
\Fun(\Delta_{\infty},\cC) \arrow[r,"\simeq"] & \Fun(\Delta_{\infty}^{\op},\cC^{\op})^{\op} \arrow[r,"\DK_{\infty}"] & \Fun(\bZ_{\geq 0}^{\simeq},\cC^{\op})^{\op}\arrow[r,"\simeq"] & \Fun(\bZ_{\leq 0}^{\simeq},\cC)
\end{tikzcd}
\]        
 that is vertically right adjointable. It follows as in the proof of Corollary~\ref{cor:adj} that it is also vertically left adjointable.

\end{proof}

\bibliographystyle{halpha}
\bibliography{sphreferences.bib}

\end{document}